\def\NN{}
\numberwithin{equation}{section}
\newcommand{\cM}{{\mathcal M}}
\newcommand{\cY}{{\mathcal Y}}
\newcommand{\E}{{\mathbb E}}
\newcommand{\N}{{\mathbb N}}
\renewcommand{\P}{{\mathbb P}}
\newcommand{\R}{{\mathbb R}}
\newcommand{\Z}{{\mathbb Z}}
\newcommand{\Var}{{\rm Var}}
\newcommand{\ind}{\mathds{1}}
\newtheorem{theorem}{Theorem}[section]
\newtheorem{proposition}{Proposition}[section]
\newtheorem{lemma}{Lemma}[section]
\theoremstyle{definition}
\newtheorem{remark}{Remark}[section]
\newtheorem{assumption}{Assumption}[section]
\newcommand{\comment}[1]{}
\def\limn{\lim_{n\to\infty}}
\def\limsupn{\limsup_{n\to\infty}}
\def\mfa{\text{ for all }}
\def\vv#1{{\boldsymbol #1}}
\def\indd#1{{\ind}_{\{#1\}}}
\def\inddd#1{{\ind}_{\left\{#1\right\}}}
\def\nn#1{\left\|#1\right\|}
\def\snn#1{\|#1\|}
\def\inv{^{-1}}
\def\proba{\mathbb P}
\def\esp{\mathbb E}
\def\pp#1{\left(#1\right)}
\def\spp#1{(#1)}
\def\bb#1{\left[#1\right]}
\def\ccbb#1{\left\{#1\right\}}
\def\abs#1{\left|#1\right|}
\def\Rd{{\mathbb R^d}}
\def\Nd{{\mathbb N^d}}
\def\summ#1#2#3{\sum_{#1=#2}^{#3}}
\def\sif#1#2{\sum_{#1=#2}^\infty}
\def\vif#1#2{\bigvee_{#1=#2}^\infty}
\def\prodd#1#2#3{\prod_{#1=#2}^{#3}}
\def\mmid{\;\middle\vert\;}
\def\wt#1{\widetilde{#1}}
\def\what#1{\widehat{#1}}
\def\wb#1{\overline{#1}}
\def\topp#1{^{(#1)}}
\def\qmand{\quad\mbox{ and }\quad}
\def\qmwith{\quad\mbox{ with }\quad}
\def\mwith{\mbox{ with }}
\def\calA{{\mathcal A}}
\def\calB{{\mathcal B}}
\def\calE{{\mathcal E}}
\def\calF{{\mathcal F}}
\def\calG{{\mathcal G}}
\def\calJ{{\mathcal J}}
\def\calK{{\mathcal K}}
\def\calL{{\mathcal L}}
\def\calM{{\mathcal M}}
\def\calN{{\mathcal N}}
\def\calR{{\mathcal R}}
\def\calY{{\mathcal Y}}
\def\eqd{\stackrel{d}=}
\def\weakto{\Rightarrow}
\newcommand{\eqnh}{\begin{eqnarray*}}
\newcommand{\eqne}{\end{eqnarray*}}
\newcommand{\eqnhn}{\begin{eqnarray}}
\newcommand{\eqnen}{\end{eqnarray}}
\newcommand{\equh}{\begin{equation}}
\newcommand{\eque}{\end{equation}}
\def\Cabf{\mathfrak C_{\alpha,\beta}(f)}
\def\Cabfs{\mathfrak C_{\alpha,\beta,*}(f)}
\def\mab{\calM_{\alpha,\beta}}
\def\mabZ{\calM_{\alpha,\beta,Z}}
\def\mabl{\calM_{\alpha,\beta}^{\rm lo}}
\def\ddelta#1{\delta_{\pp{#1}}}
\def\SM{{\rm SM}}
\def\PPP{{\rm PPP}}
\def\supp{{\rm supp}}
\author{Olivier Durieu}
\address{
Olivier Durieu\\
Institut Denis Poisson, UMR-CNRS 7013\\
Universit\'e de Tours, Parc de Grandmont, 37200 Tours, France.
}
\email{olivier.durieu@univ-tours.fr}
\author{Yizao Wang}
\address
{
Yizao Wang\\
Department of Mathematical Sciences\\
University of Cincinnati\\
2815 Commons Way\\
Cincinnati, OH, 45221-0025, USA.
}
\email{yizao.wang@uc.edu}
\keywords{random sup-measure, random closed set, stationary process, point-process convergence, regular variation, long-range dependence}
\subjclass[2010]{Primary, 60G70, %extreme value theory
60F17; %functional limit theorems 
% 60G22; %fractional processes and fBm
  Secondary, 60G57. %random measure
  % 60C05. %combinatorial probability
   }
\title[Phase transition for extremes]{Phase transition for extremes of a stochastic model with long-range dependence and multiplicative noise}
\begin{document}\sloppy
\begin{abstract}
We consider a stochastic process with long-range dependence perturbed by multiplicative noise. The marginal distributions of both the original process and the noise have  regularly-varying tails, with tail indices $\alpha,\alpha'>0$, respectively. The original process is taken as the regularly-varying Karlin model, a recently investigated model that has long-range dependence characterized by a memory parameter $\beta\in(0,1)$. We establish limit theorems for the extremes of the model, and reveal a phase transition. In terms of the limit there are three different regimes: signal-dominance regime $\alpha<\alpha'\beta$, noise-dominance regime $\alpha>\alpha'\beta$, and critical regime $\alpha = \alpha'\beta$. As for the proof, we actually establish the same phase-transition phenomena for the so-called Poisson--Karlin model with multiplicative noise defined on generic metric spaces, and apply a Poissonization method to establish the limit theorems for the one-dimensional case as a consequence.
\end{abstract}
\maketitle
%\tableofcontents
\NN

\section{Introduction}
\subsection{Background}
The motivating example of this paper concerns the following model of stationary stochastic processes
\equh\label{eq:1}
X_i = \sigma_iZ_i, 
\quad i\in\N :=\{1,2,\dots\},
\eque
where $\{\sigma_i\}_{i\in\N}$ is a stationary sequence of random variables and $\{Z_i\}_{i\in\N}$ are i.i.d.~copies of certain random variable $Z$, the two sequences being independent. 
We are mostly interested in characterizing scaling limits for the model 
\eqref{eq:1}
 from a theoretical point of view. It turned out that, despite its simple structure, it may exhibit an intriguing phase transition in terms of the limit of extremes.  
 We refer to the process $\{\sigma_i\}_{i\in\N}$ as the {\em signal process}, and $\{Z_i\}_{i\in\N}$ as the {\em multiplicative noise}  (or the {\em noise process}).

For \eqref{eq:1}, 
we investigate the case that the perturbed process $X = \{X_i\}_{i\in\N}$ has regularly-varying tails with index $\gamma>0$ (i.e.~$\wb F_{X_1}(x):=\proba(X_1>x)\in RV_{-\gamma}$). 
Our first assumption is that both $\sigma_1$ and $Z_1$ have regularly-varying tails, with index $\alpha>0$, $\alpha'>0$, respectively. Indeed, if we consider a single random variable $X_1 = \sigma_1Z_1$, it is a well-known result due to \citet{breiman65some} (see \citep{jessen06regularly} for more references) that then $X_1$ has a regularly-varying tail with the dominant index of the two ($\gamma = \min\{\alpha,\alpha'\}$). (Strictly speaking, when say the tail of $\sigma_1$ dominates, we do not need to assume $Z_1$ to have a regularly-varying tail for $X_1$, but simply that $\esp Z_1^{\alpha+\epsilon}<\infty$. For the sake of simplicity we restrict our discussions here to the regularly-varying tails, while our main results later are proved under more general assumptions.)

Second, at the process level, we are interested in the case that the signal process $\{\sigma_i\}_{i\in\N}$ is with long-range dependence, and we take the recently introduced heavy-tailed  (regularly varying) {\em Karlin model} \citep{durieu20infinite,durieu18family} 
for $\sigma$. 
 Heuristically, there is a memory parameter $\beta\in(0,1)$ in the models of our interest, and by long-range dependence we mean that 
 the scaling limit for extremes of $\sigma$ is of abnormal order, depending on $\beta$, compared to a sequence of i.i.d.~random variables of the same marginal. Moreover, certain {\em long-range clustering} of extremes appear in the limit \citep{samorodnitsky16stochastic}.
This is in stark contrast to most time-series models investigated so far in the literature, which exhibit possibly 
{\em local clustering} of extremes 
(a.k.a.~extremal clustering in the literature)
in the limit.
Local clustering is a feature of microscopic behaviors: it is usually quantified by the {\em extremal index} taking values from $(0,1]$, interpreted as the reciprocal of the expected size of the extreme cluster (index equal to one meaning no clustering), and more precisely described by limit theorems for multivariate regular variations and tail processes. Extremes with local clustering have been extensively investigated in extreme-value theory (e.g.~\citep{aldous89probability,leadbetter83extremes} on early developments and~\citep{basrak18invariance,dombry18tail,janssen19spectral,basrak09regularly} for recent advances).  Long-range clustering, on the other hand, is a feature of macroscopic behaviors 
 (or, it could be thought of local clustering with unbounded size),
%characterized by limit theorems for empirical random sup-measures, 
and very few examples have been worked out (e.g.~\citep{durieu18family,lacaux16time,samorodnitsky19extremal}).

Now we describe the heavy-tailed (power-law) {\em Karlin model} \citep{karlin67central,durieu20infinite} 
for the signal process $\sigma$. The fact that this model exhibits long-range clustering of extremes was demonstrated in \citep{durieu18family}. 
Consider a probability measure on $\N$ with mass function $\{p_\ell\}_{\ell\in\N}$, and for the sake of simplicity assume $p_\ell\sim C\ell^{-1/\beta}$ where $\beta\in(0,1)$ is the memory parameter (see \eqref{eq:nu} below for the exact assumption). Let $\{Y_i\}_{i\in\N}$ be i.i.d.~sampling from $\N$ according to $\proba(Y_i = \ell) = p_\ell$. Let $\{\varepsilon_\ell\}_{\ell\in\N}$ be i.i.d.~non-negative random variables with $\wb F_\varepsilon(x)\in RV_{-\alpha}$, independent from $\{Y_i\}_{i\in\N}$, and set the Karlin model as
\[
\sigma_i:=\varepsilon_{Y_i},
\quad  i\in\N.
\]
A standard way to investigate limit theorems for extremes is to establish the corresponding point-process convergence. 
In \citep{durieu18family}, we 
considered
 for some regularly varying sequence $\{a_n\}_{n\in\N}$ with index $\beta/\alpha$,
\[
\xi_n\topp\sigma:= \summ i1n \ddelta{\sigma_i/a_n,i/n},n\in\N,
\]
and %we prove
proved
 that
\equh\label{eq:DW18}
\xi_n\topp\sigma\weakto \xi_{\alpha,\beta}:=\sif\ell1\summ i1{Q_{\beta,\ell}} \ddelta{\Gamma_\ell^{-1/\alpha},U_{\ell,i}},
\eque
in the space of $\mathfrak M_p((0,\infty]\times[0,1])$ (the space of Radon point measures on $(0,\infty]\times[0,1]$), where in the limit 
 $\{\Gamma_\ell\}_{\ell\in\N}$ are consecutive arrival times of a standard Poisson process, 
 $\{Q_{\beta,\ell}\}_{\ell\in\N}$ are i.i.d.~copies of Sibuya random variables with parameter $\beta$, taking values in $\N$ (see \eqref{eq:Sibuya} below),  $\{U_{\ell,i}\}_{\ell,i\in\N}$ are i.i.d.~uniform random variables from $[0,1]$, and all families are independent.

An interpretation of \eqref{eq:DW18} is as follows. First, at the boundary case $\beta = 1$ ($Q_\beta\weakto 1$ as $\beta\uparrow 1$), the limit in \eqref{eq:DW18}  corresponds to the well-known situation where the limit extremes are {\em independently scattered}, a phenomenon arising from a sequence of i.i.d.~random variables. Namely, in this case
\[
\xi_{\alpha,1} = \sif\ell1\ddelta{\Gamma_\ell^{-1/\alpha},U_\ell},
\]
%\equh\label{eq:isRSM}
%\calM_{\alpha}^{\rm is}(\cdot) \equiv \calM_{\alpha,1}(\cdot) \eqd \sup_{\ell\in\N}\frac1{\Gamma_\ell^{1/\alpha}}\inddd{U_{\ell}\in\cdot},
%\eque
where $1/\Gamma_\ell^{1/\alpha}$ represents the $\ell$-th largest order statistic and 
$U_{\ell}:=U_{\ell,1}$ its location.
For the general limit point process $\xi_{\alpha,\beta}$ in \eqref{eq:DW18} with $\beta\in(0,1)$, the $\ell$-th order statistic is again represented by $1/\Gamma_\ell^{1/\alpha}$, but it appears at multiple non-local locations (whence the notion of long-range clustering) represented by $\{U_{\ell,i}\}_{i=1,\dots,Q_{\beta,\ell}}$ (notice that  $Q_{\beta,\ell}$ has regularly-varying tail with index $\beta$). 

\subsection{Main result}
We are interested in establishing corresponding limit theorems as in \eqref{eq:DW18} for the 
 Karlin model (memory parameter $\beta\in(0,1)$ and
 regularly-varying
  tail index $\alpha>0$) with multiplicative noise $\{Z_i\}_{i\in\N}$ (non-negative
 with $\wb F_Z(x)\in RV_{-\alpha'}$). It turns out that there are three different regimes in terms of the limit the point process
\[
\xi_n := \summ i1n \ddelta{\varepsilon_{Y_i}Z_i/r_n, i/n},
\]
for some appropriately chosen 
sequence $\{r_n\}_{n\in\N}$, determined by the three parameters.  
For illustration purpose we 
give a simplified statement of the phase transition, assuming Pareto distributions for $Z$ and $\varepsilon$. These assumptions can be significantly relaxed, as proved in Theorem \ref{thm:KSV} later.
Throughout we write $f_n\sim g_n$ as $n\to\infty$ if $\limn f_n/g_n = 1$. 
\begin{theorem}\label{thm:0}
Assume that $\wb F_\varepsilon(x) = x^{-\alpha}, \wb F_Z(x) = x^{-\alpha'}, x\ge 1$, and $p_\ell \sim 
\ell^{-1/\beta}$,
where $\alpha,\alpha'>0$ and $\beta\in(0,1)$. Then, 
\[
\xi_n \weakto \begin{cases}
\sif\ell1 \ddelta{\varepsilon_{Y_\ell}\Gamma_\ell^{-1/\alpha'},U_\ell}, & \text{if }\alpha>\alpha'\beta \mbox{ (noise dominance), with } 
 r_n\sim n^{1/\alpha'},
\\
\\
\sif \ell1\summ i1{Q_{\beta,\ell}}\ddelta{\Gamma_\ell^{-1/\alpha}Z_{\ell,i},U_{\ell,i}}, & \text{if }\alpha<\alpha'\beta\mbox{ (signal dominance), with } r_n\sim c_1n^{\beta/\alpha} , \\\\
\sif\ell1\ddelta{S_\beta^{1/\alpha'}\Gamma_\ell^{-1/\alpha'},U_\ell}  & 
\text{if }\alpha = \alpha'\beta \mbox{ (critical), with }
r_n\sim c_1\pp{n^{\beta}\log (n^\beta)}^{1/\alpha},
\\
\end{cases}
\]
in $\mathfrak M_p((0,\infty]\times[0,1])$, with $c_1 = \Gamma(1-\beta)^{1/\alpha}$, 
$\{\varepsilon_\ell\}_{\ell\in\N}$, $\{Y_\ell\}_{\ell\in\N}$, $\{\Gamma_\ell\}_{\ell\in\N}$, $\{U_\ell\}_{\ell\in\N}$, $\{U_{\ell,i}\}_{\ell\in\N}$, $\{Q_{\beta,\ell}\}_{\ell\in\N}$ independent families defined as before, and $\{Z_{\ell,i}\}_{\ell,i\in\N}$ 
i.i.d.~copies of $Z$ and $S_\beta$ a totally skewed $\beta$-stable random variable, both independent of the preceding families. 
\end{theorem}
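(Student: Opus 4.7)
The plan is to follow the road map announced in the abstract: establish the analogous three-regime point-process convergence for the Poisson-Karlin model with multiplicative noise (in which the species counts become independent Poisson random variables), and then de-Poissonize. Decomposing
\[
\xi_n = \sif\ell1\xi_n\topp\ell, \qquad \xi_n\topp\ell := \summ i1n \inddd{Y_i=\ell}\,\ddelta{\varepsilon_\ell Z_i/r_n,\,i/n},
\]
one sees that each $\xi_n\topp\ell$ is, conditional on the species count $K_{n,\ell}=\summ i1n \inddd{Y_i=\ell}$ and on the positions $\{i/n: Y_i=\ell\}$, a sum of $K_{n,\ell}$ i.i.d.\ atoms whose first coordinates are $\varepsilon_\ell Z_{\ell,j}/r_n$. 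Replacing the sample size $n$ by an independent $\mathrm{Poisson}(n)$ random variable makes the $K_{n,\ell}$'s independent $\mathrm{Poisson}(np_\ell)$'s and the within-species positions i.i.d.\ uniform on $[0,1]$, so that the Laplace functional of the Poissonized point process $\xi_n^{\rm P}$ factorizes across $\ell$.

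The core step is the asymptotic evaluation of this factorized Laplace functional, which I would carry out by splitting the species into \emph{bulk} ones (with $np_\ell$ large, hence $K_{n,\ell}$ macroscopic) and \emph{rare} ones (with $np_\ell$ bounded). A classical Karlin-type estimate asserts that for $\ell\sim cn^\beta$ the conditional-on-positive distribution of $K_{n,\ell}$ converges to the Sibuya law $Q_\beta$, while for smaller $\ell$ one has $K_{n,\ell}/(np_\ell)\to 1$. In the \emph{signal regime} $\alpha<\alpha'\beta$, the scale $r_n\sim c_1 n^{\beta/\alpha}$ is chosen so that the rank-ordered largest marks satisfy $\varepsilon_\ell/r_n\asymp\Gamma_\ell^{-1/\alpha}$, coming from rare species with Sibuya-cluster counts; each such extreme appears $Q_{\beta,\ell}$ times, with the $i$-th atom in the cluster carrying an independent noise factor $Z_{\ell,i}$ as a bulk multiplicative mark and an independent uniform position $U_{\ell,i}$. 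In the \emph{noise regime} $\alpha>\alpha'\beta$, the scale $r_n\sim n^{1/\alpha'}$ makes the extreme values come from the largest $Z_i$'s sitting at generic positions, where $\sigma_i=\varepsilon_{Y_i}$ is a bulk multiplicative mark distributed as the marginal of $\sigma_1$; a Breiman-type argument justifies preservation of the noise tail since $\esp\varepsilon^{\alpha'}<\infty$ throughout this regime. In the \emph{critical regime} $\alpha=\alpha'\beta$, the two mechanisms balance and the $\beta$-stable factor $S_\beta^{1/\alpha'}$ emerges as the limit of the normalized sum $\sif\ell1 \varepsilon_\ell^{\alpha'} K_{n,\ell}$, whose summands have (conditionally on $\varepsilon_\ell$) effective tail index~$1$ with slowly varying correction that exactly matches the $\log(n^\beta)$ factor in $r_n^{\alpha'}$.

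Finally, de-Poissonization proceeds by comparing $\xi_n$ with the Poissonized $\xi_{N(n)}^{\rm P}$ via a standard coupling argument, using $|N(n)-n|=O_\proba(\sqrt n)$ and the fact that on any compact set of the form $(\delta,\infty]\times[0,1]$ both processes have only finitely many atoms whose symmetric difference has vanishing intensity. The main obstacle is the critical regime: identifying precisely the $\beta$-stable law $S_\beta$ and coupling its fluctuations to the Poisson structure of the largest noise values $Z_i/r_n$ is considerably more delicate than the clean regime-separation arguments that suffice in the signal and noise cases, since here the ``bulk'' and ``rare'' species contribute at the same logarithmic scale and must be tracked jointly.
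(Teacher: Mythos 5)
Your overall architecture --- Poissonize so that the species counts $K_{n,\ell}$ become independent Poisson variables, factorize the Laplace functional over $\ell$, treat the three regimes separately, then de-Poissonize by a sandwich coupling --- is exactly the route the paper takes, and your descriptions of the signal-dominance mechanism (top $\varepsilon_\ell$'s among the occupied species, Sibuya cluster sizes, independent marks $Z_{\ell,i}$) and of the de-Poissonization are sound in outline. But there are two genuine problems. First, in the noise regime you justify the Breiman step by asserting that $\esp\varepsilon^{\alpha'}<\infty$ ``throughout this regime''; this is false. The regime is $\alpha>\alpha'\beta$, which for small $\beta$ is compatible with $\alpha'\ge\alpha$, in which case $\esp\varepsilon^{\alpha'}=\infty$ under the Pareto assumption of the theorem. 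The condition that actually makes Breiman's lemma applicable is the almost-sure finiteness of the \emph{conditional} moment $\esp_{\vv\varepsilon}\varepsilon_Y^{\alpha'+\epsilon}=\sum_\ell p_\ell\varepsilon_\ell^{\alpha'+\epsilon}$, which holds precisely because the sampling weights $p_\ell\sim\ell^{-1/\beta}$ decay fast enough relative to the tail index of $\varepsilon^{\alpha'+\epsilon}$ (a three-series argument, Lemma~3.2 of the paper); this is exactly where $\beta$ enters and why the phase boundary sits at $\alpha=\alpha'\beta$ rather than at $\alpha=\alpha'$. As written, your justification would locate the boundary in the wrong place.

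Second, the critical regime --- the heart of the theorem --- is not proved; you give a heuristic (the emergence of $S_\beta$ from the normalized sum $\sum_\ell\varepsilon_\ell^{\alpha'}K_{n,\ell}$, which is indeed the right picture) and then explicitly defer the difficulty. The missing content is substantial. The paper writes $\esp e^{-\eta_n(f)}=\esp\exp\pp{\sum_k J_{n,k}\log\psi_{n,k}}$ with $\psi_{n,k}=\esp\exp\pp{-\sum_{i\le k}f(\varepsilon Z_i/b_n)}$ and passes through a chain of approximations (replacing $J_{n,k}$ by $\esp J_{n,k}$ and then by $\Gamma(1-\beta)\nu(n)p_k^{(\beta)}$, and $-\log\psi_{n,k}$ by $1-\psi_{n,k}$) that must be controlled \emph{uniformly in $k$} up to $k\approx N(n)$; the key technical input is the uniform bound $1-\psi_{n,k}\le C\spp{k^\beta\wb F_\varepsilon^*(\wt b_n)+k\wb F_Z(b_n)}$, which requires the density-regularity hypotheses of Assumption~3.1 and a delicate splitting of the integral defining $\psi_{n,k}$. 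Nothing in your ``bulk vs.\ rare'' dichotomy substitutes for this, and the standard Kallenberg criterion cannot short-circuit the computation because $\esp\eta([x,\infty])=x^{-\alpha'}\esp S_\beta=\infty$ in the limit. Finally, your remark about coupling the stable fluctuations ``to the Poisson structure of the largest noise values $Z_i/r_n$'' points in the wrong direction: as the paper emphasizes, neither the top order statistics of $Z$ nor those of $\varepsilon$ contribute to the critical limit; the extremes come from moderately large values of both factors occurring simultaneously.
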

We now comment briefly on the three different regimes.
\begin{enumerate}[(i)]
\item The {\em noise-dominance regime} corresponds to the case 
when  the extremes of the perturbed process are caused by the extremes of the multiplicative noise, each multiplied by the corresponding variable from the signal process. 
  This is the easiest case, once one realizes that conditionally on $\vv\varepsilon := \sigma(\{\varepsilon_\ell\}_{\ell\in\N})$, the random variables $\{\varepsilon_{Y_i}\}_{i\in\N}$ are i.i.d. 
Then one immediately sees that, by Breiman's Lemma the above holds under the assumption $\esp(\varepsilon_Y^{\alpha'+\epsilon}\mid\vv\varepsilon)<\infty$ almost surely for some $\epsilon>0$, of which a sufficient condition is $\alpha'\beta<\alpha$.

\item The {\em signal-dominance regime} corresponds to the case when the extremes of the perturbed process are  caused by the extremes of the original signal process, each multiplied by an independent copy of $Z$. 
 Namely, each extreme value of the signal process, say $\Gamma_\ell^{-1/\alpha}$ at location $U_{\ell,i}$, is multiplied by an independent copy $Z_{\ell,i}$. 
To ensure that the limit in this case is Radon (with finite points over $[x,\infty]\times[0,1]$) one needs $\esp(\max_{i=1,\dots,Q_\beta}Z_i^\alpha)<\infty$, a sufficient condition of which is then $\alpha'\beta>\alpha$. 
 Note that with $Z_{\ell,i} \equiv 1$ this becomes the extremal limit theorem in \citep{durieu18family}, and the limit becomes the same as in  \eqref{eq:DW18}.

\item 
In the {\em critical regime},  the limit point process is connected to a known object in extreme value theory (the $\alpha$-Fr\'echet {\em logistic random sup-measures}) which we shall recall later in Remark \ref{rem:logistic}.  
\end{enumerate}
It is remarkable that, while the tail of the limit is determined by the dominant tail of the signal and noise processes,  the memory parameter $\beta$ plays a role in the limit in all three regimes 
(although the influence in the noise-dominance regime is the least and indirectly via the law of $Y$).

We shall also establish the main results in a slightly more general setup with another layer of randomness.
Our model presented here is of  exchangeable nature, hence the phase transition naturally exists for a more general model referred to as the {\em Poisson--Karlin model}, defined on a generic metric space instead of $\R_+$. (The sum-stable counterpart of a variation of Poisson--Karlin model and limit theorems on a generic metric space have been recently investigated in \citep{fu20stable}.)
The Poisson--Karlin model, and essentially the {\em Poissonization technique}, are needed in our earlier work \citep{durieu18family} as in most analysis of the Karlin models since \citep{karlin67central} (see also \citep{gnedin07notes}). 
At the end, applying a new Poissonization method that we developed to the Poisson--Karlin model, we establish  Theorem \ref{thm:0} (and Theorem \ref{thm:KSV}) as a corollary.
 
As for the proofs, the proof for the noise-dominance regime follows from Breiman's Lemma \citep{breiman65some}, where the lighter-tailed random variables are now not i.i.d.~but only so conditionally. The proof for the signal-dominance regime follows from  an adaption of Breiman's Lemma: here, the i.i.d.~random variables with dominant tails are replaced by a sequence of stationary random variables, always with dominant tails. Our adaption follows closely the proof of \citep[Proposition 7.5]{resnick07heavy}, 
and
 it may be of independent interest. 
The most significant contribution of the paper is the limit theorem for the critical regime, where the proof is much more involved. In particular, the phenomenon in this regime cannot be explained by  well-known heuristics related to Breiman's Lemma concerning products of power-law random variables: the top order statistics of both the signal process and the noise actually do not contribute in the limit.

At last, we shall represent the limit theorems in terms of both point-process convergence and 
random-sup-measure
 convergence. The latter is a notion advocated by \citet{obrien90stationary,vervaat97random} for the investigations of extremes of stationary sequences, and in particular when certain dependence structure is preserved in the limit. 
See Section~2.1 for background. 
Working with random sup-measures  helps us to compare the results here with a few other recent developments of extremal limit theorems for {\em long-range clustering} \citep{lacaux16time,samorodnitsky19extremal} where the extremal limit theorems cannot be effectively characterized by point-process convergence alone. 
For the extremes of Karlin model and the one presented here, it is only a matter of convenience for us  whether to state the results in terms of point processes or random sup-measures. We choose to do both.
\subsection{Connections to stochastic volatility model}
There are several models in the form of \eqref{eq:1} in the recent literature, also known as the stochastic volatility models. In this context, $\{\sigma_i\}_{i\in\N}$ is known as the volatility process and $\{Z_i\}_{i\in\N}$ as the innovation process. It is interesting to compare the probabilistic properties of the perturbed Karlin model with those in the literature. At the same time, the perturbed Karlin model may not be the most appropriate to fit financial data.  We conclude the introduction with a couple remarks from these aspects. 
\begin{remark}
For earlier theoretical developments of stochastic volatility models, see \citep[Part II]{andersen09handbook} and in particular the two contributions by Davis and Mikosch. 
Most of the limit theorems regarding stochastic volatility models investigate the case that the extremes are asymptotically independent: the situations are
  nevertheless quite delicate, and could be elaborated further by either modeling the the so-called {\em coefficient of tail dependence} \citep{janssen16stochastic}, or establishing   conditional extreme-value distributions
  for the tail process \citep{kulik15heavy}. 
The easiest way to achieve asymptotic independence is to let the innovation process have the dominant tails. The only two references that we found where the volatility process has the dominant tail are \citet{janssen16stochastic} and \citet{mikosch13stochastic}. 
The former has no extremal clustering as mentioned above, and the second reference demonstrated, via several models, how local clustering may be inherited from the volatility process.  As for volatility processes with long-range dependence, only a few notable references appeared recently; either they belong to the case that the innovation processes have the dominant tails \citep{kulik11tail,kulik13estimation}, or only their functional central limit theorems, no extremes, were studied \citep{kulik12limit}. We are unaware of any models that exhibit the similar long-range clustering or the phase transition as ours. 
\end{remark}

\begin{remark}
The perturbed Karlin model may not be the best to fit financial time series data, and hence we choose not to call it a stochastic volatility model. The empirical evidence of the power-law tails of 
some financial data
 has been well known. 
However, it has been extensively discussed in the literature that for such datasets extremes 
 occur typically in local
clusters (corresponding to extremal index in $(0,1)$)  \citep{basrak02regular}, and yet it can be argued that it is appropriate to apply stochastic volatility models with asymptotic tail-independence (no clustering) for modeling such datasets \citep{drees15statistics,janssen16stochastic}. From this point of view, the feature of long-range clustering 
makes the model not the most appealing.
Nevertheless, in view of the recent result that (a variation of) the Karlin model serves as the counterpart of fractional Brownian motion as the simple random walk to the standard Brownian motion \citep{durieu16infinite}, it is yet to see whether the perturbed Karlin model may find applications in other applied areas. 

Some numerical simulations
for the pertubed Karlin model
 are provided in Figure \ref{fig:1}. 
 Notice that the simulation is misleading about what happens in the critical regime: the simulation seems to suggest that the top statistics of both the signal and the noise may contribute, but in fact neither does in the limit. This phenomenon  would probably require a very large $n$ to be noticeable in simulations.
 %\textcolor{blue}{OLI0411: You really can have  something more "representative" here ? It is strange to say that we give the figure but it is not representative ... but if it is like that then okay.} \textcolor{red}{YZ0411: I think, the reason that it is hard to notice is because of the $\log n$, which will become effective only for $n$ very very large. I also revised the text above.}
 Some simulations for the limit logistic random sup-measures at the critical regime are provided in Figure \ref{fig:loRSM}.
\end{remark}
\begin{figure}
\begin{center}
\includegraphics[width = \textwidth]{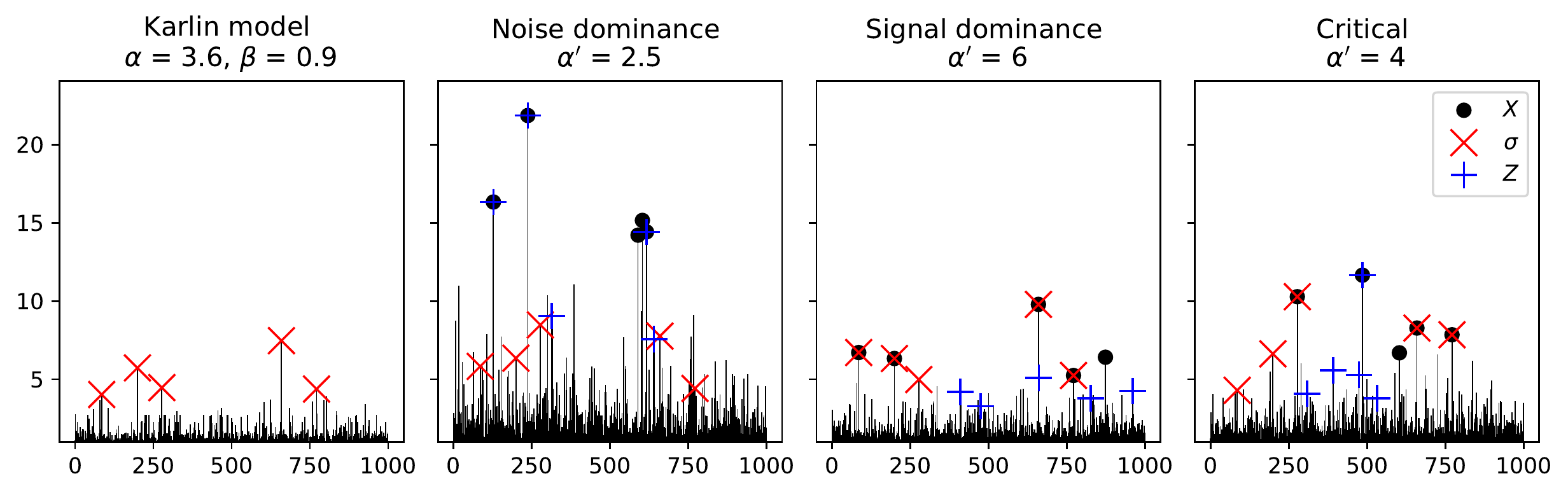}
\end{center}
\caption{Simulations for perturbed Karlin models in three different regimes. 
All simulations are based on the same sampling of Karlin model as the signal process (left plot),
with locations of top 5 values from the model ($X$), the signal process ($\sigma$)
and the noise process ($Z$) marked, respectively. 
}\label{fig:1}
%\textcolor{blue}{OLI0411: We should change in the figure Karlin RSM to Karlin Model and use noise-dominance and signal-dominance.} \textcolor{red}{YZ0411: will do.}
\end{figure}
\begin{figure}[ht!]
\begin{center}
\includegraphics[width = \textwidth]{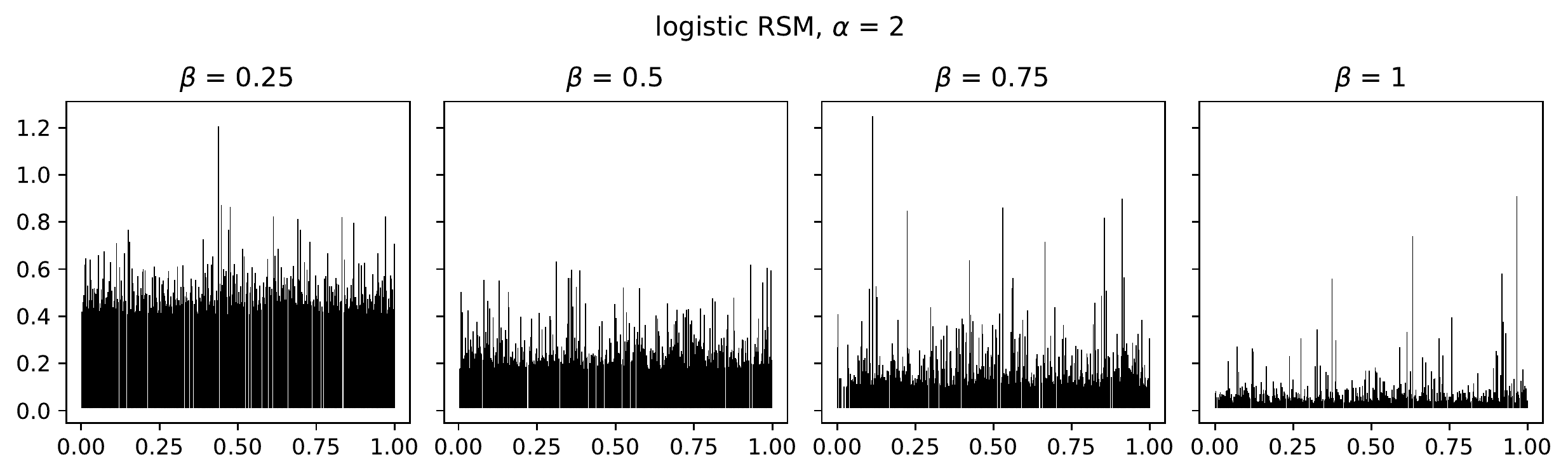}
\end{center} 
\caption{Logistic $\alpha$-Fr\'echet random sup-measures with parameter $\beta\in(0,1)$.  
With $\beta=1$ it becomes $\calM^{\rm is}_\alpha$.}\label{fig:loRSM}
\end{figure}

The paper is organized as follows. In Section \ref{sec:PK} we review the Karlin random sup-measures, the Poisson--Karlin model, and prove an extremal limit theorem for the Poisson--Karlin model. In Section \ref{sec:phase} we state and prove our main theorems regarding the phase-transitions for Poisson--Karlin models with multiplicative noises. In Section \ref{sec:KSV} we prove the corresponding limit theorems for one-dimensional discrete-time model  by a coupling method. 
\NN
\section{Karlin random sup-measures and Poisson--Karlin model}\label{sec:PK}
We review the Karlin random sup-measure and the Poisson--Karlin model, and prove that the empirical random sup-measures of the latter scale to the former, extending our earlier result in \citep{durieu18family}.
\subsection{Background on random sup-measures}
Standard references on random sup-measures and random closed sets are \citep{obrien90stationary,vervaat97random,molchanov16max,molchanov17theory}. 
There is also a recent emerging trend of establishing limit theorems for random sup-measures (e.g.~\citep{lacaux16time,chen20extreme}). 
We only recall a few facts.  

We restrict to sup-measures on a 
metric space $E$
taking values in $[0,\infty]$, and let $\SM(E)$ denote the space of all such sup-measures. 
A sup-mesure $m\in\SM(E)$ 
can be defined as a  
set-function with values in $[0,\infty]$, 
 that is uniquely determined by its evaluations on open subsets of $E$, and that satisfies
\[
 m\left(\bigcup_{x}{G_x}\right)= \sup_x m(G_x)\quad \text{for any collection }\{G_x\}_x\text{ of open subsets of }E.
\]
The canonical extension from open subsets to any set 
$A\subset E$  is given by $m(A)=\inf m(G)$ where the infimum is taken over all open subset $G$ of $E$ such that $A\subset G$.
The space $\SM(E)$ is endowed with the topology of the sup-vague convergence. We say that the sequence $\{m_n\}_{n\in\N}$
 converges sup-vaguely to $m$ as $n\to\infty$ if
\[
 \limsupn m_n(K)\le m(K) \text{ and } \liminf_{n\to\infty} m_n(G)\ge m(G) \text{ for all compact subset $K$ and open subset $G$ of $E$}. 
\]
It is known that under the assumption that $E$ is locally compact and second Hausdorff countable, 
$\SM(E)$ is separable and compact. Now, a random sup-measure is a random element in $\SM(E)$ with the 
$\sigma$-algebra
 of the sup-vague topology.
Since every $m\in\SM(E)$ is uniquely determined by its evaluations on open sets, when identifying random sup-measures in $\SM(E)$ it suffices to restrict to their evaluations on open sets. 
In particular, when comparing two random sup-measures $\calM_1$ and $\calM_2$ on $E$, we shall write
\[%\equh\label{eq:def}
\calM_1(\cdot)= \calM_2(\cdot)
 \quad \mbox{ in short of }\quad \ccbb{\calM_1(A)}_{A\in\calA} =\ccbb{\calM_2(A)}_{A\in\calA}
\]%\eque
for some collection $\calA$ 
 of Borel subsets of $E$ 
that form a {\em probability-determining class}, and the equalities above in practice shall be either equalities in the almost-sure sense or equalities for finite-dimensional distributions.

For our limit theorems in the space of random sup-measures, we shall write 
\[
%\equh\label{eq:convergence}
M_n\weakto \calM \quad \mbox{ in short of } \quad\ccbb{M_n(A)}_{A\in\calA}\stackrel{f.d.d.}\weakto\ccbb{\calM(A)}_{A\in\calA}
\]%\eque
in $\SM(E)$
for some {\em convergence-determining class} $\calA$ 
of Borel subsets of $E$,
where $\{M_n\}_{n\in\N}$ and $\calM$ are random sup-measures on $E$. Remark that the support of limit random sup-measures $\calM$ in this paper do not have fixed points ($\calM(\{x\}) = 0$ almost surely for all $x\in E$), and in this case the probability-determining and convergence-determining class coincide \citep[Section 12]{vervaat97random}.
Here both of the following are probability/convergence-determining classes \citep[Theorem 12.2]{vervaat97random}: let $D$ be a countable dense set of $E$ and $B(x,r)$ 
denote an open ball in $E$ centered at $x\in E$ with radius $r>0$,
\begin{align*}
\calG_0&:=\ccbb{B(x,r):x\in D, r>0, \wb {B(x,r)}\in\calK},\\
\calK_0&:=\ccbb{\wb {B(x,r)}:x\in D, r>0, \wb {B(x,r)}\in\calK},
\end{align*}
where $\calK \equiv \calK(E)$ is the set of compact subsets of $E$.

Most of our random sup-measures are based on Poisson point processes. By writing
\[
\sum_i \delta_{\xi_i} \sim \PPP(S,\mu),
\]
we mean that $\{\xi_i\}_i$ are measurable enumerations of points from a Poisson point process on $S$ with intensity measure $\mu$. For any real-valued random variable $W$, we write $\wb F_W(x) = \proba(W>x)$. 

\subsection{Karlin random sup-measures}Throughout we fix a locally compact second countable Hausdorff metric space $E$ with
$\calE$ its Borel $\sigma$-algebra,
and a $\sigma$-finite measure $\mu$ on it. Fix $\alpha>0$. A Karlin $\alpha$-Fr\'echet random sup-measure on $(E,\calE)$ with control measure $\mu$ and parameter $\beta\in(0,1]$, denoted by $\mab$ throughout, is a Choquet $\alpha$-Fr\'echet random sup-measure with extremal coefficient functional $\theta(\cdot) = \mu^\beta(\cdot)$
\citep[Definition~3.6 and Theorem~3.7]{molchanov16max}.
As a Choquet $\alpha$-Fr\'echet random sup-measure, the law of $\mab$ is uniquely determined by its marginal law over compact sets 
$K$ of $E$, that is given by 
\[
\proba(\mab(K)\le z) = \exp\pp{-\theta(K)z^{-\alpha}}, %= \exp\pp{-\mu^\beta(K)z^{-\alpha}}, 
\; \mfa K\in\calK, z>0.
\]
For limit theorems, it is more convenient to work with series representations.  
Since we are only concerned with the joint law of $\mab$ evaluated at $A_1,\dots,A_d\in\calA :=\{A\in\calE:\mu(A)<\infty\}$ for finite $d\in\N$, 
without loss of generality we assume $\mu(E)<\infty$. The advantage of working under this assumption is to have the simple series representation in \eqref{eq:Sibuya_rep} below.

Throughout we let $Q_\beta$ denote a Sibuya random variable with parameter $\beta\in(0,1)$, which takes values from $\N$ and has probability mass function 
\citep{sibuya79generalized}
\equh\label{eq:Sibuya}
p_k\topp\beta:=
\P(Q_\beta=k)= \frac{\beta\Gamma(k-\beta)}{\Gamma(1-\beta)\Gamma(k+1)},\quad k\in\N.
\eque
Note that $\proba(Q_\beta=k)\sim (\beta/\Gamma(1-\beta))k^{-1-\beta}$ as $k\to\infty$. Equivalently, it is determined by $\esp z^{Q_\beta} = 1-(1-z)^\beta$ for $|z|<1$. 
Introduce
\equh\label{eq:R_E0}
\calR_{\beta} := \bigcup_{i=1}^{Q_\beta}\{U_{i}\},
\eque
where $\{U_{i}\}_{i\in\N}$ are i.i.d.~random element from $E$ with law $\wb\mu:=\mu(\cdot)/\mu(E)$, independent from the Sibuya random variable $Q_\beta$. Let $P_{\beta}$ denote the law of $\calR_{\beta}$ on $\calF_0(E)$, the space of non-empty closed sets of $E$.
Consider
\[
\sif \ell1 \delta_{(\Gamma_\ell,\calR_{\beta,\ell})}\sim \PPP\pp{\R_+\times\calF_0(E),dxdP_{\beta}},
\]
where as a convention $\{\Gamma_\ell\}_{\ell\in\N}$ are ordered in increasing order and $\{\calR_{\beta,\ell}\}_{\ell\in\N}$ can be viewed as i.i.d.~marks. 
\begin{proposition}%\label{prop:E_0}
Assume that $\mu(E)<\infty$. With the notation above, 
\equh\label{eq:Sibuya_rep}
\calM_{\alpha,\beta}(\cdot)\eqd \mu^{\beta/\alpha}(E)\vif\ell1\frac1{\Gamma_\ell^{1/\alpha}}\inddd{\calR_{\beta,\ell}\cap \cdot\ne\emptyset}, \; \mfa \beta\in(0,1].
\eque
\end{proposition}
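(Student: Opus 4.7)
The plan is to identify the right-hand side of \eqref{eq:Sibuya_rep}, denoted $M$, as an $\alpha$-Fr\'echet Choquet random sup-measure whose extremal coefficient functional on $\calK$ equals $\mu^\beta$, and then invoke the uniqueness of such an object \citep[Theorem~3.7]{molchanov16max} to conclude that $M\eqd\mab$. The max-stable de Haan spectral form is manifest in the statement: setting $f_\ell(\cdot):=\mu^{\beta/\alpha}(E)\inddd{\calR_{\beta,\ell}\cap\cdot\ne\emptyset}$, we have $M=\vif\ell1 f_\ell/\Gamma_\ell^{1/\alpha}$, where each $f_\ell$ is an indicator-type sup-measure taking only the values $0$ and $\mu^{\beta/\alpha}(E)$ and supported on the random closed set $\calR_{\beta,\ell}$. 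Thus every spectral atom carries comonotonic tail dependence across $\calE$, placing $M$ in the Choquet class. The remaining quantitative content is therefore the computation of the extremal coefficient of $M$.

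For fixed $K\in\calK$ and $z>0$, the event $\ccbb{M(K)\le z}$ is equivalent to the requirement that every atom $(\Gamma_\ell,\calR_{\beta,\ell})$ of the marked PPP with $\calR_{\beta,\ell}\cap K\ne\emptyset$ satisfies $\Gamma_\ell\ge\mu^\beta(E)z^{-\alpha}$. The void-probability formula applied to the PPP on $\R_+\times\calF_0(E)$ with product intensity $dx\,dP_\beta$ gives
\[
\proba\pp{M(K)\le z}=\exp\pp{-\mu^\beta(E)z^{-\alpha}\cdot P_\beta(\calR_\beta\cap K\ne\emptyset)}.
\]
Conditioning on $Q_\beta$ and using that the $U_i$'s are i.i.d.~with common law $\wb\mu$ yields $P_\beta(\calR_\beta\cap K=\emptyset)=\esp(1-\wb\mu(K))^{Q_\beta}$; substituting into the Sibuya pgf $\esp z^{Q_\beta}=1-(1-z)^\beta$ produces $1-\wb\mu(K)^\beta$, so $P_\beta(\calR_\beta\cap K\ne\emptyset)=\mu^\beta(K)/\mu^\beta(E)$ and hence $\proba(M(K)\le z)=\exp(-\mu^\beta(K)z^{-\alpha})$. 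This matches the one-dimensional marginal of $\mab$ and identifies the extremal coefficient functional of $M$ on $\calK$ with $\mu^\beta$.

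The main obstacle is upgrading from one-dimensional marginals to equality of the full laws of $M$ and $\mab$ as random sup-measures. The short route is through the uniqueness of Choquet $\alpha$-Fr\'echet random sup-measures with a prescribed extremal coefficient functional. For a self-contained verification one repeats the void-probability argument for finitely many compacts $K_1,\dots,K_d$ and heights $z_1,\dots,z_d>0$: the bad region in $\R_+\times\calF_0(E)$ is $\ccbb{(x,R):x<\mu^\beta(E)\max_{j:R\cap K_j\ne\emptyset}z_j^{-\alpha}}$, of intensity $\mu^\beta(E)\esp\max_{j:\calR_\beta\cap K_j\ne\emptyset}z_j^{-\alpha}$ by Fubini, so that
\[
\proba\pp{M(K_j)\le z_j,\,j=1,\dots,d}=\exp\pp{-\mu^\beta(E)\,\esp\max_{j:\calR_\beta\cap K_j\ne\emptyset}z_j^{-\alpha}}.
\]
An inclusion--exclusion on the indicators $\inddd{\calR_\beta\cap K_j\ne\emptyset}$, together with repeated application of the Sibuya pgf to the resulting sums over $J\subseteq\{1,\dots,d\}$, collapses this expectation into $\mu^\beta$ evaluated on the unions $\bigcup_{j\in J}K_j$, which is precisely the combinatorial form dictated by the Choquet construction with extremal coefficient functional $\mu^\beta$, thereby reproducing the joint law of $\mab$ on a probability-determining class and concluding the proof.
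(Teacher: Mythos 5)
Your proposal is correct and follows essentially the same route as the paper: identify the right-hand side as a Choquet $\alpha$-Fr\'echet random sup-measure, compute its extremal coefficient functional via the Poisson void probability and the Sibuya generating function $\esp z^{Q_\beta}=1-(1-z)^\beta$ to obtain $\mu^\beta(K)$, and conclude by uniqueness. The additional multivariate verification you sketch is sound but not needed once one cites \citep[Theorem 4.4]{molchanov16max} for membership in the Choquet class, which is exactly what the paper does.
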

\begin{proof}
Let $M$ denote the random sup-measure on the right-hand side of \eqref{eq:Sibuya_rep},
which is a Choquet $\alpha$-Fr\'echet random sup-measure \citep[Theorem 4.4]{molchanov16max}. So it suffices to compute the extremal coefficient functional:
\begin{align*}
\proba\pp{M(K)\le z} & = \exp\pp{-\mu^\beta(E)z^{-\alpha}\proba\pp{\pp{\bigcup_{i=1}^{Q_\beta}\{U_{i}\}}\cap K\ne\emptyset}} = \exp\pp{-\mu^\beta(E)z^{-\alpha}\esp\pp{1-\proba(U_{1}\notin K)^{Q_\beta}}}\\
& = \exp\pp{-\mu^\beta(E)z^{-\alpha}\proba(U_{1}\in K)^\beta} = \exp\pp{-\mu^\beta(K)z^{-\alpha}},
\end{align*}
as desired.
\end{proof}
\begin{remark}With $\beta = 1$, it is well known that the Choquet $\alpha$-Fr\'echet random sup-measure with extremal coefficient functional $\mu(\cdot)$ is 
an independently scattered $\alpha$-Fr\'echet random sup-measure on $E$ with control measure $\mu$ 
\citep[Proposition 6.1]{molchanov16max}: it has a representation
\equh\label{eq:isRSM}
\calM_{\alpha}^{\rm is}\equiv \calM_{\alpha,1}(\cdot) \eqd \vif i1 \xi_i\inddd{U_i\in\cdot} \mbox{ in $\SM(E)$ }  \qmwith \sif i1\delta_{(\xi_i,U_i)}\sim\PPP(\R_+\times E,\alpha x^{-\alpha-1}dx d\mu).
\eque
\end{remark}
\begin{remark}%\label{rem:E}
In the case $\mu(E) = \infty$, the 
representation in \eqref{eq:Sibuya_rep} 
with $\calR_\beta$ as in \eqref{eq:R_E0}
is no longer valid. Another series representation is as follows. We first introduce a $\sigma$-finite measure on  $\calF(E)$, the space of closed sets on $E$. Let $\calN\topp r$ be a Poisson point process on $(E,\calE)$ with intensity measure $r\cdot\mu, r>0$. Then its support, denoted by $\supp\calN\topp r$ (closed by definition) is a random closed set, and hence the law of $\calN\topp r$ induces a probability measure on $\calF(E)$, denoted by $\calL_{\mu, r}$ (determined by $\calL_{\mu,r}(\{F\in\calF(E) : F\cap K\ne\emptyset\}) = 1-e^{-\mu(K)r}$, $K\in\calK(E)$). Then, introduce
\[
\sif i1\delta_{(\xi_i, \calR_{\beta,i})}\sim\PPP(\R_+\times E,\alpha x^{-\alpha-1}dxd\mu_\beta) \qmwith \mu_\beta(\cdot) := \frac1{\Gamma(1-\beta)}\int_0^\infty \beta r^{-\beta-1} \calL_{\mu,r}(\cdot)dr.
\]
We shall also consider 
\[
\sif i1 \delta_{(\xi_i,r_i)} \sim \PPP\pp{\R_+\times \R_+, \alpha x^{-\alpha-1}dx\Gamma(1-\beta)\inv \beta r^{-\beta-1}dr},
\]
and, given the above, conditionally independent Poisson point processes $\{\calN\topp{r_i}_i\}_{i\in\N}$ on $(E,\calE)$ with intensity measure 
$r_i\mu$
 respectively. 
With the notations above, 
\equh\label{eq:KRSM}
\mab(\cdot) \eqd \vif i1 \xi_i\inddd{ \calR_{\beta,i}\cap\cdot\ne\emptyset} \eqd \vif i1 \xi_i\inddd{\supp \calN_i\topp{r_i}\cap \cdot\ne\emptyset}, \; \mfa \beta\in(0,1),
\eque
as Choquet $\alpha$-Fr\'echet random sup-measures on 
$E$.
Indeed, 
the expressions in the middle and on the right-hand side of \eqref{eq:KRSM} are Choquet $\alpha$-Fr\'echet random sup-measures \citep[Theorem 4.4]{molchanov16max}. Therefore it suffices to compute the extremal coefficient functionals. Write
 $\calF_K = \{F\in\calF(E):F\cap K\ne\emptyset\}$. Then,
\begin{align*}
\proba\pp{\vif i1 \xi_i\inddd{ \calR_{\beta,i}\cap K\ne\emptyset}\le z} & = \exp\pp{-z^{-\alpha}\mu_\beta(\calF_K)} = \exp\pp{-z^{-\alpha}\frac1{\Gamma(1-\beta)}\int_0^\infty\beta r^{-\beta-1}\pp{1-e^{-\mu(K)r}}dr}\\
& = \exp\pp{-z^{-\alpha}\mu^\beta(K)},
\end{align*}
where the expression after the second equality is the extremal coefficient functional for the right-hand side of \eqref{eq:KRSM}.
\end{remark}
\subsection{Poisson--Karlin model and its scaling limit}
We introduce the Poisson--Karlin model, of which the special case $(E,\calE) = ([0,1],\calB([0,1]))$
is the poissonized version of the model discussed in introduction.
We shall then prove that the empirical random sup-measures of the Poisson--Karlin model converge in distribution to the Karlin random sup-measure.

From now on, we restrict ourselves to the case that
\[
\mu(E) = 1,
\]
and we have seen in this case, 
\equh\label{eq:KRSM0}
\mab(\cdot) \eqd \sup_{\ell\in\N}\frac1{\Gamma_\ell^{1/\alpha}}\inddd{ \calR_{\beta,\ell}\cap\cdot\ne\emptyset}\qmwith \sif \ell1\ddelta{\Gamma_\ell, \calR_{\beta,\ell}}\sim \PPP(\R_+\times \calF_0(E),dxdP_{\calR_\beta}).
\eque

Now for the Poisson--Karlin model, introduce the following families of random variables, and assume all four families are independent.
\begin{itemize}
\item Let $\{U_i\}_{i\in\N}$ be i.i.d.~random elements from $E$ with law $\mu$. 
\item Let $\{\varepsilon_\ell\}_{\ell\in\N}$ be i.i.d.~non-negative random variables satisfying $\wb F_\varepsilon(x)\in RV_{-\alpha}$.
\item Let $\{Y_i\}_{i\in\N}$ be i.i.d.~$\N$-valued random variables so that, with $p_k:= \proba(Y_1 = k), k\in\N$ being non-increasing and $\nu:=\sum_{k\in\N}\delta_{1/p_k}$ satisfying
\equh\label{eq:nu}
 \nu(x):=\nu((0,x])=\max\ccbb{k\in\N: \frac1{p_k} \le x} = x^{\beta}L(x),\quad x>0,\, \beta\in(0,1),
\eque
for a slowly varying function $L$ at infinity. 
\item Let $N(\lambda)$ be another Poisson random variable with mean $\lambda>0$. 
\end{itemize}
Then, by the Poisson--Karlin model we refer to the following point process
\[
 \summ i1{N(\lambda)}\ddelta{\varepsilon_{Y_i},U_i},
\]
and in this paper we are interested in its empirical random sup-measure defined as
\[
M_\lambda(\cdot):=\max_{i:U_{i}\in \cdot}\varepsilon_{Y_i}.
\] 
When considering limit theorems, without loss of generality we examine only $\lambda=n\in\N$. 
The following result generalizes the main result of \citep{durieu18family}. (See \citep{fu20stable} for how the Poisson--Karlin model leads to {\em sum-stable} random fields.)
We let $\mathfrak M_p(S)$ denote the space of Radon point measures on a topological space $S$, 
and
$\calF_0(E)$ is the space of non-empty compact subsets of $E$ equipped with Fell topology.

\begin{theorem}\label{thm:DW18}
Consider the Poisson--Karlin model. Assume that $\wb F_\varepsilon(x)\in RV_{-\alpha}$ with $\alpha>0$, and $\nu$ satisfies \eqref{eq:nu}. Consider 
\[
R_{n,\ell}:=\bigcup_{i=1,\dots,N(n):Y_i = \ell} \{U_{i}\}, \quad \ell \in\N.
\]
Then for $\{a_n\}_{n\in\N}$ satisfying
\[
\limn\Gamma(1-\beta)\nu(n)\wb F_\varepsilon(a_n)=1,
\]
we have (with notations from \eqref{eq:KRSM0})
\equh\label{eq:DW18PP}
\sif\ell1\ddelta{\varepsilon_\ell/a_n,R_{n,\ell}} \weakto \sif \ell1\ddelta{\Gamma_\ell^{-1/\alpha}, \calR_{\beta,\ell}},
\eque
as $n\to\infty$ in $\mathfrak M_p((0,\infty]\times\calF_0(E))$,
and as a consequence, 
\[
\frac1{a_n} M_n\weakto \calM_{\alpha,\beta}
\]
as $n\to\infty$ in $\SM(E)$. 
\end{theorem}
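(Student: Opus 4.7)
The plan is to establish the point-process convergence (\ref{eq:DW18PP}) via the standard null-array Poisson-limit criterion, and then to deduce the random-sup-measure convergence as a continuous functional of the point process. The key structural observation is a Poisson decomposition: conditional on $N(n)$, the pairs $\{(U_i, Y_i)\}_{i \le N(n)}$ form a marked Poisson point process on $E \times \N$ with intensity $n\mu \otimes \sum_\ell p_\ell \delta_\ell$, and splitting by the $\N$-coordinate (Poisson thinning) yields that $\{R_{n,\ell}\}_{\ell \in \N}$ are mutually independent random closed sets, each $R_{n,\ell}$ being the support of an independent Poisson point process on $E$ with intensity $n p_\ell \mu$. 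Combined with the independence of $\{\varepsilon_\ell\}_{\ell \in \N}$, the atoms $\zeta_{n,\ell} := \ddelta{\varepsilon_\ell/a_n, R_{n,\ell}} \ind\{R_{n,\ell} \ne \emptyset\}$ of $\xi_n := \sum_\ell \zeta_{n,\ell}$ are independent in $\ell$; since at most $N(n)$ of them are non-zero, $\xi_n$ is a.s.\ a Radon point measure on $(0,\infty] \times \calF_0(E)$.

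By the null-array Poisson-convergence criterion, it suffices to verify, for $A$ in a convergence-determining class with the limit $\xi$ satisfying $\xi(\partial A) = 0$ a.s., that $\sum_\ell \proba(\zeta_{n,\ell}(A) = 1) \to \esp[\xi(A)]$ and $\max_\ell \proba(\zeta_{n,\ell}(A) = 1) \to 0$. I would use test sets of the form $A = (x, \infty] \times \{F \in \calF_0(E) : F \cap K \ne \emptyset\}$ with $x > 0$ and $K$ taken from the class $\calK_0$ of Section~\ref{sec:PK}. Since $\varepsilon_\ell$ and $R_{n,\ell}$ are independent, the summand factors as $\wb F_\varepsilon(x a_n) \cdot (1 - e^{-n p_\ell \mu(K)})$. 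Applying the classical Karlin identity $\sum_\ell (1 - e^{-c p_\ell}) \sim \Gamma(1-\beta) \nu(c)$ as $c \to \infty$ with $c = n\mu(K)$, combined with the regular variation of $\nu$ of index $\beta$ and the defining relation $\Gamma(1-\beta) \nu(n) \wb F_\varepsilon(a_n) \to 1$, produces $\sum_\ell \proba(\zeta_{n,\ell}(A) = 1) \to x^{-\alpha} \mu^\beta(K)$. This matches $\esp[\xi(A)] = x^{-\alpha} \proba(\calR_\beta \cap K \ne \emptyset) = x^{-\alpha} \mu^\beta(K)$, the last equality following from $\esp z^{Q_\beta} = 1 - (1-z)^\beta$ evaluated at $z = 1 - \mu(K)$. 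The negligibility is immediate as the bound $\proba(\zeta_{n,\ell}(A) = 1) \le \wb F_\varepsilon(x a_n) \to 0$ does not depend on $\ell$.

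For the random-sup-measure conclusion, I observe that for $G \in \calG_0$ one has $M_n(G)/a_n = \sup\{y > 0 : \xi_n((y, \infty] \times \{F : F \cap G \ne \emptyset\}) \ge 1\}$, and from the representation (\ref{eq:KRSM0}) the same identity holds for $\mab(G)$ in terms of $\xi$. Since $G$ is open, $\{F : F \cap G \ne \emptyset\}$ is open in Fell topology, so that on the event that $\xi$ places no atoms on the corresponding boundaries the sup-functional is continuous at $\xi$; the continuous-mapping theorem applied to (\ref{eq:DW18PP}) then yields the finite-dimensional convergence $M_n/a_n \weakto \mab$ along $\calG_0$, hence in $\SM(E)$. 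The main delicate step I anticipate is the topological bookkeeping around the Fell topology on $\calF_0(E)$, that is, identifying the convergence-determining class and checking that its boundaries are $\xi$-null; this reduces, for $K \in \calK_0$, to verifying $\mu(\partial K) = 0$ and observing that $\calR_\beta$ is a.s.\ a finite set of $\mu$-distributed points, whence $\calR_\beta \cap \partial K = \emptyset$ a.s.
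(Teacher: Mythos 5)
Your argument is correct, and it takes a genuinely different route from the paper's. The paper works pathwise with order statistics: it restricts to the occupied labels $\what L_n$, orders $\{\varepsilon_\ell\}_{\ell\in\what L_n}$, truncates to the top $r$ values, uses $K_n/\nu(n)\to\Gamma(1-\beta)$ a.s.\ to obtain the Fr\'echet points $\Gamma_\ell^{-1/\alpha}$, and identifies the accompanying sets by showing that the cluster sizes $|R_{n,\what\ell_{n,j}}|$, which are uniform samples from the occupancy counts $\{J_{n,k}\}$, converge to the Sibuya law. You instead exploit the splitting of the Poisson process $\sum_{i\le N(n)}\delta_{(U_i,Y_i)}$ into independent components indexed by $\ell$, so that $\xi_n$ is a superposition of independent single-atom processes, and invoke the Grigelionis null-array criterion; everything then reduces to the first-moment asymptotic $\sum_\ell(1-e^{-n\mu(K)p_\ell})\sim\Gamma(1-\beta)\mu^\beta(K)\nu(n)$, matched against the extremal coefficient $x^{-\alpha}\mu^\beta(K)$ of the limit. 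Your route is shorter and avoids the truncation argument, the asymptotic independence of the sampled cluster sizes, and any explicit appearance of $Q_\beta$ on the pre-limit side (Sibuya enters only through $P_\beta(\calF_K)=\mu^\beta(K)$, already computed in the paper's Proposition 2.1); the paper's route, by contrast, makes the Sibuya mechanism visible and produces the occupancy statistics $K_n$, $J_{n,k}$ that are reused heavily in the critical-regime proof. Two points to tighten: (a) the independence of $\{R_{n,\ell}\}_\ell$ comes from the \emph{unconditional} Poisson structure of the marked process (conditionally on $N(n)$ it is a binomial process, not Poisson with intensity $n\mu\otimes\sum_\ell p_\ell\delta_\ell$), though the conclusion you draw is correct; and (b) the null-array criterion requires the first-moment convergence on a dissecting semiring of $(0,\infty]\times\calF_0(E)$, not just on the single hitting sets $(x,\infty]\times\{F:F\cap K\ne\emptyset\}$ --- you need finite intersections of hit-and-miss sets, for which the same Karlin asymptotic applies via inclusion--exclusion, e.g.\ $\sum_\ell(e^{-np_\ell\mu(K_2)}-e^{-np_\ell\mu(K_1\cup K_2)})\sim\Gamma(1-\beta)\nu(n)\bigl(\mu^\beta(K_1\cup K_2)-\mu^\beta(K_2)\bigr)$, matching $P_\beta(F\cap K_1\ne\emptyset,\,F\cap K_2=\emptyset)$.
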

\begin{remark}%\label{rem:PPP}
It is known that the convergence of the point processes \eqref{eq:DW18PP} implies the convergence of the corresponding random sup-measures. See \citep[Theorem 4.2]{durieu18family}. So for all our limit theorems we only establish the point-process convergence. 
\end{remark}
\begin{proof}[Sketch of the proof]
The proof is essentially the same as in \citep[Theorem 4.1]{durieu18family}, where only
the case $(E,\calE) = ([0,1],\calB([0,1]))$ was considered. We only sketch the key steps shedding light on how the Sibuya distribution appears in the limit. 
We first 
introduce the following statistics:
\equh\label{eq:statistics}
K_{n,\ell}:= \summ i1{N(n)}\inddd{Y_i = \ell}, \quad K_n :=\sif \ell1 \inddd{K_{n,\ell}>0}, \quad
J_{n,k}:=\sif \ell1 \inddd{K_{n,\ell} = k}.
\eque
Note that the left-hand side of \eqref{eq:DW18PP} is restricted to $\mathfrak M_p((0,\infty]\times\calF_0(E))$, so the points corresponding to those $\ell$ such that $R_{n,\ell} = \emptyset$ are not involved. Let $\what L_n$ denote the collection of all 
other
 $\ell$. So $|\what L_n| = K_n$.
 Then we rewrite the left-hand side of \eqref{eq:DW18PP} as
\[
\sum_{\ell\in\what L_n}\ddelta{\varepsilon_\ell/a_n,R_{n,\ell}}.
\]
Next, we order $\{\varepsilon_\ell\}_{\ell\in\what L_n}$ in decreasing order, and assume that there are no ties for the sake of simplicity. Let $\{\what \ell_{n,1},\dots,
\what \ell_{n,K_n}\}= \what L_n$ be the corresponding relabellings such that the reordering is
\[
\varepsilon_{\what \ell_{n,1}}>\cdots>\varepsilon_{\what \ell_{n,
K_n}}.
\]
It is a standard argument to focus first on say the top $r$ largest $\varepsilon$, and then let $r\to\infty$ eventually. We only elaborate the first part here, and fix $r\in\N$. The goal is then to show that
\[%\equh\label{eq:top_r}
\summ i1r \ddelta{\varepsilon_{\what \ell_{n,i}}/a_n,R_{n,\what\ell_{n,i}}}\weakto \summ i1r \ddelta{\Gamma_\ell^{-1/\alpha}, \calR_{\beta,\ell}}.
\]%\eque
To see the above holds, we first recall that  \citep{karlin67central,gnedin07notes}
\[
\limn \frac{K_n}{\nu(n)} = \Gamma(1-\beta) \mbox{ almost surely.}
\]
So, since $K_n\wb F_\varepsilon(a_n)\sim1$ almost surely, we have
\[
\frac1{a_n}\pp{\varepsilon_{\what\ell_{n,1}},\dots,\varepsilon_{\what \ell_{n,r}}} \weakto \pp{\Gamma_1^{-1/\alpha},\dots,\Gamma_r^{-1/\alpha}},
\] following from a well-known fact in extreme-value theory for i.i.d.~random variables with power-law tails \citep{resnick07heavy}, and it remains to show
\[
\pp{R_{n,\what \ell_{n,1}},\dots,R_{n,\what\ell_{n,r}}}\weakto (\calR_{\beta,1},\dots,\calR_{\beta,r}).
\]
In view of the representation of $R_{n,\ell}$ and $ \calR_{\beta,\ell}$, it suffices to prove that, for $\what Q_{n,s} := |R_{n,\what \ell_{n,s}}|$, 
\[%\equh\label{eq:combinatorial}
\pp{\what Q_{n,1},\dots,\what Q_{n,r}}\weakto (Q_{\beta,1},\dots,Q_{\beta,r}).
\]%\eque
Since $K_{n,\ell} = |R_{n,\ell}|$, the left-hand side corresponds to the law of sampling without replacement of $r$ elements from $K_n$ elements $\{K_{n,\ell}\}_{\ell\in\N:K_{n,\ell}>0}$, consisting of $J_{n,k}$ of $k$ 
for each 
$k\in\N$. So we have, for $r$ fixed,
\[
\proba\pp{\what Q_{n,j} = k\mmid \calY} = \frac{J_{n,k}}{K_n} \mbox{ almost surely,} \quad k\in\N,\, j=1,\dots,r,
\]
with $\calY = \sigma(\{Y_i\}_{i\in\N})$. 
Moreover, it is easy to show that $\what Q_{n,1},\dots,\what Q_{n,r}$ are asymptotically independent.
Therefore, it remains to show that $\what Q_{n,1}\weakto Q_\beta$, which is a well known fact for the Karlin model (a.k.a.~the paintbox random partition \citep{pitman06combinatorial}).
\end{proof}

\NN
\section{Phase transitions for Poisson--Karlin model with multiplicative noise}\label{sec:phase}
We introduce multiplicative noise to the Poisson--Karlin model. Let $\{Z_i\}_{i\in\N}$ be 
non-negative i.i.d.~random variables. 
 Assume furthermore that $\{Z_i\}_{i\in\N}$ are independent from the Poisson--Karlin model introduced above with parameters $\alpha>0,\beta\in(0,1)$. 
We are interested in the 
point process
\[
\summ i1{N(n)}\ddelta{\varepsilon_{Y_i}Z_i/r_n,U_i},
\]
for some appropriately chosen sequence $\{r_n\}_{n\in\N}$, and
 random sup-measure $M_n$ defined by
\[
M_n(\cdot):=\sup_{i=1,\dots,N(n):U_{i}\in \cdot} \varepsilon_{Y_i}Z_i.
\]
For a brief overview,  we assume that $\wb F_Z(x)\in RV_{-\alpha'}$ for some $\alpha'>0$, and this condition might be relaxed or strengthened later.
There are three different regimes for the scaling limits of $M_n$ depending on the relation between $\alpha$ and $\beta\alpha'$:
\begin{enumerate}[(i)]
\item Noise-dominance regime: $\alpha>\beta\alpha'$, 
Section \ref{sec:nd},
\item Signal-dominance regime: $\alpha<\beta\alpha'$, 
Section \ref{sec:sd},
\item Critical regime, $\alpha = \beta\alpha'$ 
Section \ref{sec:critical}.
\end{enumerate}

Throughout, we let $C$ denote a 
positive constant that may change from line to line. 

\subsection{Noise-dominance regime}
\label{sec:nd}
The main theorem in this regime is the following. 

\begin{theorem}\label{thm:nd}
Assume that $\wb F_Z(x)\in RV_{-\alpha'}$ and $\esp_{\vv\varepsilon} \varepsilon_Y^{\alpha'+\epsilon}<\infty$ 
almost surely
 for some $\epsilon>0$. 
Then, for any sequence $\{c_n\}_{n\in\N}$ 
such that
\[
\limn n\wb F_Z(c_n) =1,
\]
conditionally on $\vv\varepsilon$,
\[
\sum_{i=1}^{N(n)} \ddelta{\varepsilon_{Y_i}Z_i/c_n,U_i}\weakto
\sif\ell1\ddelta{\varepsilon_{Y_\ell}\Gamma_\ell^{-1/\alpha'}, U_{\ell}}
\]
in $\mathfrak M_p((0,\infty]\times E)$ almost surely. As a consequence, conditionally on $\vv\varepsilon$,
\equh\label{eq:as_weak_convergence}
\frac{1}{c_n}M_n \weakto \pp{\esp_{\vv\varepsilon}\varepsilon_Y^{\alpha'}}^{1/\alpha'}\cdot\cM^{{\rm is}}_{\alpha'} 
\eque
in $\SM(E)$ almost surely.
\end{theorem}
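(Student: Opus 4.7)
The strategy is to condition on $\vv\varepsilon:=\sigma(\{\varepsilon_\ell\}_{\ell\in\N})$ and exploit the conditional i.i.d.~structure of what remains. Given $\vv\varepsilon$, the triples $(\varepsilon_{Y_i},Z_i,U_i)$ are i.i.d.~across $i$: $\{\varepsilon_{Y_i}\}_i$ consists of conditionally i.i.d.~draws with the law of $\varepsilon_Y$ given $\vv\varepsilon$, and each $U_i\sim\mu$ is independent of $(\varepsilon_{Y_i},Z_i)$. Since $N(n)$ is Poisson$(n)$ and independent of everything else, the random measure
\[
\Pi_n:=\summ i1{N(n)}\ddelta{\varepsilon_{Y_i}Z_i/c_n,\,U_i}
\]
is, conditionally on $\vv\varepsilon$, a Poisson point process on $(0,\infty]\times E$ with intensity $n\cdot \proba\pp{\varepsilon_YZ/c_n\in\cdot,\,U\in\cdot\mid\vv\varepsilon}$.

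Next I would apply the conditional form of Breiman's Lemma: for almost every realization of $\vv\varepsilon$, $Z$ has regularly-varying tail of index $\alpha'$ and is independent of $\varepsilon_Y$, which has a conditional moment of order $\alpha'+\epsilon$ by hypothesis. This yields
\[
n\proba\pp{\varepsilon_YZ>c_n x\mid\vv\varepsilon}\sim \esp_{\vv\varepsilon}\varepsilon_Y^{\alpha'}\cdot n\wb F_Z(c_n x)\to \esp_{\vv\varepsilon}\varepsilon_Y^{\alpha'}\cdot x^{-\alpha'},\quad x>0,
\]
so, combining with the independence of $U\sim\mu$ from $(\varepsilon_Y,Z)$, the conditional intensity of $\Pi_n$ converges vaguely on $(0,\infty]\times E$ to $\esp_{\vv\varepsilon}\varepsilon_Y^{\alpha'}\cdot \alpha' x^{-\alpha'-1}dx\,d\mu$. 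The standard equivalence between vague convergence of intensities and weak convergence of Poisson point processes (e.g.~\citep{resnick07heavy}) then delivers the conditional weak convergence of $\Pi_n$. To match the stated series representation, I would observe that the independently marked PPP $\sif\ell1\ddelta{\Gamma_\ell^{-1/\alpha'},\,U_\ell,\,\varepsilon_{Y_\ell}}$ on $(0,\infty]\times E\times\R_+$ has intensity $\alpha' x^{-\alpha'-1}dx\,d\mu\,d{\rm Law}(\varepsilon_Y\mid\vv\varepsilon)$, and its image under the continuous map $(x,u,e)\mapsto(ex,u)$ has precisely the limit intensity identified above, by a direct change-of-variables computation.

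The random-sup-measure statement \eqref{eq:as_weak_convergence} then follows by applying the sup-functional: for each $A\in\calG_0$, conditionally on $\vv\varepsilon$,
\[
c_n\inv M_n(A)=\sup_{i\le N(n):\,U_i\in A}\varepsilon_{Y_i}Z_i/c_n \weakto \vif\ell1 \varepsilon_{Y_\ell}\Gamma_\ell^{-1/\alpha'}\inddd{U_\ell\in A},
\]
the sup-functional being continuous on the subset of $\mathfrak M_p$ whose limit is Radon (cf.~\citep[Theorem~4.2]{durieu18family}), and the right-hand side coincides in law with $\pp{\esp_{\vv\varepsilon}\varepsilon_Y^{\alpha'}}^{1/\alpha'}\calM_{\alpha'}^{\rm is}(A)$ via the same change-of-variables together with the series representation \eqref{eq:isRSM}. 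The main technical subtleties are (a) invoking Breiman's Lemma conditionally on $\vv\varepsilon$, which requires the moment hypothesis to hold on a set of full $\vv\varepsilon$-measure rather than in an unconditional sense, and (b) verifying that the limit object is Radon on $(0,\infty]\times E$ — i.e.~that $\#\ccbb{\ell:\varepsilon_{Y_\ell}\Gamma_\ell^{-1/\alpha'}>x}<\infty$ a.s.~conditionally on $\vv\varepsilon$ — which is itself an easy consequence of $\esp_{\vv\varepsilon}\varepsilon_Y^{\alpha'}<\infty$.
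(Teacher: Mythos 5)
Your proposal is correct, but it reaches the key step by a genuinely different route than the paper. The paper first establishes $\summ i1{N(n)}\delta_{Z_i/c_n}\weakto\sif i1\delta_{\Gamma_i^{-1/\alpha'}}$, attaches $(\varepsilon_{Y_i},U_i)$ as i.i.d.\ marks, and then passes to the products $\varepsilon_{Y_i}Z_i/c_n$ by adapting \citep[Proposition~7.5]{resnick07heavy} --- i.e.\ a truncation to compact rectangles $\Lambda_\delta$, application of the product functional there, and a separate negligibility estimate off $\Lambda_\delta$, which is where the conditional moment $\esp_{\vv\varepsilon}\varepsilon_Y^{\alpha'+\epsilon}<\infty$ enters (via uniform integrability). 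You instead exploit the Poisson sample size head-on: conditionally on $\vv\varepsilon$ the process $\Pi_n$ is exactly a Poisson point process with intensity $n\,\proba(\varepsilon_YZ/c_n\in\cdot,\,U\in\cdot\mid\vv\varepsilon)$, so weak convergence reduces to vague convergence of intensities, which is delivered by a conditional application of Breiman's Lemma (the same moment hypothesis now entering through Breiman rather than through uniform integrability). Your identification of the limit intensity with that of $\sif\ell1\ddelta{\varepsilon_{Y_\ell}\Gamma_\ell^{-1/\alpha'},U_\ell}$ via the map $(x,u,e)\mapsto(ex,u)$ is precisely the computation in the paper's Lemma preceding the theorem, and your Radon-ness check is the right one. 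What each approach buys: yours is shorter and avoids the $\Lambda_\delta$ bookkeeping entirely, but it is tied to the Poisson structure of $N(n)$ and would not transfer to a deterministic sample size of $n$ points; the paper's marked-point-process argument is the one that survives de-Poissonization and is reused almost verbatim in the signal-dominance regime, where no such Poisson-intensity shortcut is available. Both are complete proofs of the statement as given.
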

The convergence in \eqref{eq:as_weak_convergence} is understood as the almost-sure weak convergence with respect to $\calE$. That is, 
\[
\limn \esp_{\vv\varepsilon} f\pp{c_n\inv M_n} = \esp_{\vv\varepsilon} f\pp{\pp{\esp _{\vv\varepsilon}\varepsilon_Y^{\alpha'}}^{1/\alpha'}\cdot \calM_{\alpha'}^{\rm is}} \mbox{ almost surely,}
\] 
 for all continuous and bounded 
 functions $f:\SM(E)\to \R$. The corresponding point-process convergence is interpreted similarly.

Before proving the limit theorem, we first examine the limit random sup-measure.
\begin{lemma}
Assume that $\esp_{\vv\varepsilon}\varepsilon_Y^{\alpha'}<\infty$ almost surely. Then, 
\[%\equh\label{eq:RSM_nd}
(\esp_{\vv\varepsilon}\varepsilon_Y^{\alpha'})^{1/\alpha'}\cdot\calM_{\alpha'}^{\rm is}(\cdot) \eqd \sup_{i\in\N}\frac1{\Gamma_i^{1/\alpha'}}\varepsilon_{Y_i}\inddd{U_i\in\cdot}.
\]%\eque
\end{lemma}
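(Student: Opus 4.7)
The plan is to show that both sides of the claimed identity, viewed conditionally on $\vv\varepsilon$, are Choquet $\alpha'$-Fr\'echet random sup-measures on $E$ with a common extremal coefficient functional
\[
\theta(K)\;:=\;\esp_{\vv\varepsilon}\bb{\varepsilon_Y^{\alpha'}}\,\mu(K),\qquad K\in\calK.
\]
Since a Choquet $\alpha'$-Fr\'echet random sup-measure is uniquely determined by its extremal coefficient functional (Theorem~3.7 of \citep{molchanov16max}, as recalled in the excerpt), matching the functionals yields equality of the conditional laws, and integrating out $\vv\varepsilon$ then gives the asserted equality in distribution of random sup-measures.

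For the left-hand side, the representation \eqref{eq:isRSM} writes $\calM_{\alpha'}^{\rm is}$ as the sup of $\xi_i\inddd{V_i\in\cdot}$ for a PPP $\sum_i\delta_{(\xi_i,V_i)}$ on $\R_+\times E$ of intensity $\alpha' x^{-\alpha'-1}dx\,d\mu$, independent of $\vv\varepsilon$. Set $c:=\pp{\esp_{\vv\varepsilon}\bb{\varepsilon_Y^{\alpha'}}}^{1/\alpha'}$, which is $\vv\varepsilon$-measurable and almost-surely finite by the hypothesis. Conditionally on $\vv\varepsilon$, applying the mapping theorem for Poisson point processes to $x\mapsto cx$ rescales the intensity to $\alpha' c^{\alpha'}x^{-\alpha'-1}dx\,d\mu$, so that $c\cdot\calM_{\alpha'}^{\rm is}$ is Choquet $\alpha'$-Fr\'echet with extremal coefficient functional $K\mapsto c^{\alpha'}\mu(K)=\theta(K)$.

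For the right-hand side, since $\{\Gamma_i\}$ are arrival times of a standard Poisson process on $\R_+$, $\{Y_i\}$ are i.i.d.~with law $\sum_\ell p_\ell\delta_\ell$ on $\N$, and $\{U_i\}$ are i.i.d.~with law $\mu$ on $E$, the joint point process $\sum_i\delta_{(\Gamma_i,Y_i,U_i)}$ is a PPP on $\R_+\times\N\times E$ of intensity $dx\otimes\pp{\sif\ell1 p_\ell\delta_\ell}\otimes d\mu$, independent of $\vv\varepsilon$. Conditioning on $\vv\varepsilon$ and applying the measurable map $(x,\ell,u)\mapsto(\varepsilon_\ell/x^{1/\alpha'},u)$, a routine Fubini computation (with the change of variable $z=\varepsilon_\ell/x^{1/\alpha'}$) shows the push-forward is a PPP on $(0,\infty]\times E$ of intensity
\[
\alpha'\pp{\sif\ell1 p_\ell\varepsilon_\ell^{\alpha'}}z^{-\alpha'-1}dz\,d\mu\;=\;\alpha'\,\esp_{\vv\varepsilon}\bb{\varepsilon_Y^{\alpha'}}\,z^{-\alpha'-1}dz\,d\mu.
\]
Taking the supremum of the first coordinate over $\{i:U_i\in K\}$ then yields the conditional tail $\proba(\,\cdot\le z\mid\vv\varepsilon)=\exp(-\theta(K)z^{-\alpha'})$, identifying the right-hand side as a Choquet $\alpha'$-Fr\'echet RSM with the same functional $\theta$. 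No substantial obstacle is anticipated; the argument reduces to a textbook application of the mapping theorem together with the uniqueness of Choquet Fr\'echet random sup-measures from the extremal coefficient functional.
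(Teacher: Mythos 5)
Your proposal is correct and follows essentially the same route as the paper: condition on $\vv\varepsilon$, treat the relevant quantities as i.i.d.\ marks on a Poisson point process, and apply the mapping theorem to compute the transformed intensity, which comes out to $z^{-\alpha'}\esp_{\vv\varepsilon}\varepsilon_Y^{\alpha'}$ on $(z,\infty)$. The only (cosmetic) difference is that the paper records the conclusion as an identity of Poisson point processes on $(0,\infty]$ and lets the locations ride along as i.i.d.\ marks, whereas you package the same intensity computation through the extremal coefficient functional of a Choquet $\alpha'$-Fr\'echet random sup-measure.
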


\begin{proof}
Indeed, given $\vv\varepsilon$, $\{\varepsilon_{Y_i}\}_{i\in\N}$ are i.i.d.~random variables, and the above follows from
 \equh\label{eq:iidmarks}
 \sif \ell1 \delta_{\Gamma_i^{-1/\alpha'}\varepsilon_{Y_i}} \eqd \sif i1 \delta_{(\esp_{\vv\varepsilon} \varepsilon_Y^{\alpha'})^{1/\alpha'}\Gamma_i^{-1/\alpha'}}\; \mbox{ almost surely.}
 \eque
Conditionally on $\vv\varepsilon$, the left-hand side is again a Poisson point process \citep[Proposition 5.2]{resnick07heavy}, and it suffices to compute the intensity measure evaluated at the region $(z,\infty)$, which equals
\begin{align*}
&\int_0^\infty\int_0^\infty \inddd{xy^{-1/\alpha'}>z} dF_{\varepsilon_Y\mid \vv\varepsilon}(x) dy = z^{-\alpha'}\int_0^\infty x^{\alpha'} dF_{\varepsilon_Y\mid \vv\varepsilon}(x) = z^{-\alpha'}\esp_{\vv\varepsilon}\varepsilon_Y^{\alpha'}\; \mbox{ almost surely.}
\end{align*}
\end{proof}
\begin{proof}[Proof of Theorem \ref{thm:nd}]
We shall then work with the representation of the limit random sup-measure based on the left-hand side of \eqref{eq:iidmarks}.  It suffices to prove the convergence of point processes.   Since
$\wb F_Z(x)\in RV_{-\alpha'}$, it follows that \citep[Theorem 5.3]{resnick07heavy}
\[
 \summ i1{N(n)}\delta_{Z_i/c_n}\weakto \sif i1 \delta_{\Gamma_i^{-1/\alpha'}},
\]
whence, conditioning on $\vv\varepsilon$, 
\equh\label{eq:Sid}
\summ i1{N(n)}\ddelta{Z_i/c_n,\varepsilon_{Y_i},U_i}\weakto \sif i1 \ddelta{\Gamma_i^{-1/\alpha'},\varepsilon_{Y_i},U_i} \eque
in $\mathfrak M_p((0,\infty]\times(0,\infty)\times E)$, almost surely.
The third coordinates can be viewed as i.i.d.~marks and do not change in the limiting procedure, and hence can be omitted in the analysis. The goal is then to show that \eqref{eq:Sid} implies
\equh\label{eq:Sid2}
\summ i1{N(n)}\ddelta{Z_i\varepsilon_{Y_i}/c_n}\weakto \sif i1 \ddelta{\Gamma_i^{-1/\alpha'}\varepsilon_{Y_i}}
\eque 
as $n\to\infty$ in $\mathfrak M_p((0,\infty])$, almost surely.
Here $N(n)$ is Poisson with parameter $n$. Remark that  if one replaces $N(n)$ by $n$ above, \citep[Proposition 7.5]{resnick07heavy} proves exactly that \eqref{eq:Sid} implies \eqref{eq:Sid2}, provided $\esp_{\vv\varepsilon}\varepsilon_Y^{\alpha'+\epsilon}<\infty$ almost surely for some $\epsilon>0$. Since $N(n)$ is independent from the other random variables, the analysis here is essentially the same. We omit the details. \end{proof}

We conclude this section by elaborating on the conditions $\esp_{\vv\varepsilon}\varepsilon_Y^{\alpha'+\epsilon}<\infty,\epsilon\ge 0$. Note that in our limit theorem we need $\epsilon>0$, while for the limit random sup-measure to be finite almost surely, $\epsilon=0$ is sufficient (and this condition is also necessary). 
We say a function $f$ is dominated by a function $g\in RV_\gamma$ at infinity, if for all $x$ large enough, $f(x)\le Cg(x)$.
\begin{lemma}
For $\alpha>\alpha'\beta$, assume the following 
assumptions:
\begin{enumerate}[(i)]
\item  $\wb F_\varepsilon(x)$ is dominated by a function in $RV_{-\alpha}$ at infinity,
\item $\nu(x)$ (recall \eqref{eq:nu}) is dominated by a function in $RV_{\beta}$ at infinity.
\end{enumerate}
Then,
$\esp_{\vv\varepsilon} \varepsilon_Y^{\alpha'+\epsilon}<\infty$ almost surely for all $\epsilon\in[0,\alpha/\beta-\alpha')$. 
\end{lemma}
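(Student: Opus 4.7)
The plan is to write
\[
\esp_{\vv\varepsilon}\varepsilon_Y^{\alpha'+\epsilon} = \sif k1 p_k\varepsilon_k^{\alpha'+\epsilon}
\]
and, setting $\gamma:=\alpha'+\epsilon<\alpha/\beta$, show this series is a.s.\ finite by a truncation argument. The hypotheses provide the only analytic input. The domination of $\nu$ by a function in $RV_\beta$ at infinity, combined with the elementary inequality $\nu(1/p_k)\ge k$ (valid since $p_k$ is nonincreasing), yields $p_k\le Ck^{-1/\beta}L_0(k)$ for some slowly varying $L_0$; the domination of $\wb F_\varepsilon$ by a function in $RV_{-\alpha}$ gives $\wb F_\varepsilon(x)\le Cx^{-\alpha}L_1(x)$ for some slowly varying $L_1$ and $x$ large. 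In particular, for any prescribed $\eta>0$, both $p_k\le k^{-1/\beta+\eta}$ and $\wb F_\varepsilon(x)\le x^{-\alpha+\eta}$ hold eventually.

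In the easy subcase $\gamma<\alpha$, one has $\esp\varepsilon^\gamma<\infty$ (integrate the tail bound), and Tonelli gives $\esp\sif k1 p_k\varepsilon_k^\gamma=\esp\varepsilon^\gamma<\infty$, so the sum is a.s.\ finite.

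In the hard subcase $\alpha\le\gamma<\alpha/\beta$, I would choose $\eta>0$ small and $\lambda>0$ satisfying
\[
\frac{1}{\alpha-\eta}<\lambda<\frac{1/\beta-1-\eta}{\gamma-\alpha+\eta}.
\]
This range is nonempty for $\eta$ small, since $(\alpha-\eta)(1/\beta-1-\eta)\to \alpha/\beta-\alpha$ as $\eta\downarrow0$, which by hypothesis strictly exceeds $\gamma-\alpha$. Set $c_k:=k^\lambda$ and decompose
\[
\sif k1 p_k\varepsilon_k^\gamma = \sif k1 p_k\varepsilon_k^\gamma\indd{\varepsilon_k\le c_k}+\sif k1 p_k\varepsilon_k^\gamma\indd{\varepsilon_k>c_k}.
\]
For the truncated piece, use the elementary inequality $x^\gamma\indd{x\le c}\le c^{\gamma-(\alpha-\eta)}x^{\alpha-\eta}$ (valid since $\gamma\ge\alpha-\eta$) together with $\esp\varepsilon^{\alpha-\eta}<\infty$ to bound the expectation by $\esp\varepsilon^{\alpha-\eta}\sum_k p_k c_k^{\gamma-\alpha+\eta}$; plugging in the slowly-varying bounds and invoking the upper constraint on $\lambda$ makes this series converge, so the truncated sum is a.s.\ finite. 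For the excess piece, Borel--Cantelli applied to
\[
\sif k1\proba(\varepsilon_k>c_k)\le C\sif k1 k^{-\lambda(\alpha-\eta)}<\infty
\]
(ensured by the lower constraint on $\lambda$) shows that a.s.\ only finitely many indicators are nonzero, so this sum is finite termwise.

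The only delicate point is checking that the two constraints on $\lambda$ can be simultaneously met, which reduces precisely to the hypothesis $\gamma<\alpha/\beta$; apart from this bookkeeping the argument is a standard truncation.
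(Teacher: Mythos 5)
Your argument is correct, and it takes a noticeably different route from the paper's. The paper proves the lemma by applying Kolmogorov's three-series theorem to $\sum_\ell p_\ell\varepsilon_\ell^{\alpha'+\epsilon}$: the first series $\sum_\ell\P\bigl(p_\ell\varepsilon_\ell^{\alpha'+\epsilon}>c\bigr)$ is controlled exactly as in your Borel--Cantelli step (Potter's bound applied to $\wb F_\varepsilon$ and to $p_\ell\in RV_{-1/\beta}$), while the truncated means are handled with a \emph{constant} truncation level on the product and the concavity bound $x\ind_{\{x\le c\}}\le c^{1-\beta'}x^{\beta'}$ for a single auxiliary exponent $\beta'\in(\beta,\,1\wedge\alpha/(\alpha'+\epsilon))$, reducing everything to $\sum_\ell p_\ell^{\beta'}<\infty$ and $\esp\varepsilon^{(\alpha'+\epsilon)\beta'}<\infty$. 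You instead truncate $\varepsilon_k$ alone at a \emph{growing} level $c_k=k^\lambda$, bound the truncated part by Tonelli together with $x^\gamma\ind_{\{x\le c\}}\le c^{\gamma-\alpha+\eta}x^{\alpha-\eta}$ and $\esp\varepsilon^{\alpha-\eta}<\infty$, and dispose of the excess by Borel--Cantelli; the window for $\lambda$ does open up for small $\eta$ precisely because $\gamma<\alpha/\beta$, as you check, and the preliminary split into $\gamma<\alpha$ versus $\alpha\le\gamma<\alpha/\beta$ is handled correctly. Both routes rest on the same arithmetic and the same Potter-bound inputs; yours is marginally more elementary in that it avoids invoking the three-series theorem (whose variance condition is in any case redundant for nonnegative independent summands), at the cost of juggling the two parameters $\eta$ and $\lambda$ where the paper needs only the single exponent $\beta'$.
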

\begin{proof}
By definition, $\E_{\vv\varepsilon}\varepsilon_Y^{\alpha'+\epsilon}=\sif \ell1 p_\ell\varepsilon_\ell^{\alpha'+\epsilon}$. The convergence of this series follows from the Kolmogorov's three-series theorem. Indeed, first for any $c>0$,
\[
\sif \ell1\P(p_\ell\varepsilon_\ell^{\alpha'+\epsilon}>c)= \sif\ell1 
 \wb F_{\varepsilon}((c/p_\ell)^{1/(\alpha'+\epsilon)})\le C\sif \ell1 p_\ell^{\alpha/(\alpha'+\epsilon)-\epsilon_1}
\]
for some small $\epsilon_1>0$ by Potter's bound. 
Assume that \eqref{eq:nu} holds, which is equivalent to  that $p_\ell\in RV_{-1/\beta}$ as $\ell\to\infty$, and hence the above is bounded by $C\sif \ell1\ell^{-(1/\beta-\epsilon_1)(\alpha/(\alpha'+\epsilon)-\epsilon_1)}$ by Potter's bound again. By the assumption $\alpha'+\epsilon<\alpha/\beta$, one can tune $\epsilon_1>0$ small enough so that the power over $\ell$ is strictly less than $-1$, and hence the series is finite. 
Next, choose $\beta'\in(\beta,1\wedge \alpha/(\alpha'+\epsilon))$. Then,  $\sum_{\ell=1}^\infty p_\ell^{\beta'}<\infty$ as $\beta'>\beta$ and $\E\varepsilon^{\alpha'\beta'}<\infty$ as $(\alpha'+\epsilon)\beta'<\alpha$. It then follows that
\[
 \sif\ell1\E\left(p_\ell \varepsilon_\ell^{\alpha'+\epsilon} \ind_{\{p_\ell\varepsilon_\ell^{\alpha'+\epsilon}\le c \}}\right) \le c^{1-\beta'}\sif \ell1 p_\ell^{\beta'}\esp \varepsilon^{(\alpha'+\epsilon)\beta'}<\infty
\]
and 
$\sif\ell1\Var(p_\ell \varepsilon_\ell^{\alpha'+\epsilon} \inddd{p_\ell\varepsilon_\ell^{\alpha'}\le c})<\infty$, 
where $\Var$ stands for the variance. The proof can be modified to prove the case that $\nu(x)$ is dominated by a function in $RV_\beta$. 
\end{proof}

\begin{remark}
Assume that $\nu$ satisfies \eqref{eq:nu}, $\wb F_\varepsilon(x)\in RV_{-\alpha}$ and  $\wb F_Z(x)\in RV_{-\alpha'}$. The above says that if $\alpha>\alpha'\beta$ then $\esp_{\vv\varepsilon}\varepsilon_Y^{\alpha'}<\infty$ almost surely. For this to hold at the boundary case when $\alpha = \alpha'\beta$, a necessary and sufficient condition is that
\equh\label{eq:nd_well_defined}
\esp\nu\pp{\varepsilon^{\alpha'}}<\infty.
\eque
In particular when $\nu(n)\sim Cn^\beta$, the above is equivalent to $\esp\varepsilon^{\alpha'\beta} <\infty$. To see this, apply the three-series theorem to  $\esp_{\vv\varepsilon} \varepsilon_Y^{\alpha'}=\sif\ell1p_\ell\varepsilon_\ell^{\alpha'}$. 
The first series becomes
\equh\label{eq:3series}
 \sif\ell1\P\pp{p_\ell\varepsilon_\ell^{\alpha'}>1} = \int_0^\infty \wb F_\varepsilon(x^{1/\alpha'})\nu(dx)=\int_0^\infty \wb F_\varepsilon(y)\nu(d(y^{\alpha'})).
\eque
Note that, by integration by parts, 
\equh\label{eq:=infty}
\int_0^a \wb F_\varepsilon(y)\nu(d(y^{\alpha'}))=\wb F_\varepsilon(a)\nu(a^{\alpha'}) + \int_0^a \nu\spp{y^{\alpha'}}d F_\varepsilon(y).
\eque
Observe also that $0\le \wb F_\varepsilon(a)\nu(a^{\alpha'})\le \esp \nu(\varepsilon^{\alpha'})$ for $a>0$ and that $\int_0^a \nu\spp{y^{\alpha'}}d F_\varepsilon(y)\to \esp \nu\spp{\varepsilon^{\alpha'}}$ as $a\to\infty$. Then, \eqref{eq:3series} is finite, if and only if \eqref{eq:nd_well_defined} holds.
For the second series, applying 
$\esp \spp{\varepsilon^{\alpha'}\inddd{\varepsilon\le x}}\sim (\alpha'-\alpha)\inv x^{\alpha'}
\wb F_\varepsilon(x)
$ as $x\to\infty$ with $\alpha'>\alpha$,
we have that
\[
\sif \ell1 \esp\spp{p_\ell\varepsilon_\ell^{\alpha'}\indd{p_\ell\varepsilon_\ell^{\alpha'}\le 1}}\le C\sif\ell1\wb F_\varepsilon(p_\ell^{-1/\alpha'}) = C\int_0^\infty \wb F_\varepsilon(x^{1/\alpha'})\nu(dx),
\]
the same upper bound as in \eqref{eq:3series}. The third series can be treated similarly.
Note that \eqref{eq:=infty} also says that if $\alpha<\alpha'\beta$, then $\esp_{\vv\varepsilon}\varepsilon_Y^{\alpha'}=\infty$ almost surely.
\end{remark}
\subsection{Signal-dominance regime}
\label{sec:sd}
Throughout we write 
\equh\label{eq:ZW}
\wt Z_W \equiv \max_{i=1,\dots,W}Z_i,
\eque where $W$ is an $\N$-valued random variable (possibly a constant) that is {\em assumed to be independent from $\{Z_i\}_{i\in\N}$}. 
The main theorem of this regime is the following.

\begin{theorem}\label{thm:sd}
Assume that $\wb F_\varepsilon(x)\in RV_{-\alpha}$, and that 
\equh\label{eq:cond_sd}
\esp \wt Z_{Q_{\beta-\epsilon}}^{\alpha+\epsilon}<\infty \mbox{ for some } \epsilon>0.
\eque
For any sequence $\{a_n\}_{n\in\N}$ such that 
\equh\label{eq:an}
\limn\Gamma(1-\beta)\nu(n)\wb F_\varepsilon(a_n)= 1,
\eque
we have
\equh\label{eq:PP_convergence_sd}
\summ i1{N(n)}\ddelta{\varepsilon_{Y_i}Z_i/a_n,U_i}\weakto \sif \ell1\summ i1{Q_{\beta,\ell}}\ddelta{\Gamma_\ell^{-1/\alpha}Z_{\ell,i},U_{\ell,i}}
\eque
as $n\to\infty$ in $\mathfrak M_p((0,\infty]\times E)$, 
where 
$\{Z_{\ell,i}\}_{\ell,i\in\N}$ i.i.d.\ with law $F_Z$ and $\{U_{\ell,i}\}_{\ell,i\in\N}$ i.i.d.\ with law $\mu$, both independent from the other families of random variables.
As a consequence,
\[
\frac{1}{a_n}M_n(\cdot) \weakto \mabZ(\cdot) := \vif \ell1 \frac1{\Gamma_{\ell}^{1/\alpha}}\max_{i=1,\dots,Q_{\beta,\ell}}Z_{\ell,i}\inddd{U_{\ell,i}\in\cdot} 
\]
as $n\to\infty$ in ${\rm SM}(E)$.

\end{theorem}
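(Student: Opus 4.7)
The strategy is to bootstrap from Theorem \ref{thm:DW18} via marking with i.i.d.\ noises followed by a Breiman-type truncation. Regrouping indices by $\ell = Y_i$ gives
\[
\zeta_n:=\summ i1{N(n)}\ddelta{\varepsilon_{Y_i}Z_i/a_n,U_i}
=\sum_{\ell:K_{n,\ell}>0}\sum_{j\le N(n):Y_j=\ell}\ddelta{\varepsilon_\ell Z_j/a_n,U_j},
\]
and, conditionally on the signal structure $\sigma(\{\varepsilon_\ell\},\{Y_i\},\{U_i\},N(n))$, the $K_{n,\ell}$ noises attached to each active $\ell$ form i.i.d.\ copies of $Z$, independent across $\ell$. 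Augmenting Theorem \ref{thm:DW18} with an independent i.i.d.\ family $\{Z_{\ell,i}\}_{\ell,i\in\N}\sim F_Z$ of marks (adding independent marks preserves point-process convergence) therefore yields
\[
\sif\ell1\ddelta{\varepsilon_\ell/a_n,R_{n,\ell},(Z_{\ell,i})_i}\weakto\sif\ell1\ddelta{\Gamma_\ell^{-1/\alpha},\calR_{\beta,\ell},(Z_{\ell,i})_i}
\]
in the corresponding marked point-process space.

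Next, I would fix $\eta>0$ and split $\zeta_n=\zeta_n^{>\eta}+\zeta_n^{\le\eta}$ according to whether $\varepsilon_{Y_i}>\eta a_n$ or not. For the large-signal part $\zeta_n^{>\eta}$, only asymptotically finitely many signal indices $\ell$ contribute (namely those with $\Gamma_\ell^{-1/\alpha}>\eta$ in the limit). Applying the finite-configuration continuous map that sends $(x_\ell,\{u_{\ell,1},\dots,u_{\ell,k_\ell}\},(z_{\ell,i})_i)$ with $x_\ell>\eta$ to $\sum_{i=1}^{k_\ell}\delta_{(x_\ell z_{\ell,i},u_{\ell,i})}$ to the marked convergence above, one obtains via the continuous mapping theorem
\[
\zeta_n^{>\eta}\weakto \sum_{\ell:\Gamma_\ell^{-1/\alpha}>\eta}\summ i1{Q_{\beta,\ell}}\ddelta{\Gamma_\ell^{-1/\alpha}Z_{\ell,i},U_{\ell,i}},
\]
which is the restriction of the target limit of \eqref{eq:PP_convergence_sd} to points of first coordinate $>\eta$.

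The main obstacle is the Breiman-type negligibility of the small-signal part: for every $\delta>0$,
\[
\limsup_{\eta\downarrow 0}\limsupn\proba\pp{\zeta_n^{\le\eta}((\delta,\infty]\times E)>0}=0.
\]
I would prove this by a union bound over $\ell$ combined with a Markov inequality at exponent $\alpha+\epsilon$ applied to $\wt Z_{K_{n,\ell}}$; using the independence of $\varepsilon_\ell$ from $(K_{n,\ell},\{Z_j\})$, the probability is bounded above by
\[
\frac{\esp\bb{\varepsilon^{\alpha+\epsilon}\inddd{\varepsilon\le\eta a_n}}}{(\delta a_n)^{\alpha+\epsilon}}\,\sum_\ell\esp\wt Z_{K_{n,\ell}}^{\alpha+\epsilon}.
\]
Assumption \eqref{eq:cond_sd}, combined with the Karlin-type fact that the active cluster sizes $\{K_{n,\ell}:K_{n,\ell}>0\}$ are, uniformly in $n$, stochastically controlled up to constants by Sibuya variables with parameter $\beta-\epsilon$, yields $\sum_\ell\esp\wt Z_{K_{n,\ell}}^{\alpha+\epsilon}\le C\nu(n)$. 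Karamata's theorem applied to $\wb F_\varepsilon\in RV_{-\alpha}$ at exponent $\alpha+\epsilon$ then gives $\esp\bb{\varepsilon^{\alpha+\epsilon}\inddd{\varepsilon\le\eta a_n}}\le C\eta^\epsilon a_n^{\alpha+\epsilon}\wb F_\varepsilon(a_n)$, so, using \eqref{eq:an}, the whole bound reduces to $C\delta^{-(\alpha+\epsilon)}\eta^\epsilon$, which tends to $0$ as $\eta\downarrow 0$; this closely mirrors the adaptation of \citep[Proposition 7.5]{resnick07heavy} alluded to in the introduction. Combining with the convergence of $\zeta_n^{>\eta}$ via the standard point-process truncation lemma (\citep[Theorem 3.2]{resnick07heavy}) yields \eqref{eq:PP_convergence_sd}; the random-sup-measure convergence $a_n^{-1}M_n\weakto\mabZ$ then follows immediately from the point-process convergence via Remark~2.1.
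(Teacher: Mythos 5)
Your overall architecture is sound and close in spirit to the paper's: the paper also starts from the marked convergence $\sum_i\delta_{(\varepsilon_{Y_i}/a_n,Z_i,U_i)}\eqd\sum_\ell\sum_{i\le K_{n,\ell}}\delta_{(\varepsilon_\ell/a_n,Z_{\ell,i},U_{\ell,i})}$ and then adapts \citep[Proposition 7.5]{resnick07heavy}, restricting to a compact window $\Lambda_\delta=\{x\ge\delta,\ y\in[\delta,\delta^{-2}]\}$ in the $(\varepsilon/a_n,Z)$-plane and controlling four complementary regions, whereas you truncate only in the signal coordinate and handle the surviving finite configuration by continuous mapping. That difference is cosmetic. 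The genuine gap is in your negligibility step: you reduce everything to the bound $\sum_\ell\esp\wt Z_{K_{n,\ell}}^{\alpha+\epsilon}=\sum_{k\ge1}\esp J_{n,k}\,\esp\wt Z_k^{\alpha+\epsilon}\le C\nu(n)$ uniformly in $n$, and you justify it by asserting, as a ``Karlin-type fact,'' that the active cluster sizes are uniformly stochastically controlled by Sibuya variables with parameter $\beta-\epsilon$. This is not a standard citable fact; it is precisely the hardest part of the paper's proof (the estimate $\esp\wt Z_{\what Q_n}^{\alpha_+}\le C(1+\esp\wt Z_{Q_{\beta_-}}^{\alpha_+})$, where $\what Q_n$ has law $\esp J_{n,k}/\esp K_n$). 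The difficulty is that $\esp J_{n,k}=\frac{\nu(n)}{k!}\int_0^{np_1}(k-y)y^{k-1-\beta}e^{-y}\frac{L(n/y)}{L(n)}\,dy$ has a sign-changing kernel, so one cannot simply apply Potter's bound to the integrand to compare it with $\Gamma(1-\beta)\nu(n)p_k\topp{\beta-\epsilon}$ uniformly in $k$; the paper gets around this with an Abel-summation identity that rewrites $\sum_k\frac{(k-y)y^{k-1}}{k!}D_k$ in terms of the nonnegative increments $D_{k+1}-D_k$ of the increasing sequence $D_k=\esp\wt Z_k^{\alpha_+}$, applies Potter's bound only after that rewriting, and treats the ranges $y\in[0,1]$ and $y\in[1,np_1]$ separately. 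Without an argument of this kind (or some substitute), your key display $\sum_\ell\esp\wt Z_{K_{n,\ell}}^{\alpha+\epsilon}\le C\nu(n)$ is unsupported, and with it the entire small-signal estimate.

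A secondary point, easily repaired but worth flagging: in your continuous-mapping step for the large-signal part, the functional $(x,y)\mapsto xy$ is not proper on $\{x>\eta\}\times(0,\infty)$ (the set $\{xy\ge c,\ x\ge\eta\}$ is not compact because $y$ is unbounded), so you cannot invoke the product functional on the whole restricted process; you must either argue pointwise through the a.s.\ finiteness of the configuration $\{\ell:\Gamma_\ell^{-1/\alpha}>\eta\}$ together with convergence of the cardinalities $|R_{n,\ell}|\to Q_{\beta,\ell}$, or additionally truncate the noise coordinate as the paper does with $\Lambda_\delta$ (which is why the paper has the extra regions $A_{3,\delta}$ and $A_{4,\delta}$ to dispose of). The consequence for $a_n^{-1}M_n$ and the treatment of the locations $U_i$ as i.i.d.\ marks are fine.
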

Notice that it is straightforward to see that for $\mabZ$ to be almost surely finite, a sufficient and necessary condition is $\esp\wt Z_{Q_\beta}^\alpha<\infty$. Indeed, 
for every open set $A\subset E$, 
writing $\wt Z_{Q_\beta}(A):=\max_{i=1,\dots,Q_\beta: U_i\in A}Z_i$, 
\begin{align*}
\P\left(\mabZ(A)\le z\right)
&=\P\left(\sup_{\ell\ge 1}\Gamma_\ell^{-1/\alpha} \max_{i=1,\dots,Q_{\beta,\ell}:U_{\ell,i}\in A}Z_{\ell,i} \le z \right)\\
&=\exp\left(-\int_0^\infty\wb F_{\wt Z_{Q_\beta}(A)}(z/x)\alpha x^{-\alpha-1}dx \right) =\exp\left(- z^{-\alpha} \E\wt Z_{Q_\beta}^\alpha(A)\right).
\end{align*}
Again, the condition for $\mabZ$ to be finite almost surely (\eqref{eq:cond_sd} with $\epsilon=0$) is strictly weaker than what is needed for the convergence. To see that \eqref{eq:cond_sd} holds for $\wb F_Z\in RV_{-\alpha'}$ with $\alpha<\alpha'\beta$, it suffices to pick $\epsilon>0$ such that 
$\alpha+\epsilon<(\beta-\epsilon)\alpha'$. 
Indeed, 
\equh\label{eq:interpretation}
\esp \wt Z^{\alpha+\epsilon}_{Q_{\beta-\epsilon}} =C\int_0^\infty x^{\alpha+\epsilon-1}\wb F_{\wt Z_{Q_{\beta-\epsilon}}}(x)dx \le C\pp{1+\int_1^\infty x^{\alpha+\epsilon-1}x^{-(\beta-\epsilon)\alpha'}L_Z^{\beta-\epsilon}(x)dx}
\eque
 for some slowly-varying function $L_Z$, where in the last step we used $\wb F_{\wt Z_{Q_\beta}}(x) = \wb F_Z(x)^\beta$.
\begin{remark}
In view of \eqref{eq:interpretation}, the condition $\esp \wt Z_{Q_{\beta-\epsilon}}^{\alpha+\epsilon}<\infty$ 
is slightly more 
restrictive
 than $\esp \wt Z_{Q_\beta}^\alpha<\infty$. This is similar in spirit to the condition in Breiman's Lemma: for non-negative independent random variables $X,Y$, $\wb F_Y(x)\in RV_{-\alpha}$,  for the limit theorem $\lim_{x\to\infty}\wb F_{XY}(x)/\wb F_Y(x)= \esp X^\alpha$ to hold, one needs $\esp X^{\alpha+\epsilon}<\infty$ for some $\epsilon>0$. 
\end{remark}
\begin{proof}[Proof of Theorem \ref{thm:sd}]
We focus on \eqref{eq:PP_convergence_sd}. 
Recall $K_{n,\ell}$ in \eqref{eq:statistics}. 
We have seen in Theorem \ref{thm:DW18} that
\[
\summ i1{N(n)}\ddelta{\varepsilon_{Y_i}/a_n,U_i}\eqd \sif\ell1\summ i1{K_{n,\ell}}\ddelta{\varepsilon_\ell/a_n,U_{\ell,i}}\weakto \sif \ell1 \summ i1{Q_{\beta,\ell}}\ddelta{\Gamma_\ell^{-1/\alpha},U_{\ell,i}},
\]
whence
\equh\label{eq:joint_sd}
\summ i1{N(n)}\ddelta{\varepsilon_{Y_i}/a_n,Z_i,U_i}\eqd \sif\ell1\summ i1{K_{n,\ell}}\ddelta{\varepsilon_\ell/a_n,Z_{\ell,i},U_{\ell,i}}\weakto \sif \ell1 \summ i1{Q_{\beta,\ell}}\ddelta{\Gamma_\ell^{-1/\alpha},Z_{\ell,i},U_{\ell,i}}.
\eque
The third coordinates of the points can be viewed as i.i.d.~marks and they do not change in the limit. So it suffices to focus on
\[
 \eta_n:=\summ i1{N(n)}\delta_{\varepsilon_{Y_i}Z_{i}/a_n}\qmand  \eta:=\sif\ell1\sum_{i=1}^{Q_{\beta,\ell}} \delta_{\Gamma_\ell^{-1/\alpha}Z_{\ell,i}},
\]
and prove
\[
 \eta_n \weakto \eta \quad \text{ in }\mathfrak M_p((0,\infty]).
\]
Note that we cannot directly apply the product functional to \eqref{eq:joint_sd} as $\{(x,y)\in (0,\infty]\times(0,\infty) : |xy|\ge 1\}$ is not compact in $(0,\infty]\times(0,\infty)$.
 The proof follows the approach of Resnick \cite[Proposition~7.5]{resnick07heavy}.
Let $\delta \in (0,1)$ and
\[
 \Lambda_\delta 
 :=
  \ccbb{(x,y)\in (0,\infty]\times(0,\infty) :x\ge \delta, y\in \bb{\delta, \delta^{-2}}}.
\]
It is a compact subset of $(0,\infty]\times(0,\infty)$ and by restriction,
\[
\summ i1{N(n)}\ddelta{\varepsilon_{Y_i}/a_n,Z_i}(\Lambda_\delta\cap\cdot)\weakto  \sif \ell1 \summ i1{Q_{\beta,\ell}}\ddelta{\Gamma_\ell^{-1/\alpha},Z_{\ell,i}}(\Lambda_\delta\cap\cdot)\quad\text{ in }\mathfrak M_p(\Lambda_\delta).
\]
Since for any $c>0$, $\{(x,y)\in \Lambda_\delta : |xy|\ge c\}$ is a compact subset of $\Lambda_\delta$, we can use the product functional to get
\[
 \eta_{n,\delta}:=\sum_{i=1}^{N(n)} \delta_{\varepsilon_{Y_i}Z_i/a_n}\inddd{(\varepsilon_{Y_i}/a_n, Z_i)\in\Lambda_\delta}\weakto \eta_\delta:=\sif \ell1\sum_{i=1}^{Q_{\beta,\ell}} \delta_{\Gamma_\ell^{-1/\alpha}Z_{\ell,i}}\inddd{(\Gamma_\ell^{-1/\alpha},Z_{\ell,i})\in\Lambda_\delta},
\]
as $n\to\infty$ in $\mathfrak M_p((0,\infty])$.
Further, $\eta_\delta\weakto \eta$, as $\delta\downarrow 0$. To conclude, it remains to prove that for all positive continuous 
functions
 $f$ with compact support in $(0,\infty]$ and all $\epsilon>0$,
\[
 \lim_{\delta\downarrow0}\limsupn \P\left( \sum_{i=1}^{N(n)} f\left(\varepsilon_{Y_i}Z_i/a_n\right)\inddd{(\varepsilon_{Y_i}/a_n,Z_i)\notin\Lambda_\delta}>\epsilon\right)=0.
\]
Fix such a function $f$ and a real $\kappa>0$ such that $f\equiv 0$ on $(0,\kappa]$. It is sufficient to prove that
\begin{equation}\label{eq:Lambda_approx}
  \lim_{\delta\downarrow0}\limsupn\P\left(\bigcup_{i=1}^{N(n)} \ccbb{\varepsilon_{Y_i}Z_i/a_n>\kappa,\, (\varepsilon_{Y_i}/a_n,Z_i)\in\Lambda_\delta^c}\right)=0,
\end{equation}
where $\Lambda_\delta^c:=((0,\infty]\times(0,\infty))\setminus\Lambda_\delta$. The proof of \eqref{eq:Lambda_approx} is divided into 4 steps by writing $\Lambda_\delta^c$ as the disjoint union of the sets 
\begin{align*}
A_{1,\delta}&:=\left\{(x,y)\in(0,\infty]\times(0,\infty) : x<\delta,\,y<\delta^{-1/2} \right\},\\
A_{2,\delta}&:=\left\{(x,y)\in(0,\infty]\times(0,\infty) : x<\delta,\,y\ge\delta^{-1/2}  \right\},\\
A_{3,\delta}&:=\left\{(x,y)\in(0,\infty]\times(0,\infty) : x\ge\delta,\,y<\delta \right\},\\
A_{4,\delta}&:=\left\{(x,y)\in(0,\infty]\times(0,\infty) : x\ge \delta,\, y>\delta^{-2}\right\}.\\
\end{align*}
Write 
\[
 E_{j,\delta,n}:=\P\left(\bigcup_{i=1}^{N(n)} \left\{\varepsilon_{Y_i}Z_i/a_n>\kappa,\, (\varepsilon_{Y_i}/a_n,Z_i)\in A_{j,\delta} \right\}\right).
\]

\medskip
\noindent 1)
If $(\varepsilon_{Y_i}/a_n,Z_i)\in A_{1,\delta}$, then $\varepsilon_{Y_i}Z_i/a_n<\delta^{1/2}$. Thus, when $\delta^{1/2}\le \kappa$, $E_{1,\delta,n} = 0$ for all $n\in\N$. 
 \medskip
 
\noindent 2) 
 Let $\calY = \sigma(\{Y_i\}_{i\in\N})$, with respect to which $K_{n,\ell}$ is measurable.  We start by writing that
\begin{align*}
 E_{2,\delta,n}
&\le \P\left(\bigcup_{i=1}^{N(n)} \left\{\varepsilon_{Y_i}Z_i/a_n>\kappa,\, Z_i>\delta^{-1/2} \right\}\right)\le \E\left( \sum_{\ell= 1}^\infty \P\left(\varepsilon_{\ell}\wt Z_{K_{n,\ell}}/a_n>\kappa,\, \wt Z_{K_{n,\ell}}>\delta^{-1/2}\mmid \calY,N(n)\right)\right)\\
&=\sif k1 \E J_{n,k}\P\left(\varepsilon\wt Z_{k}>\kappa a_n  ,\wt Z_{k}>\delta^{-1/2} \right)=:\wt E_{2,\delta,n}.
\end{align*}
The goal is to show that
\equh\label{eq:E_2dn}
\lim_{\delta\downarrow0}\limsupn\wt E_{2,\delta,n}=0.
\eque
Introduce 
\[%\equh\label{eq:phi_nk}
\varphi_{n,k,\delta}:= \frac1{\wb F_\varepsilon(a_n)}\proba\pp{\varepsilon\wt Z_k/a_n>\kappa, \wt Z_k>\delta^{-1/2}} = \esp \pp{\frac{\wb F_\varepsilon\spp{\kappa a_n/\wt Z_k}}{\wb F_\varepsilon(a_n)}\inddd{\wt Z_k>\delta^{-1/2}}}.
\]%\eque
Then $p_{n,k}:=\esp  J_{n,k}/\esp K_n$, $k\in\N$,  yield a probability measure on $\N$. Let $\what Q_n$ be a random variable with such a law, independent from all other random variables. Then,
\[
\wt E_{2,\delta,n} = \esp K_n\cdot \wb F_\varepsilon(a_n) \cdot \esp \varphi_{n,\what Q_n,\delta}.
\]
Recall that
\equh\label{eq:kn}
\limn \frac{\esp K_n}{\nu(n)} = \Gamma(1-\beta) \qmand \limn\esp K_n\cdot\wb F_\varepsilon(a_n) = 1,
\eque
where the second part follows from the first and our assumption on $a_n$ in \eqref{eq:an}.  
We shall argue that
\equh\label{eq:DCT}
\limn \esp \varphi_{n,\what Q_n,\delta} =\kappa^{-\alpha}\esp \pp{\wt Z_{Q_\beta}^\alpha\inddd{\wt Z_{Q_\beta}>\delta^{-1/2}}}.
\eque
This and \eqref{eq:kn} shall then conclude the proof of \eqref{eq:E_2dn}. 
With a little abuse of language, we assume in addition that $\what Q_n\to Q_\beta$ almost surely (strictly speaking, we could always construct them in another probability space, which is enough for the proof). Then
\[
\limn\frac{\wb F_\varepsilon(\kappa a_n/\wt Z_{\what Q_n})}{\wb F_\varepsilon(a_n)}\inddd{\wt Z_{\what Q_n}>\delta^{-1/2}} = \kappa^{-\alpha}\wt Z_{Q_\beta}^\alpha\inddd{\wt Z_{Q_\beta}>\delta^{-1/2}}, \mbox{ almost surely.}
\]
The desired \eqref{eq:DCT} is then the convergence of the corresponding expectation of the above, and it suffices to prove uniform integrability. Namely, we shall show that for some $\epsilon_1>0$ and
\equh\label{eq:UI}
\wb\varphi_n:=  \esp \pp{\pp{\frac{\wb F_\varepsilon\spp{\kappa a_n/\wt Z_{\what Q_n}}}{\wb F_\varepsilon(a_n)}}^{1+\epsilon_1}\inddd{\wt Z_{\what Q_n}>\delta^{-1/2}}} \le C<\infty \mfa n\in\N.
\eque
By Potter's bound  \citep[Proposition 1.5.6]{bingham87regular}, for some $\alpha_+\in(\alpha,\alpha')$ (depending on $\epsilon_1$, which can be arbitrarily small)
\[
\esp \pp{\pp{\frac{\wb F_\varepsilon\spp{\kappa a_n/\wt Z_{k}}}{\wb F_\varepsilon(a_n)}}^{1+\epsilon_1}\inddd{\wt Z_k>\delta^{-1/2}}} \le C \esp \wt Z_k^{\alpha_+}
 \mfa n\in\N,\, k\in\N.
\]
whence
\[%\equh\label{eq:split}
\wb\varphi_n  \le \frac C{\esp K_n} \sif k1 \esp J_{n,k}\esp \wt Z_k^{\alpha_+} = C\esp \wt Z_{\what Q_n}^{\alpha_+}.
\]%\eque
We shall compare $\esp J_{n,k}/\esp K_n$ with $p_k\topp\beta$ 
(recall \eqref{eq:Sibuya}), and eventually show that
\equh\label{eq:Q_beta_-}\esp \wt Z_{\what Q_n}^{\alpha_+}\le C\pp{1+\esp \wt Z_{Q_{\beta_-}}^{\alpha_+}}
\eque
for some $\beta_-\in(0,\beta)$. Then, under the assumption \eqref{eq:cond_sd}, we can pick $\beta_-<\beta$ and $\alpha_+>\alpha$ so that the right-hand side above is finite, whence \eqref{eq:UI} and \eqref{eq:E_2dn} follow. To show \eqref{eq:Q_beta_-}, introduce
\[%\equh\label{eq:fnk}
F_{n,k}(x) := \pp{\frac nx}^ke^{-n/x} \qmand 
f_k(x):=\frac d{dy}(x^ke^{-x}) = (k-x)y^{k-1}e^{-x}.
\]%\eque
Note that $F_{n,k}(n/x)$ does not depend on $n$ and that $(d/dx)(F_{n,k}(n/x))=f_k(x)$.
%(Note that $f_k(y) = (d/dy)(F_{n,k}(n/y))$.)
Then,
\begin{align}
\esp J_{n,k} &= \frac1{k!}\int_0^\infty F_{n,k}(x)\nu(dx)
 = \frac1{k!}\int_{1/p_1}^\infty \frac d{dx}F_{n,k}(x)\nu(x)dx \label{eq:EJ_nk}\\
 &= \frac{\nu(n)}{k!}\int_0^{np_1}\frac d{dy}\pp{F_{n,k}\pp{\frac ny}}\frac{\nu(n/y)}{\nu(n)}dy = \frac{\nu(n)}{k!}\int_0^{np_1}f_{k}(y)  y^{-\beta}\frac{L(n/y)}{L(n)}dy.\nonumber
\end{align}
Further, 
\equh\label{eq:pk_beta}
 \Gamma(1-\beta) p_k^{(\beta)}=\frac{\beta \Gamma(k-\beta)}{k!}=\frac{1}{k!}\int_0^\infty y^ke^{-y}%F_{n,k}\pp{\frac ny}
 \beta y^{-\beta-1}dy=\frac{1}{k!}\int_0^\infty
f_{k}(y)y^{-\beta}dy.
\eque
Note that we cannot compare the two directly as $f_{k}$ is not non-negative. Instead we write
\[
\frac1{\esp K_n}\sif k1 \esp J_{n,k}\esp\wt Z_k^{\alpha_+} = \frac {\nu(n)}{\esp K_n} \int_0^{np_1}\sif k1 \frac{(k-y)y^{k-1-\beta}}{k!}\esp \wt Z_k^{\alpha_+}e^{-y}\frac{L(n/y)}{L(n)}dy,
\]
and deal with  the integral  over $[0,1]$ and $[1,np_1]$, respectively. First, using that $\esp \wt Z_k^{\alpha_+}\le k\esp Z^{\alpha_+}$, 
\equh\label{eq:01}
\int_0^1 \sif k1 \frac{ (k-y)y^{k-1-\beta} }{k!}\E\wt Z_{k}^{\alpha_+} e^{-y}\frac{L(n/y)}{L(n)}dy \le C
\int_0^1 \sif k1 \frac{ky^{k-1-\beta_+}}{(k-1)!}e^{-y}dy\le
C\int_0^1 y^{-\beta_+}dy \le C,
\eque
for some $\beta_+\in(\beta,1)$, where in the first step we also applied Potter's bound. Second, for the integral over $[1,np_1]$, we 
shall use the identity, for any increasing sequence of numbers 
$\{D_k\}_{k\in\N_0}, D_0 = 0$, 
\equh\label{eq:Olivier}
\sif k1 \frac{(k-y)y^k}{k!}D_k = \sif k0%\sif k0
\frac{y^{k+1}}{k!}(D_{k+1}-D_{k}).
\eque
Then 
\begin{align*}
  \int_1^{np_1}\sif k1 \frac{(k-y)y^{k-1-\beta}}{k!}\esp \wt Z_k^{\alpha_+}e^{-y}\frac{L(n/y)}{L(n)}dy 
  & = \int_1^{np_1}\sif k1 \frac{y^{k-\beta}}{k!}\pp{\esp\wt Z_{k+1}^{\alpha_+}-\esp \wt Z_k^{\alpha_+}}e^{-y}\frac{L(n/y)}{L(n)}dy \\
  &  \le C\int_1^{np_1}\sif k0 \frac{y^{k-
  \beta_-}}{k!}\pp{\esp \wt Z_{k+1}^{\alpha_+}-\esp \wt Z_k^{\alpha_+}}e^{-y}dy\\
  & = C\int_1^{np_1}\sif k1 \frac{(k-y)y^{k-1-\beta_-}}{k!}  
  \esp \wt Z_k^{\alpha_+}
    e^{-y}
  dy,
\end{align*}
for some $\beta_-<\beta$,
where we applied \eqref{eq:Olivier} in the first and the third steps, and Potter's bound in the second.
The last expression above is then bounded from above by
$C\sif k1 p_k\topp{\beta_-} \esp \wt Z_k^{\alpha_+}  = C \esp \wt Z_{Q_{\beta_-}}^{\alpha_+}$.
Combined with \eqref{eq:01}, we have shown \eqref{eq:Q_beta_-}.

\medskip
\noindent 3) We have
\begin{align*}
 E_{3,\delta,n}
 &\le\P\left(\bigcup_{i= 1}^{N(n)}\left\{\varepsilon_{Y_i}Z_i>\kappa a_n ,\, Z_i<\delta\right\}\right)\le \P\left(\bigcup_{i= 1}^{\infty
 }\left\{\varepsilon_{Y_i}>\kappa \delta^{-1}a_n\right\}\right)\\
&\le \P\left(\bigcup_{\ell= 1}^{K_n}\left\{\varepsilon_{\ell}>\kappa \delta^{-1}a_n\right\}\right)
 \le \esp K_n\cdot \wb F_\varepsilon(\kappa \delta^{-1}a_n).
\end{align*}
It then follows that  
$\limsupn E_{3,\delta,n}\le \kappa^{-\alpha}\delta^{\alpha}$ which converges to $0$ as $\delta\to 0$.

\medskip
\noindent 4) This time,
\begin{align*}
 E_{4,\delta,n}
 &\le \P\left(\bigcup_{i=1}^{N(n)}\left\{\varepsilon_{Y_i}>\delta a_n,\, Z_i>\delta^{-2}\right\}\right)\le \P\left(\bigcup_{\ell=1}^{N(n)}\left\{\varepsilon_\ell>\delta a_n,\, \wt Z_{K_{n,\ell}}>\delta^{-2}\right\}\right)\\
 &\le \sif k1\E J_{n,k}\wb F_\varepsilon(\delta a_n)\wb F_{\wt Z_k}(\delta^{-2}) = \esp K_n\cdot\wb F_\varepsilon(\delta a_n) \wb F_{\wt Z_{\what Q_n}}(\delta^{-2}).
\end{align*}
Then, 
$\limsupn E_{4,\delta,n} \le \delta^{-\alpha}\wb F_{\wt Z_{Q_\beta}}(\delta^{-2})\le \delta^\alpha\E\wt Z_{Q_\beta}^\alpha\to 0$ as $\delta\downarrow 0$.
Thus, \eqref{eq:Lambda_approx} is established and the proposition  is proved.
\end{proof}
\subsection{Critical regime}
\label{sec:critical}
Here we assume $\alpha=\alpha'\beta$. We introduce the following technical assumptions before stating the main theorem in this regime.
Recall our notation for $\wt Z_W$ in \eqref{eq:ZW}. In particular, $\wb F_{\wt Z_{Q_\beta}}(x) = \wb F_Z(x)^\beta$, and  in the subscript of $\wb F_{\varepsilon\wt Z_{Q_\beta}}$ below $\varepsilon$ and $\wt Z_{Q_\beta}$ 
  are understood as independent.
We shall need the following assumption that strengthens $\wb F_\varepsilon(x)\in RV_{-\alpha}$ and $\wb F_Z(x)\in 
RV_{-\alpha'}$.
\begin{assumption}\label{assump:critical}~
 \begin{enumerate}[(i)]
\item $\varepsilon$ has a probability density function $x^{-\alpha-1}l_\varepsilon(x)$, that satisfies
\equh\label{eq:roughly_increasing} 
\limsup_{x\to\infty}\sup_{y\in[x_\varepsilon,x]}\frac{l_\varepsilon(y)}{1\vee l_\varepsilon(x)}<\infty \mbox{ for some }x_\varepsilon\ge 0. 
\eque
\item $\wb F_Z(x) = x^{-\alpha'}L_Z(x)$ with  
\equh\label{eq:roughly_increasingZ1}
\limsup_{x\to\infty}\sup_{y\in[x_Z,x]}\frac{L_Z(y)}{1\vee L_Z(x)}<\infty \mbox{ for some }x_Z\ge 0.
\eque
\item As $x\to\infty$, 
\equh\label{eq:new_condition}
\max\ccbb{\wb F_\varepsilon\pp{x(1\wedge L_Z^{-1/\alpha'}(x))}, x^{-\alpha},\wb F_Z(x)^\beta} = o\pp{\wb F_{\varepsilon\wt Z_{Q_\beta}}(x)}.
\eque
\item $\nu(x) \sim Cx^\beta$ for some constant $C\in(0,\infty)$ (i.e.~$L(x)$ in \eqref{eq:nu} has a  limit in $(0,\infty)$ as $x\to\infty$).
\end{enumerate}
\end{assumption}

\begin{theorem}\label{thm:cr}
Under Assumption \ref{assump:critical} with $\alpha = \alpha'\beta$ and $\{b_n\}_{n\in\N}$ satisfying
\equh\label{eq:bn}
\limn\Gamma(1-\beta)\nu(n)\wb F_{\varepsilon \wt Z_{Q_\beta}}(b_n) =1,
\eque
we have
\[
\sum_{i=1}^{N(n)}\ddelta{\varepsilon_{Y_i}Z_i/b_n,U_i}\weakto \sif \ell1\ddelta{S_\beta^{1/\alpha'}\Gamma_\ell^{-1/\alpha'},U_{\ell}}
\]
as $n\to\infty$ in
$\mathfrak M_p((0,\infty]\times E)$,
and
\equh\label{eq:limit_critical}
\frac1{b_n}M_n\weakto S_\beta^{1/\alpha'}\cdot\calM^{\rm is}_{\alpha'}
\eque
as $n\to \infty$ in $\SM(E)$,
where $S_\beta$ is a totally skewed $\beta$-stable random variable, independent from $\calM^{\rm is}_{\alpha'}$. 
\end{theorem}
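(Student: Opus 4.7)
The plan is to prove the point-process convergence via Laplace functionals, exploiting the cluster decomposition of the Poisson--Karlin model; the random sup-measure convergence \eqref{eq:limit_critical} then follows as in the remark after Theorem \ref{thm:DW18}. By Poisson thinning,
\[
\summ i1{N(n)}\ddelta{\varepsilon_{Y_i}Z_i/b_n, U_i}\eqd \sif\ell1 \summ i1{K_{n,\ell}}\ddelta{\varepsilon_\ell Z_{\ell,i}/b_n, U_{\ell,i}},
\]
where $K_{n,\ell}\sim\mathrm{Poisson}(np_\ell)$ are independent across $\ell$ and, conditionally, $\{Z_{\ell,i}\}, \{U_{\ell,i}\}$ are i.i.d.\ copies of $Z$ and of $\mu$-distributed variables respectively. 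The probability generating function of Poisson yields, for any nonnegative continuous $f$ with compact support in $(0,\infty]\times E$,
\[
\Psi_n(f)=\prodd\ell1\infty \esp_\varepsilon\bb{\exp\spp{-np_\ell h_n(\varepsilon)}},\quad h_n(\varepsilon):=\esp_{Z,U}\bb{1-e^{-f(\varepsilon Z/b_n, U)}}.
\]
The target Cox process $\sif\ell1\ddelta{S_\beta^{1/\alpha'}\Gamma_\ell^{-1/\alpha'},U_\ell}$, conditional on $S_\beta$, is Poisson with intensity $S_\beta\alpha' x^{-\alpha'-1}dx\,d\mu(u)$; its Laplace functional equals $\esp[e^{-S_\beta T_f}]=\exp(-T_f^\beta)$ with $T_f:=\int(1-e^{-f(x,u)})\alpha' x^{-\alpha'-1}dx\,d\mu(u)$, using $\esp e^{-sS_\beta}=e^{-s^\beta}$.

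The core task is then $\log\Psi_n(f)\to -T_f^\beta$. Setting $y_\ell:=1-\esp_\varepsilon[\exp(-np_\ell h_n(\varepsilon))]$, I have $\log\Psi_n(f)=-\sif\ell1 y_\ell + O\big(\sif\ell1 y_\ell^2\big)$, and two complementary asymptotics combine to evaluate the leading sum. (a) A Karlin-type integral shows, for fixed $h>0$, $\sif\ell1(1-e^{-np_\ell h})\sim\Gamma(1-\beta)\nu(n)h^\beta$ under Assumption \ref{assump:critical}(iv). (b) Using Assumption \ref{assump:critical}(ii) together with the change of variable $z\mapsto yb_n/\varepsilon$ in the definition of $h_n$, one obtains $h_n(\varepsilon)\sim T_f(\varepsilon/b_n)^{\alpha'}L_Z(b_n/\varepsilon)$ uniformly for $\varepsilon\ll b_n$. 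Combining via Fubini (and using $\alpha=\alpha'\beta$),
\[
\sif\ell1 y_\ell\sim\Gamma(1-\beta)\nu(n)T_f^\beta\,\esp\bb{(\varepsilon/b_n)^\alpha L_Z(b_n/\varepsilon)^\beta};
\]
a further substitution $y=b_n/\varepsilon$ inside the expectation reproduces the leading asymptotic of $\wb F_{\varepsilon\wt Z_{Q_\beta}}(b_n)$, so by the defining relation \eqref{eq:bn} of $b_n$ the right-hand side converges to $T_f^\beta$.

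For the quadratic error, an elementary decomposition according to whether $np_\ell h_n(\varepsilon)\lessgtr 1$ gives $y_\ell\lesssim b_n^{-\alpha}(np_\ell)^\beta$, so $\sif\ell1 y_\ell^2\lesssim b_n^{-2\alpha}n^{2\beta}\sif\ell1 p_\ell^{2\beta}=O((\log n)^{-2})$ since $p_\ell\sim \ell^{-1/\beta}$ makes $\sif\ell1 p_\ell^{2\beta}<\infty$. Throughout, the negligibility condition \eqref{eq:new_condition} in Assumption \ref{assump:critical}(iii) is essential for discarding three distinct error sources: contributions from (1) a single very large $\varepsilon_\ell$ alone, controlled by $x^{-\alpha}$; (2) a single very large $Z_{\ell,i}$ alone, controlled by $\wb F_Z(x)^\beta$; and (3) modified-Breiman corrections coming from the slow variation of $L_Z$, controlled by $\wb F_\varepsilon(x(1\wedge L_Z^{-1/\alpha'}(x)))$. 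The main obstacle is step (b): it is precisely the logarithmic singularity in the tail of $\varepsilon\wt Z_{Q_\beta}$---caused by two tails sharing the common index $\alpha$---that imposes the $\log b_n$ factor in the normalization \eqref{eq:bn} and forces the totally skewed $\beta$-stable $S_\beta$ to appear in the limit, a feature absent in both the signal- and noise-dominance regimes.
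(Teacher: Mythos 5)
Your route is genuinely different from the paper's, and in its skeleton it is cleaner. The paper keeps the partition statistics $J_{n,k}$ random and works through a chain of approximations $\what\Psi_n\to\wt\Psi_n\to\bar\Psi_n\to\Psi_n$ (requiring the uniform bound of Lemma \ref{lem:psi} on $1-\psi_{n,k}$, a variance computation, careful handling of the sign-changing kernel $f_k(y)=(k-y)y^{k-1}e^{-y}$, and a Mittag--Leffler estimate), with the Sibuya generating function $\esp z^{Q_\beta}=1-(1-z)^\beta$ producing the exponent $\beta$. You instead integrate out the Poisson occupancy exactly via the probability generating function, obtaining the \emph{exact} identity $\esp e^{-\eta_n(f)}=\prod_\ell\esp_\varepsilon e^{-np_\ell h_n(\varepsilon)}$; this eliminates the random $J_{n,k}$ entirely and replaces the Sibuya mechanism by the Karlin occupancy asymptotic $\sum_\ell(1-e^{-\lambda p_\ell})\sim\Gamma(1-\beta)\nu(\lambda)$. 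Your identification of the limit Laplace functional $e^{-T_f^\beta}$ and your bound $\sum_\ell y_\ell^2=O((\log n)^{-2})$ are both correct. What your approach buys is the removal of the paper's hardest bookkeeping; what it costs is that all of the delicacy of the critical regime is concentrated into a single asymptotic evaluation of $\esp_\varepsilon\bigl[G(nh_n(\varepsilon))\bigr]$ with $G(\lambda):=\sum_\ell(1-e^{-\lambda p_\ell})$.

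That evaluation is where you have a genuine gap. You invoke the asymptotic $\sum_\ell(1-e^{-np_\ell h})\sim\Gamma(1-\beta)\nu(n)h^\beta$ ``for fixed $h>0$'' and then ``combine via Fubini'' with $h=h_n(\varepsilon)$. But $G(\lambda)\sim\Gamma(1-\beta)C\lambda^\beta$ only as $\lambda\to\infty$, while $G(\lambda)\sim\lambda\ll\lambda^\beta$ as $\lambda\to0$; and for every \emph{fixed} $\varepsilon$ one has $nh_n(\varepsilon)\asymp\varepsilon^{\alpha'}(\log n)^{-1/\beta}\to0$, so the substitution you make is false pointwise in $\varepsilon$. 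The asymptotic you want holds only on the event $\{nh_n(\varepsilon)\to\infty\}$, i.e.\ for $\varepsilon$ growing roughly like a power of $\log n$ times a slowly varying correction, and the argument must show that the region where $nh_n(\varepsilon)$ stays bounded contributes $o(1)$ to $\esp_\varepsilon[G(nh_n(\varepsilon))]$ and to $\Gamma(1-\beta)\nu(n)\esp_\varepsilon[h_n(\varepsilon)^\beta]$ separately. This is exactly where Assumption \ref{assump:critical}(i)--(iii) must enter (the density bound \eqref{eq:roughly_increasing} to control $\esp[\varepsilon^{\alpha'}\ind_{\varepsilon\le y}]$, the condition \eqref{eq:roughly_increasingZ1} for a uniform Potter-type version of $h_n(\varepsilon)\sim T_f\wb F_Z(b_n/\varepsilon)$, and \eqref{eq:new_condition} to kill the contribution of very large $\varepsilon$), yet you cite these assumptions only descriptively at the end rather than using them where they are needed. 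The same issue affects your identification of $\esp[(\varepsilon/b_n)^{\alpha}L_Z(b_n/\varepsilon)^\beta]$ with $\wb F_{\varepsilon\wt Z_{Q_\beta}}(b_n)$, which is again only an asymptotic equivalence requiring negligibility of the large-$\varepsilon$ range. The strategy is repairable and, I believe, sound, but as written the central limit computation is asserted rather than proved.
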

\begin{remark}\label{rem:logistic}
In \eqref{eq:limit_critical},
the limit is known as the {\em $(\alpha,\beta)$-logistic  random sup-measure} on 
the metric space $E$
with control measure $\mu$, denoted by 
$\mabl \eqd S_\beta^{\beta/\alpha}\cdot \calM_{\beta/\alpha}^{\rm is}$ 
below for the discussions.
This random sup-measure has appeared in recent literature \citep{molchanov16max,stoev19exchangeable}  (more details of what follows below can be found therein). 
However, we are unaware of 
any example that $\mabl$ arises from the extremes of a stationary sequence. 
It is an $\alpha$-Fr\'echet random sup-measure, with an equivalent series representation as
\[
\mabl(\cdot) \eqd \sup_{\ell\in\N}J_\ell^{\beta/\alpha}\calM_{\beta/\alpha,\ell}^{\rm is}(\cdot),
\]
where 
\[
\calJ 
:=\sif\ell1 \delta_{J_\ell}\sim \PPP\pp{(0,\infty],\Gamma(1-\beta)\inv\beta x^{-\beta-1}dx},
\]
(corresponding to the jumps of a standard $\beta$-stable subordinator up to time 1; in particular $S_\beta\eqd\sif\ell1 J_\ell$)
 and $\{\calM_{\alpha',\ell}^{\rm is}\}_{\ell\in\N}$ 
 (recall $\alpha' = \alpha/\beta$)
  are i.i.d.~copies of $\calM_{\alpha'}^{\rm is}$, independent from $\calJ$.
Moreover, 
\equh\label{eq:fdd}
\proba\pp{\mabl(A_i)\le x_i, i=1,\dots,d} = \exp\pp{-\pp{\summ i1d \frac{\mu(A_i)}{
x_i^{\alpha/\beta}
}}^\beta},\; x_1,\dots,x_d>0,
\eque
for all disjoint $A_i\in\calE$, and the joint law is known as the multivariate logistic extreme-value distribution.  
This family of distributions was  first considered by \citet{gumbel60bivariate} (see e.g.~\citep{fougeres13dense} for more references and some recent developments).
A combinatorial structure underlying the logistic Fr\'echet random sup-measure was recently pointed out in \citep[Remark 3.5]{stoev19exchangeable}, where the name {\em sub-max-stable} was also used (in parallel to sub-stable processes \citep{samorodnitsky94stable}).
We list some properties of the logistic random sup-measures here. 
From \eqref{eq:fdd}, it is immediately seen that $\mabl$ is exchangeable in the sense that $\{\mabl(A_i)\}_{i=1,\dots,d}$ have the same joint law for all disjoint $\{A_i\}_{i=1,\dots,d}$ with the same values $\{\mu(A_i)\}_{i=1,\dots,d}$; when defined on $\Rd$ with $\mu$ being the Lebesgue measure, it is also translation-invariant and self-similar in the usual sense. 
We also mention the following relation that is close to (but not) an invariance property.
For $\gamma,\beta\in(0,1), \alpha>0$, 
\[
S_\gamma^{1/\alpha}\cdot\mabl \eqd \calM_{\alpha\gamma,\beta\gamma}^{\rm lo},
\]
which follows from \eqref{eq:fdd} by conditioning on $S_\gamma$ first (the skewed $\beta$-stable random variable $S_\gamma$ is independent from $\mabl$).  Some simulation examples are provided in Figure \ref{fig:loRSM}. 
\end{remark}

\begin{remark}\label{rem:lL}
The assumption  $\alpha = \alpha'\beta$
 says that $\wb F_\varepsilon(x),\, \wb F_{\wt Z_{Q_\beta}}(x) \in RV_{-\alpha}$, which then implies that  $\wb F_{\varepsilon \wt Z_{Q_\beta}}(x)\in RV_{-\alpha}$; if in addition $\esp \varepsilon^\alpha=\infty$ and $\esp \wt Z_{Q_\beta}
^\alpha
 = \infty$, then 
$\wb F_\varepsilon(x),\, \wb F_{\wt Z_{Q_\beta}}(x) = o(\wb F_{\varepsilon\wt Z_{Q_\beta}}(x))$ (see e.g.~\citep{jessen06regularly}). The latter is slightly weaker than the assumption \eqref{eq:new_condition} which, 
in the presence of \eqref{eq:roughly_increasing} and \eqref{eq:roughly_increasingZ1}, 
is simplified as follows:
\[
\begin{cases}
\wb F_\varepsilon\pp{x(1\wedge L_Z^{-1/\alpha'}(x))} = o\pp{\wb F_{\varepsilon \wt Z_{Q_\beta}}(x)}, & \mbox{ if } \liminf_{x\to\infty}l_\varepsilon(x)>0,\\
\wb F_Z(x)^\beta = o\pp{\wb F_{\varepsilon \wt Z_{Q_\beta}}(x)}, & 
\mbox{ if } \lim_{x\to\infty}l_\varepsilon(x)=0, \liminf_{x\to\infty}L_Z(x)>0,\\
x^{-\alpha}   = o\pp{\wb F_{\varepsilon \wt Z_{Q_\beta}}(x)},& \mbox{ if } \lim_{x\to\infty}l_\varepsilon(x) = 0, \lim_{x\to\infty}L_Z(x) = 0.
\end{cases}
\]
Indeed, it suffices to express
\[
\wb F_\varepsilon\pp{x(1\wedge L_Z^{-1/\alpha'}(x))}\sim \max\ccbb{x^{-\alpha}, \wb F_Z(x)^\beta}\cdot \alpha l_\varepsilon\pp{x(1\wedge L_Z^{-1/\alpha'}(x))}.
\]
\end{remark}

\begin{remark}
Assume $\nu(x)\sim Cx^{\beta}$. 
if both $\varepsilon$ and $Z$ have densities that
 are asymptotically power laws (so that $l_\varepsilon, L_Z$ each has a limit in $(0,\infty)$), then \eqref{eq:roughly_increasing}, \eqref{eq:roughly_increasingZ1} and \eqref{eq:new_condition} hold, and more precisely we have, for some constants $C_1,C_2,C_3>0$,
\[
%\equh\label{eq:tails}
\wb F_\varepsilon(x)\sim C_1\wb F_Z(x)^\beta = C_1\wb F_{\wt Z_{Q_\beta}}(x) \sim C_2x^{-\alpha}\qmand \wb F_{\varepsilon \wt Z_{Q_\beta}}(x) 
 \sim C_3x^{-\alpha}\log x.
\]%\eque

For our proof, the assumption $\nu(x)\sim Cx^\beta$ cannot be relaxed. 
Assumption \eqref{eq:roughly_increasing}  relaxes the asymptotic power-law behavior of the density. 
(A similar 
comment
 applies to \eqref{eq:roughly_increasingZ1} and $L_Z$.)
 In the case $l_\varepsilon(x)\to\infty$, \eqref{eq:new_condition} in addition restricts  $l_\varepsilon$ from increasing too fast.  
As an example, in the special case $L_Z(x) = l_\varepsilon(x)^{1/\beta}$, \eqref{eq:new_condition} becomes
\equh\label{eq:new_condition1}
\wb F_\varepsilon(xl_\varepsilon^{-1/\alpha}(x)) \sim \alpha\inv \wb F_\varepsilon(x)l_\varepsilon(x l_\varepsilon^{-1/\alpha}(x)) = o\pp{\wb F_{\varepsilon\varepsilon'}(x)}, 
\eque
where $\varepsilon'$ is an independent copy of $\varepsilon$ 
and it can be verified that the condition \eqref{eq:new_condition1} remains satisfied with for example $l_\varepsilon(x)\sim C\log ^\gamma x$ for any $\gamma>0$. 
For an example that violates \eqref{eq:new_condition1}, consider $l_\varepsilon(x) = L_\gamma(x) = c_0\exp(\log^\gamma x)$ for $x\ge1$ and 0 otherwise. It is known that for $\theta\in\R$ \citep[P.303]{bojanic71slowly}
\[
\lim_{x\to\infty}\frac{L_\gamma(xL_\gamma^\theta(x)) }{L_\gamma(x)} = \begin{cases}
1 & \mbox{ if } \gamma\in(0,1/2),\\
\exp(\theta\gamma) & \mbox{ if } \gamma = 1/2.
\end{cases}
\]
So, with $\gamma\in(0,1/2]$, $\wb F_\varepsilon(xl_\varepsilon^{-1/\alpha}(x)) \sim C \wb F_\varepsilon (x)l_\varepsilon(x)$. On the other hand, we have
$L_\gamma(y)L_\gamma(x/y)\le CL_\gamma^{2^{1-\gamma}}(x)$ for all $y\in(0,x)$,
$x>1$ (since $a^\gamma+b^\gamma\le 2^{1-\gamma}(a+b)^\gamma$ for $a,b>0,\gamma\in(0,1]$), and hence
\begin{align*}
\wb F_{\varepsilon\varepsilon'}(x)& \le \int_{1}^x y^{-\alpha-1}l_\varepsilon(y)\wb F_\varepsilon(x/y)dy + \wb F_\varepsilon(x)
\le C x^{-\alpha}\int_{1}^xy\inv l_\varepsilon(y)l_\varepsilon(x/y)dy + \wb F_\varepsilon(x) \\
&\le C\wb F_\varepsilon(x)l_\varepsilon(x)\left( l_\varepsilon^{2^{1-\gamma}-2}(x)\log x + l_\varepsilon(x)^{-1}\right)
= o(\wb F_\varepsilon(xl_\varepsilon^{-1/\alpha}(x))).
\end{align*}
\end{remark}

In preparation for the proof of Theorem~\ref{thm:cr}, we introduce
the point 
processes on $(0,\infty]$,
\equh\label{eq:eta}
 \eta_n:=\sum_{i=1}^{N(n)}\delta_{\varepsilon_{Y_i}Z_i/b_n} \qmand \eta:=\sif \ell1\delta_{S_\beta^{1/\alpha'}\Gamma_\ell^{-1/\alpha'}}.
\eque
Again we omit the variables $U$ for the locations. Let $f$ be a continuous non-negative function with compact support in $[\kappa,\infty]$, $\kappa>0$.
Write $\eta_n(f) = \int fd\eta_n$ and similarly for $\eta(f)$. The goal is to show
\[
\limn \E  e^{-\eta_n(f)}= \esp e^{-\eta(f)} 
=\exp\pp{-\Cabf}
\mwith \Cabf = \left(\int_0^\infty (1-e^{- f(v)})\alpha'v^{-\alpha'-1}dv\right)^\beta.
\]
Note that for the proof 
of Theorem~\ref{thm:cr}
we proceed by computing the Laplace functional instead of checking the widely applicable condition due to Kallenberg (\citep[Theorem 4.18]{kallenberg17random}, 
 \citep[Proposition 3.22]{resnick87extreme}), which consists of checking the convergence of 
 $\proba(\eta_n([x,\infty]) = 0)$ and 
 $\esp \eta_n([x,\infty])$ 
 for all $x>0$.  
 The reason that this method does not apply here is that in the limit, we have 
 $\esp\eta([x,\infty]) = \esp (x^{-\alpha'}S_\beta) =\infty$, violating one of the assumptions.

We have,
\begin{align*}
 \E e^{-\eta_n(f)}
&=\E\left(\prod_{i=1}^{N(n)} \exp\left(-f(\varepsilon_{Y_i}Z_i/b_n)\right) \right)=\E\left(\E\left( \prodd k1\infty \prod_{\ell :  K_{n,\ell}=k}  \exp\left(- \sum_{i=1}^k f(\varepsilon_{\ell}Z_{\ell,i}/b_n)\right)\mmid \cY\right)\right)\\
&=\E\left(\prodd k1\infty \prod_{\ell :  K_{n,\ell}=k}  \E\left( \exp\left(- \sum_{i=1}^k f(\varepsilon_{\ell}Z_{\ell,i}/b_n)\right)\right)\right).
\end{align*}
Therefore, recalling that $J_{n,k}=\sif\ell1\inddd{K_{n,\ell} = k}$ and writing that
\[
\psi_{n,k}\equiv \psi_{n,k}(f):=\E \exp\left(- \sum_{i=1}^k f(\varepsilon_{1}Z_{i}/b_n)\right),
\]
we infer that
\[
 \E e^{-\eta_n(f)}
=\E\left(\prod_{k=1}^\infty\left(\E \exp\left(- \sum_{i=1}^k f(\varepsilon_{1}Z_{i}/b_n)\right)\right)^{J_{n,k}}\right)=\E\exp\left(\sif k1 J_{n,k}\log\psi_{n,k}\right).
\]
The proof proceeds by a series of approximations. Consider
\begin{align*} 
\what\Psi_n(f) &:= -\sif k1 J_{n,k}\log\psi_{n,k}, \\
 \wt\Psi_n(f) &:= \sif k1 J_{n,k}(1-\psi_{n,k}), \\
 \bar\Psi_n(f) &:= \sif k1 \E J_{n,k} (1- \psi_{n,k}),\\
 \Psi_n(f) &:= \Gamma(1-\beta)\nu(n)\sif k1 p_k^{(\beta)} (1- \psi_{n,k}).
\end{align*}
Heuristically, the approximation makes sense as {\em for every $k$ fixed}, $\psi_{n,k}\to 1$ and hence $\log \psi_{n,k}\sim \psi_{n,k}-1$, whereas $J_{n,k}$, $\esp J_{n,k}$ and $\Gamma(1-\beta)\nu(n)p_k\topp\beta$ are asymptotically equivalent (recall expressions of the last two in \eqref{eq:EJ_nk} and \eqref{eq:pk_beta}). The uniform control  in $k$ of these equivalences, in an appropriate sense, turned out to be quite involved.

We start with the relatively easy part that $\limn\E e^{-\Psi_n(f)}= \esp e^{-\eta(f)}$, as the following lemma shows. 
Note that here we need slightly weaker assumptions on $\varepsilon$ and $Z$ than Assumption \ref{assump:critical} (see Remark \ref{rem:lL}). 
\begin{lemma}\label{lem:psi_n}
For $\eta$ given as in \eqref{eq:eta},
\[
\esp e^{-\eta(f)} =  e^{-\Cabf}.
\]
If $\nu(x)\sim Cx^\beta$ for some $C\in(0,\infty)$, $\wb F_\varepsilon(x)\in RV_{-\alpha}, \wb F_Z(x)\in RV_{-\alpha'}$, and $\wb F_\varepsilon(x) = o(\wb F_{\varepsilon \wt Z_{Q_\beta}}(x))$, then with $b_n$ as in \eqref{eq:bn},
\[
\limn \Psi_n(f) =   \Cabf.
\]
\end{lemma}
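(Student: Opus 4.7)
The proof splits in two. For the identity $\esp e^{-\eta(f)}=e^{-\Cabf}$, I would condition on $S_\beta$. Given $S_\beta$, the points $\{S_\beta^{1/\alpha'}\Gamma_\ell^{-1/\alpha'}\}_{\ell\ge1}$ form a Poisson point process on $(0,\infty)$ with intensity $S_\beta\cdot\alpha' v^{-\alpha'-1}dv$, since scaling a $\PPP(\alpha' v^{-\alpha'-1}dv)$ by $S_\beta^{1/\alpha'}$ multiplies the intensity by $S_\beta$. Campbell's formula then gives $\esp(e^{-\eta(f)}\mmid S_\beta)=\exp(-S_\beta A(f))$ with $A(f):=\int_0^\infty(1-e^{-f(v)})\alpha' v^{-\alpha'-1}dv$, and the Laplace exponent $\esp e^{-\lambda S_\beta}=e^{-\lambda^\beta}$ of the totally skewed $\beta$-stable law produces $\esp e^{-\eta(f)}=e^{-A(f)^\beta}=e^{-\Cabf}$.

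For $\Psi_n(f)\to\Cabf$, I first rewrite $\Psi_n$ using the Sibuya generating function identity $\esp y^{Q_\beta}=1-(1-y)^\beta$: with $\phi_n(x):=\esp e^{-f(xZ/b_n)}$, this gives $\sum_k p_k^{(\beta)}(1-\phi_n(x)^k)=(1-\phi_n(x))^\beta=g_n(x)^\beta$, where $g_n(x):=1-\phi_n(x)=\esp(1-e^{-f(xZ/b_n)})$. Since $\esp \phi_n(\varepsilon_1)^k=\psi_{n,k}$, taking expectation over $\varepsilon_1$ yields
\[
\Psi_n(f)=\Gamma(1-\beta)\nu(n)\,\esp g_n(\varepsilon_1)^\beta.
\]
Next I would establish the pointwise asymptotic $g_n(x)\sim A(f)\,\wb F_Z(b_n/x)$ as $b_n/x\to\infty$. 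Writing $h:=1-e^{-f}$ (WLOG smooth, with compact support $[\kappa,M]\subset(0,\infty)$ and $h(\kappa)=0$), integration by parts gives $g_n(x)=\int_\kappa^M h'(v)\wb F_Z(vb_n/x)\,dv$, and the uniform convergence theorem for regularly varying functions on compacts \citep{bingham87regular} yields $g_n(x)/\wb F_Z(b_n/x)\to\alpha'\int_\kappa^M h(v)v^{-\alpha'-1}\,dv=A(f)$.

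The final step is to upgrade the pointwise asymptotic to $\esp g_n(\varepsilon_1)^\beta\sim A(f)^\beta\wb F_{\varepsilon\wt Z_{Q_\beta}}(b_n)$, using the exact identity $\esp\wb F_{\wt Z_{Q_\beta}}(b_n/\varepsilon_1)=\wb F_{\varepsilon\wt Z_{Q_\beta}}(b_n)$ together with $\wb F_{\wt Z_{Q_\beta}}=\wb F_Z^\beta$. For this I would split the expectation according to whether $\varepsilon_1\le b_n/\delta$ or not. On $\{\varepsilon_1\le b_n/\delta\}$ the asymptotic of $g_n$ is uniform (for $\delta$ large), yielding the main term via a squeeze; on $\{\varepsilon_1>b_n/\delta\}$ the trivial bound $g_n\le\|h\|_\infty$ gives
\[
\esp g_n(\varepsilon_1)^\beta\indd{\varepsilon_1>b_n/\delta}\le\|h\|_\infty^\beta\wb F_\varepsilon(b_n/\delta)\sim\|h\|_\infty^\beta\delta^\alpha\wb F_\varepsilon(b_n)=o(\wb F_{\varepsilon\wt Z_{Q_\beta}}(b_n))
\]
by the standing hypothesis $\wb F_\varepsilon=o(\wb F_{\varepsilon\wt Z_{Q_\beta}})$. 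Letting $n\to\infty$ then $\delta\to\infty$, and using \eqref{eq:bn}, we conclude $\Psi_n(f)\to A(f)^\beta=\Cabf$. This dominated-convergence step is the crux of the proof: at the critical exponent $\alpha=\alpha'\beta$ both $\esp\varepsilon^\alpha$ and $\esp\wt Z_{Q_\beta}^\alpha$ diverge so Breiman's lemma is unavailable, and the hypothesis $\wb F_\varepsilon=o(\wb F_{\varepsilon\wt Z_{Q_\beta}})$ is precisely what renders the large-$\varepsilon_1$ regime negligible in the appropriate scale.
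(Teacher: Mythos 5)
Your proposal is correct and follows essentially the same route as the paper: the Sibuya generating function identity reduces $\Psi_n(f)$ to $\Gamma(1-\beta)\nu(n)\int_0^\infty\bigl(1-\esp e^{-f(xZ/b_n)}\bigr)^\beta dF_\varepsilon(x)$, the inner expectation is shown to be asymptotically $A(f)\wb F_Z(b_n/x)$ uniformly on $\{x\le \mathrm{const}\cdot b_n\}$, and the remaining range is killed by the hypothesis $\wb F_\varepsilon = o(\wb F_{\varepsilon\wt Z_{Q_\beta}})$ exactly as in the paper's split at $x = d_\epsilon b_n$. The only cosmetic differences are that you invoke Campbell's formula directly for the first identity (the paper restricts to $[\kappa,\infty]$ and uses a Poisson number of uniforms) and you phrase the uniform asymptotic via integration by parts rather than the paper's conditional variable $Z_{n,x}$.
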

\begin{proof}
Conditionally on $S_\beta$, express points from $\eta$ that are in the intervals $[\kappa,\infty]$ as $\summ i1{N_*}\delta_{U_{i}^{-1/\alpha'}\kappa}$: then $N_*$ is Poisson distributed with parameter $S_\beta\kappa^{-\alpha'}$, and $\{U_{i}\}_{i\in\N}$ are i.i.d.~random variables uniformly distributed over $(0,1)$. So,
\begin{align*}
\esp e^{-\eta(f)} & = \esp \pp{\esp\exp\pp{-
\summ i1{N_*}f(U_i^{-1/\alpha'}\kappa)}\mmid S_\beta} = \esp \exp\pp{\frac{S_\beta}{\kappa^{\alpha'}}\pp{\esp e^{-f(U^{-1/\alpha'}\kappa)}-1}}\\
& = \exp\pp{-\pp{\frac1{\kappa^{\alpha'}}\int_1^\infty
(1-e^{-f(u\kappa)})\alpha' u^{-\alpha'-1}du}^\beta} = e^{-\Cabf}.
\end{align*}
For the second part, we start by writing
\[
 \sif k1 p_k^{(\beta)} (1-\psi_{n,k})
= \sif k1  p_k^{(\beta)}  \left( 1- \E\bb{\E\left(e^{- f(\varepsilon Z/b_n)}\mmid \varepsilon \right)}^k\right)=1- \E\left(\E\left(e^{- f(\varepsilon Z/b_n)}\mmid \varepsilon \right)^{Q_\beta} \right),
\]
where $Q_\beta$ a Sibuya random variable ($\proba(Q_\beta = k) = p_k\topp\beta$), independent of all the rest.
Using $\esp z^{Q_\beta} = 1-(1-z)^\beta$ for $z\in[0,1]$, we get
\[
\sif k1 p_k^{(\beta)} (1-\psi_{n,k})
= \E{\left(1- \E\left(e^{- f(\varepsilon Z/b_n)}\mmid \varepsilon \right)\right)^\beta}=\int_0^\infty \left(1-\esp e^{- f(x Z/b_n)}\right)^\beta dF_\varepsilon(x).
\]
Introduce $Z_{n,x}$ as a random variable with law  determined by
\[
\proba(Z_{n,x}>y) = \proba\pp{Z >\frac{\kappa b_n}x \cdot y\mmid Z>\frac{\kappa b_n}x},
\quad y \ge 1,
\]
and
\[
%\equh\label{eq:a_nx}
a_{n,x}(f) :=1-\esp e^{-f(\kappa
Z_{n,x})}.
\]%\eque
So we have (recalling that $f$ is supported over $[\kappa,\infty]$)
\equh
 1-\esp e^{-f(xZ/b_n)} = a_{n,x}(f)\cdot \wb F_Z\pp{\kappa b_n/x}.\label{eq:2}
\eque
It follows from $\wb F_Z(x)\in RV_{-\alpha'}$ that, for every $x>0$ fixed, 
\[
\limn a_{n,x}(f)=\int_1^\infty(1-e^{- f(\kappa v)})\alpha'v^{-\alpha'-1}dv = \kappa^{\alpha'}\int_0^\infty (1-e^{- f(v)})\alpha'v^{-\alpha'-1}dv= 
\pp{\kappa^\alpha\Cabf}^{1/\beta},
\]
and for all $\epsilon>0$ we can take $d_\epsilon>0$ small enough so that 
\[
\limsupn\sup_{x\in(0,d_\epsilon b_n]}\abs{a_{n,x}^\beta(f) - \kappa^\alpha\Cabf} \le \epsilon.
\]
We then have
\begin{multline*}
\abs{
\int_0^\infty \wb F_{\wt Z_{Q_\beta}}(\kappa b_n/x) a_{n,x}^\beta(f)  dF_\varepsilon(x)
- \kappa^\alpha \Cabf\wb F_{\varepsilon\wt Z_{Q_\beta}}(\kappa b_n)}\\
\le \int_0^{d_\epsilon b_n}\wb F_{\wt Z_{Q_\beta}}(\kappa b_n/x)\abs{a_{n,x}^\beta(f)-\kappa^\alpha\Cabf}dF_\varepsilon(x) + (1+\kappa ^\alpha\Cabf)\wb F_\varepsilon(d_\epsilon b_n). 
\end{multline*}
The first term on the right-hand side is bounded by, for $n$ large enough, $2\epsilon \wb F_{\varepsilon\wt Z_{Q_\beta}}(\kappa b_n)$, and the second by $C\wb F_\varepsilon(b_n) = o(\wb F_{\varepsilon\wt Z_{Q_\beta}}(b_n))$. Since $\epsilon>0$ can be arbitrarily small, the above implies that
\[
\int_0^\infty \wb F_{\wt Z_{Q_\beta}}(\kappa b_n/x) a_{n,x}^\beta(f)  dF_\varepsilon(x)
\sim \kappa^\alpha \Cabf\wb F_{\varepsilon\wt Z_{Q_\beta}}(\kappa b_n)\sim \Cabf\wb F_{\varepsilon \wt Z_{Q_\beta}}(b_n). 
\]
To sum up,
\begin{align*}
\Psi_n(f) & = \Gamma(1-\beta)\nu(n)\int_0^\infty \left(1-\esp e^{- f(x Z_{1}/b_n)}\right)^\beta dF_\varepsilon(x) \\
& = \Gamma(1-\beta)\nu(n)\int_0^\infty \wb F_{\wt Z_{Q_\beta}}(\kappa b_n/x) a_{n,x}^\beta(f)  dF_\varepsilon(x)
\sim  \Cabf\cdot\Gamma(1-\beta)\nu(n) \wb F_{\varepsilon\wt Z_{Q_\beta}}(b_n).
\end{align*}
The desired result now follows from \eqref{eq:bn}.
\end{proof}
The hard part of the proof lies in approximating $\what \Psi_n$ by $\Psi_n$, where we shall need a very fine control of $1-\psi_{n,k}$. For this purpose, introduce
\equh\label{eq:wt_bn}
\wt b_n := b_n \pp{1\wedge L_Z^{-1/\alpha'}(b_n)} \qmand \wb F_\varepsilon^*(\wt b_n) := \wb F_\varepsilon(\wt b_n)\vee \wt b_n^{-\alpha}.
\eque
The key of the analysis is the following Lemma \ref{lem:psi}.
\begin{lemma}\label{lem:psi}
Under Assumption \ref{assump:critical}, there exists a constant $C>0$ such that for all $n$ large enough, 
\equh\label{eq:1-psi}
1-\psi_{n,k}\le \bb{C \pp{k^\beta\wb F_\varepsilon^*(\wt b_n) + k \wb F_Z(b_n)}}\wedge 1  \; \mfa k\in\N.
\eque
\end{lemma}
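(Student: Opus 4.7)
Since $f\ge 0$ vanishes on $(0,\kappa]$, the elementary inequality $1-e^{-X}\le\inddd{X>0}$ yields
\[
1-\psi_{n,k}\le \P\pp{\exists\, i\le k : \varepsilon Z_i\ge \kappa b_n}=\P(\varepsilon \wt Z_k\ge \kappa b_n).
\]
Together with the trivial bound $1-\psi_{n,k}\le 1$, it suffices to establish
\equh\label{eq:target_pf}
\P(\varepsilon \wt Z_k\ge \kappa b_n)\le C\pp{k^\beta \wb F_\varepsilon^*(\wt b_n) + k\wb F_Z(b_n)}
\eque
with a constant $C$ independent of $k$, for all $n$ large enough. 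Conditioning on $\varepsilon$ and using that the $Z_i$'s are i.i.d.\ together with Bernoulli's inequality $1-(1-p)^k\le kp$,
\[
\P(\varepsilon \wt Z_k\ge \kappa b_n) = \esp\bb{1-F_Z(\kappa b_n/\varepsilon)^k}\le \int_0^\infty \bb{1\wedge k\wb F_Z(\kappa b_n/x)}dF_\varepsilon(x).
\]

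The plan is then to split the domain of integration at $x=\wt b_n$: the tail piece $\int_{\wt b_n}^\infty$ is bounded by $\wb F_\varepsilon(\wt b_n)\le \wb F_\varepsilon^*(\wt b_n)\le k^\beta \wb F_\varepsilon^*(\wt b_n)$. For the bulk piece on $(0,\wt b_n]$, introduce the threshold $y_k\in(0,\infty)$ at which $k\wb F_Z(\kappa b_n/y_k)=1$; when $y_k<\wt b_n$, further split at $y_k$, using the trivial bound $1$ on $(y_k,\wt b_n]$ (which contributes at most $\wb F_\varepsilon(y_k)$) and the bound $k\wb F_Z(\kappa b_n/x)$ on $(0,y_k]$.

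The contribution $\wb F_\varepsilon(y_k)$ is controlled by regular variation: with $\alpha=\alpha'\beta$ one has $y_k^{-\alpha}\asymp k^\beta b_n^{-\alpha}L_Z^\beta(\kappa b_n/y_k)$, and assumption~\eqref{eq:roughly_increasingZ1} combined with the design of $\wt b_n=b_n(1\wedge L_Z^{-1/\alpha'}(b_n))$ yields $\wb F_\varepsilon(y_k)\le Ck^\beta \wb F_\varepsilon^*(\wt b_n)$. For the remaining integral $k\int_0^{\wt b_n\wedge y_k}\wb F_Z(\kappa b_n/x)f_\varepsilon(x)dx$, the change of variables $u=\kappa b_n/x$ together with $f_\varepsilon(x)=x^{-\alpha-1}l_\varepsilon(x)$ and assumption~\eqref{eq:roughly_increasing} bounds it by
\[
Ckb_n^{-\alpha}\pp{1\vee l_\varepsilon(\wt b_n)}\int_A^\infty u^{\alpha-1}\wb F_Z(u)du,\qquad A:=(\kappa b_n/\wt b_n)\vee (\kappa b_n/y_k),
\]
and Karamata's theorem (applicable since $\alpha-1-\alpha'<-1$) evaluates the integral to $\asymp A^\alpha \wb F_Z(A)$; a case analysis based on whether $A=\kappa b_n/\wt b_n$ or $A=\kappa b_n/y_k$ shows this last piece is dominated by $C(k\wb F_Z(b_n) + k^\beta \wb F_\varepsilon^*(\wt b_n))$.

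\textbf{The main obstacle} lies in the uniform-in-$k$ bookkeeping of the slowly varying factors $L_Z$ and $l_\varepsilon$ at the various scales $b_n,\wt b_n,y_k,\kappa b_n/y_k$. The precise shape of $\wt b_n$ and the $\vee\,\wt b_n^{-\alpha}$ appearing in the definition of $\wb F_\varepsilon^*(\wt b_n)$ are tailored to absorb the factor $L_Z^\beta(b_n)$ uniformly across the regimes $L_Z(b_n)\ge 1$ and $L_Z(b_n)<1$; similarly, the ``roughly increasing'' conditions~\eqref{eq:roughly_increasing} and~\eqref{eq:roughly_increasingZ1} are crucial to replace $l_\varepsilon$ and $L_Z$ inside the integrals by their values at $\wt b_n$ and $b_n$, respectively, uniformly in $k$.
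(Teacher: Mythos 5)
Your argument is, at bottom, the paper's own proof reorganized: the paper likewise splits the $\varepsilon$-integral at $x_\varepsilon$ and at a constant multiple of $\wt b_n$, uses \eqref{eq:roughly_increasing}--\eqref{eq:roughly_increasingZ1} to freeze $l_\varepsilon$ and $L_Z$ at the scales $\wt b_n$ and $b_n$, and extracts the factor $k^\beta$ from $\frac1\beta+kB(k,1-\beta)\le Ck^\beta$ after the substitution $u=(x/\what b_n)^{\alpha'}$ --- which is exactly your level-set computation $\int_0^1(1\wedge ku)u^{-\beta-1}du\asymp k^\beta$, since $1-(1-p)^k$ and $1\wedge kp$ agree up to constants. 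Your opening reduction $1-\psi_{n,k}\le\P(\varepsilon\wt Z_k\ge\kappa b_n)$ is a harmless simplification (the paper keeps $a_{n,x}(f)\le 1$ instead).

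One step of your sketch, however, is not justified as written. The paper first bounds $a_{n,x}(f)\wb F_Z(\kappa b_n/x)\le \what a\,(x/\wt b_n)^{\alpha'}$ uniformly over $x\in[x_\varepsilon,r\wt b_n]$, which only requires controlling $L_Z$ on an interval of the form $[\mathrm{const},Cb_n]$, where \eqref{eq:roughly_increasingZ1} applies; the level set of the resulting pure power law then sits at a clean power of $k$. You instead evaluate $L_Z$ at the genuine level-set point $\kappa b_n/y_k$, and for $k$ growing faster than any power of $b_n$ this point exceeds $b_n$ by an arbitrary amount, where \eqref{eq:roughly_increasingZ1} gives no control: it bounds $L_Z$ at points \emph{below} $x$ by $1\vee L_Z(x)$, not above. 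For instance, with $L_Z(u)=\log u$ one has $L_Z(\kappa b_n/y_k)\asymp\log k$, which is not $O(1\vee L_Z(b_n))$ uniformly in $k$, so your claim that \eqref{eq:roughly_increasingZ1} ``yields $\wb F_\varepsilon(y_k)\le Ck^\beta\wb F_\varepsilon^*(\wt b_n)$'' does not follow as stated. The repair is cheap but must be made explicit: when $k^\beta\wb F_\varepsilon^*(\wt b_n)\ge 1$ the asserted bound \eqref{eq:1-psi} is trivially true because of the $\wedge\,1$; in the complementary range $k\le C\wb F_\varepsilon^*(\wt b_n)^{-1/\beta}\le C\wt b_n^{\alpha'}$, regular variation of $\wb F_Z$ forces $\kappa b_n/y_k\le Cb_n$, and only then does \eqref{eq:roughly_increasingZ1} apply. (Alternatively, replace $\wb F_Z(\kappa b_n/x)$ by the pure power $\what a(x/\wt b_n)^{\alpha'}$ \emph{before} locating the level set, as the paper does.) A second, minor point: your $u$-integral runs to $\infty$, i.e.\ down to $x=0$, but \eqref{eq:roughly_increasing} says nothing about $l_\varepsilon$ below $x_\varepsilon$; you must cut at $x=x_\varepsilon$ and bound the piece $x\le x_\varepsilon$ by $k\wb F_Z(\kappa b_n/x_\varepsilon)\le Ck\wb F_Z(b_n)$ --- this is in fact where the term $k\wb F_Z(b_n)$ in \eqref{eq:1-psi} comes from.
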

\begin{proof}
We have, by \eqref{eq:2},
\[
1- \psi_{n,k}
 =\int_0^\infty1-\pp{ \E e^{-f(xZ/b_n)}}^kdF_\varepsilon(x)
=\int_0^\infty1-\pp{1-a_{n,x}(f)\wb F_Z\pp{\kappa b_n/x}}^kd F_\varepsilon(x).
\]
Pick $r = 1\wedge(\kappa/x_Z)$ (recall \eqref{eq:roughly_increasingZ1} for $x_Z$). 
Then, the integration over $x>r\wt b_n$ is bounded from above by
$\wb F_\varepsilon(r\wt b_n)  \sim Cr^{-\alpha}\wb F_\varepsilon(\wt b_n)$, and this term can be bounded by, for another constant $C$ large enough,  $Ck^\beta\wb F_\varepsilon(\wt b_n)\le Ck^\beta\wb F_\varepsilon^*(\wt b_n)$ for all $k\in\N$. 
Therefore it suffices to show that integration over $x\in[0,r\wt b_n]$ is of the desired order. 
Recall $x_\varepsilon$ from \eqref{eq:roughly_increasing}.
Then, 
\[
\int_0^{x_\varepsilon}1-\pp{1-a_{n,x}(f)\wb F_Z(\kappa b_n/x)}^kdF_\varepsilon(x)\le Ck \wb F_Z(\kappa b_n/x_\varepsilon) \le Ck\wb F_Z(b_n).  
\]
For the interval $[x_\varepsilon,r\wt b_n]$, we observe that by our choice of $x_z$
in \eqref{eq:roughly_increasingZ1} 
for $n$ large enough
\[
 \sup_{x\in[x_\varepsilon,r \wt b_n]} L_Z(\kappa b_n/x)=\sup_{x\in \bb{(x_z\vee \kappa)\frac{b_n}{\wt b_n},\frac\kappa {x_\varepsilon} b_n}} L_Z(x) \le C (1\vee L_Z(b_n)),
\]
and thus 
for all $x\in[x_\varepsilon,r \wt b_n]$, $a_{n,x}(f)\wb F_Z(\kappa b_n/x)\le \what a (x/\wt b_n)^{\alpha'}$ for some constant $\what a>0$.
Introduce
\[
u = (x/\what b_n)^{\alpha'}\qmwith \what b_n := \what a ^{-1/\alpha'}\wt b_n.
\]
We then arrive at  (we also need $n$ large enough so that $\what a(x/\wt b_n)^{\alpha'}<1$),
\begin{align}
\int_{x_\varepsilon}^{r\wt b_n}1-\pp{1-a_{n,x}(f)\wb F_Z(\kappa b_n/x)}^kdF_\varepsilon(x)  & \le
\int_{x_\varepsilon}^{r\wt b_n}1-\pp{1-\what a (x/\wt b_n)^{\alpha'}}^kdF_\varepsilon(x)\nonumber
\\
& \le  C\int_{ ({x_\varepsilon}/\what b_n)^{\alpha'}}^{ (r\wt b_n/\what b_n)^{\alpha'}}\pp{1-(1-u)^k }
\cdot
 dF_\varepsilon\pp{ u^{1/\alpha'}\what b_n}.\label{eq:5}
\end{align}
Write $dF_\varepsilon(x) = f_\varepsilon(x)dx = x^{-\alpha-1}l_\epsilon(x)dx$.
By \eqref{eq:roughly_increasing}, we have, for $u$ in the domain of the integral above,
\[
f_\varepsilon\pp{u^{1/\alpha'}\what b_n} d\pp{u^{1/\alpha'}\what b_n} \le Cu^{-\beta-1} \wt b_n^{-\alpha}\pp{l_\varepsilon(\wt b_n)\vee 1}du.
\]
So \eqref{eq:5} is bounded from above by, uniformly for all $k\in\N$ and $n$ large enough,
\[
C\pp{\wt F_\varepsilon(\wt b_n)\vee \wt b_n^{-\alpha}}\int_0^1\pp{1-(1-u)^k}u^{-\beta-1}du
 =
C\wb F_\varepsilon^*(\wt b_n)\pp{\frac 1\beta+ kB(k,1-\beta)} \le
C\wb F_\varepsilon^*(\wt b_n)
k^\beta,
\]
where $B(x,y) = \Gamma(x)\Gamma(y)/\Gamma(x+y)$ is the beta function. 
  \end{proof}
\begin{proof}[Proof of Theorem~\ref{thm:cr}]
We know that $\E e^{-\eta_n(f)}=\E e^{-\what\Psi_n(f)}$ and that $\limn e^{-\Psi_n(f)}=e^{- \Cabf}$ (Lemma~\ref{lem:psi_n}).
So, to prove Theorem~\ref{thm:cr}, it is sufficient to prove that $\what\Psi_n(f)-\Psi_n(f)\to 0$ in probability for every fixed $f$, and we drop the dependence on $f$ from here on (note that $|e^{-\Psi_n}-e^{-\what\Psi_n}|\le 2 \wedge |1-e^{\Psi_n-\what\Psi_n}|$).
We shall prove successively that $\bar\Psi_n-\Psi_n\to 0$, $\wt\Psi_n-\bar\Psi_n\to 0$ in probability, and $\what\Psi_n-\wt\Psi_n\to 0$ in probability. 

Notice that Assumption \ref{assump:critical} and the choice of $b_n, \wt b_n$ (see \eqref{eq:bn}, \eqref{eq:wt_bn}) imply that (see Remark \ref{rem:lL})
\equh\label{eq:F*}
\limn \nu(n)\wb F^*_\varepsilon(\wt b_n) = 0 \qmand  \limn
n
\wb F_Z(b_n) = 0.
\eque
These and \eqref{eq:1-psi} in Lemma \ref{lem:psi} play a crucial role in the sequel.

\medskip

\noindent (i) We first show that 
\[
\wb\Psi_n-\Psi_n = \sif k1 \pp{\esp J_{n,k}-\nu(n)\Gamma(1-\beta)p_k\topp\beta}(1-\psi_{n,k})\to 0.
\]
Recall $\esp J_{n,k}$ and $p_k\topp\beta$ in \eqref{eq:EJ_nk} and \eqref{eq:pk_beta}: 
\[
\esp J_{n,k} = \frac{\nu(n)}{k!}\int_0^{np_1}f_{k}(y)  y^{-\beta}\frac{L(n/y)}{L(n)}dy \qmand
\Gamma(1-\beta)\nu(n) p_k^{(\beta)}=\frac{\nu(n)}{k!}\int_0^\infty
f_{k}(y)y^{-\beta}dy,
\]
with $f_{k}(y) = (k-y)y^{k-1}e^{-y}$.

For every $\epsilon>0$, let $A_\epsilon\in(0,p_1)$ be such that 
\equh\label{eq:L}
\limsupn\sup_{y\in[0,nA_\epsilon]}\abs{\frac{L(n/y)}{L(n)}-1}<\epsilon.
\eque
We then write,
\begin{align*}
 \bar\Psi_n-\Psi_n
&= \nu(n)\sum_{k=1}^\infty\frac1{k!}\int_0^{nA_\epsilon}  f_{k}(y)  y^{-\beta}\pp{\frac{L(n/y)}{L(n)}-1}dy\cdot (1-\psi_{n,k})\\
&\quad+ \nu(n)\sum_{k=1}^\infty\frac1{k!}\int_{nA_\epsilon}^{np_1}  f_{k}(y)  y^{-\beta}\pp{\frac{L(n/y)}{L(n)}-1}dy\cdot (1-\psi_{n,k})\\
&\quad-\nu(n)\sum_{k=1}^\infty\frac1{k!} \int_{np_1}^\infty f_{k}(y)  y^{-\beta}dy\cdot (1-\psi_{n,k})=: I^\epsilon_{n,1} + I^\epsilon_{n,2} - I_{n,3}.
\end{align*}
We shall show that 
\equh\label{eq:In2}
\limsupn \frac{|I_{n,1}^\epsilon|}{\Psi_n}\le \frac
{2\epsilon}
 {\Gamma(1-\beta)}\qmand \limsupn \pp{|I_{n,2}^\epsilon|+|I_{n,3}|} = 0, \quad\mfa \epsilon>0.
\eque

We first deal with $I_{n,3}$. Introduce
\[
I_{n,3}(p) := \nu(n)\sum_{k=1}^\infty\frac1{k!} \int_{np}^\infty f_{k}(y)  y^{-\beta}dy\cdot (1-\psi_{n,k}), p>0.
\]
So in \eqref{eq:In2} $I_{n,3} = I_{n,3}(p_1)$. 
Observe that, by integration by part,
\begin{align*}
\frac1{k!}\int_{np}^\infty f_{k}(y)y^{-\beta}dy &= \frac1{k!}y^ke^{-y}y^{-\beta}
\bigg\vert_{np}^\infty + \frac1{k!}\beta\int_{np}^\infty y^ke^{-y}y^{-\beta-1}dy\\
& = \frac1{k!}\beta\int_{np}^\infty y^{k-1-\beta}e^{-y}dy - \frac{(np)^{k-\beta}}{k!}e^{-np}\\
& =\Gamma(1-\beta) p_k^{(\beta)} \P(\gamma_{k-\beta}>np)-\frac{(np)^{k-\beta}}{k!}e^{-np},  
\end{align*}
where $\gamma_{k-\beta}$ is a random variable of Gamma distribution with parameter $k-\beta$. 
Thus,
\equh\label{eq:I_n3}
|I_{n,3}(p)|
\le C\nu(n)\sif k1p_k^{(\beta)} \P(\gamma_{k-\beta}>np)(1-\psi_{n,k}) + e^{-np}\nu(n)\sif k1 \frac{(np)^{k-\beta}}{k!}(1-\psi_{n,k}).
\eque
We deal with the two series separately. 
First, recalling \eqref{eq:F*}, 
one can find a sequence of integers  $\ell_n$ such that 
\[
\ell_n\to\infty, 
\quad \nu(n)\wb F_\varepsilon^*(\wt b_n)\ell_n\to 0
\qmand n \wb F_Z(b_n)\ell_n^{2-\beta}\to 0.
\]
Then, applying Markov inequality $\P(\gamma_{k-\beta}>np)\le(\esp\gamma_{k-\beta}/np)\wedge 1 = ((k-\beta)/np)\wedge 1$ to $k\le n\ell_n$ and $k>n\ell_n$ respectively, 
we have
\begin{align*} 
&\nu(n)\sif k1p_k^{(\beta)} \P(\gamma_{k-\beta}>np)(1-\psi_{n,k}) \\
& \le C\frac{\nu(n)}n\summ k1{n\ell_n}p_k\topp\beta(k-\beta)\pp{k^\beta \wb F_\varepsilon^*(\wt b_n) + k \wb F_Z(b_n)} + C\nu(n)\sif k{n\ell_n+1}p_k\topp\beta\\
& \le C\nu(n)\wb F_\varepsilon^*(\wt b_n)\ell_n + C\frac{\nu(n)}n\wb F_Z(b_n)(n\ell_n)^{2-\beta}+ C\nu(n)(n\ell_n)^{-\beta},
\end{align*}
where in the last step above we use the fact that $p_k\topp \beta\sim Ck^{-\beta-1}$ 
(recall Sibuya distribution \eqref{eq:Sibuya}) and Karamata theorem.
By our assumption on $\ell_n$ we have shown that the first series in \eqref{eq:I_n3} goes to zero. 
The second series in \eqref{eq:I_n3} can be bounded by, using \eqref{eq:1-psi},
\begin{multline}
\label{eq:second}
C\nu(n)e^{-np}\sif k1\frac{(np)^{k-\beta}}{k!}\pp{k^\beta \wb F_\varepsilon^*(\wt b_n) + k \wb F_Z(b_n)}\\
\le C\nu(n)\wb F_\varepsilon^*(\wt b_n)e^{-np}\sif k0\frac{(np)^{k+1-\beta
}}{\Gamma(k+2-\beta)}
+ C\nu(n)\wb F_Z(b_n)(np)^{1-\beta} \le C\nu(n)\wb F_\varepsilon^*(\wt b_n) + Cn\wb F_Z(b_n),
\end{multline}
where in the second inequality, the first term is bounded by the following estimate on  Mittag--Leffler function (e.g.~\citep[Eq.(6)]{gorenflo02computation})
\[%\equh\label{eq:ML}
E_{1,2-\beta}(y) = \sif k0 \frac{y^k}{\Gamma(k+2-\beta)} \le C y^{\beta-1}
e^y
 \mfa y\ge 1,
\]%\eque
and the second term by the fact $\nu(n)\le Cn^\beta$.
By \eqref{eq:F*}, \eqref{eq:second} tends to zero, and hence $I_{n,3}(p)\to 0$ for all $p>0$.

Now we deal with $I_{n,1}^\epsilon$. 
Note that $f_{k}(y)$ changes sign at $y=k$ so we proceed with caution. First we write, by \eqref{eq:Olivier},
\[
I_{n,1}^\epsilon = \nu(n)\int_0^{nA_\epsilon}\sif k0 \frac{y^{k-\beta}e^{-y}}{k!}(\psi_{n,k}-\psi_{n,k+1})\pp{\frac{L(n/y)}{L(n)}-1}dy,
\]
and recall that $\psi_{n,k}-\psi_{n,k+1}> 0$. 
Then, for $n$ large enough, thanks to \eqref{eq:L},
\begin{align*}
|I_{n,1}^\epsilon| & \le \nu(n)\int_0^{nA_\epsilon}\sif k0\frac{y^{k-\beta}e^{-y}}{k!}(\psi_{n,k}-\psi_{n,k+1})\abs{\frac{L(n/y)}{L(n)}-1}dy\\
& \le 2\epsilon \cdot \nu(n)\int_0^{nA_\epsilon}\sif k0 \frac{y^{k-\beta}e^{-y}}{k!}(\psi_{n,k}-\psi_{n,k+1})dy \\
& = 2\epsilon \cdot
\nu(n)\sif k1\int_0^{nA_\epsilon}\frac{f_{k}(y)y^{-\beta}}{k!}dy \cdot(1-\psi_{n,k})
 =  2\epsilon\frac{\Psi_n}{\Gamma(1-\beta)}  
 - 2\epsilon \cdot I_{n,3}(A_\epsilon),
\end{align*}
where in the third step \eqref{eq:Olivier} is applied again. 
We have seen that  $|I_{n,3}(A_\epsilon)|\to 0$. This proves the first part of \eqref{eq:In2}. 

It remains to deal with $I_{n,2}^\epsilon$. By the same trick on $I_{n,1}^\epsilon$ above using \eqref{eq:Olivier} twice, but this time combined with
$\limsupn\sup_{y\in[nA_\epsilon,np_1]}|L(n/y)/L(n)-1|\le C$ (which cannot be arbitrarily small, but is finite under our assumption on $\nu$), we have that
\begin{align*}
|I_{n,2}^\epsilon| & \le  
 \nu(n)\int_{nA_\epsilon}^{np_1}\sif k0 \frac{y^{k-\beta}e^{-y}}{k!}(\psi_{n,k}-\psi_{n,k+1})\abs{\frac{L(n/y)}{L(n)}-1}dy\\
& \le  
C \nu(n)\int_{nA_\epsilon}^{np_1}\sif k0 \frac{y^{k-\beta}e^{-y}}{k!}(\psi_{n,k}-\psi_{n,k+1})dy \\
&= C\pp{I_{n,3}(A_\epsilon)-I_{n,3}(p_1)}\le C(|I_{n,3}(A_\epsilon)|+|I_{n,3}(p_1)|)\to 0.
\end{align*}
This  completes the proof of $\bar\Psi_n-\Psi_n\to 0$.

\medskip

\noindent (ii) Next, we prove 
\[
\wt \Psi_{n} - \bar\Psi_n = \sif k1 (J_{n,k}-\esp J_{n,k})(1-\psi_{n,k}) \stackrel\proba\to 0.
\]
Introduce $N_\ell(n) :=\summ i1{N(n)}\inddd{Y_i = \ell}$. So $\{N_\ell(n)\}_{\ell\in\N}$ are independent Poisson random variables. Recall that
$J_{n,k} = \sif \ell1 \inddd{N_\ell(n) = k}$. By independence,
\[
\esp \pp{J_{n,k}J_{n,k'}} = \sum_{\ell\ne\ell'}\proba(N_\ell(n) = k)\proba(N_{\ell'}(n) = k') \le \esp J_{n,k}\esp J_{n,k'} \mfa k\ne k'. 
\]
It follows that
$\Var\spp{\wt \Psi_n-\bar \Psi_n}\le \sif k1 \Var J_{n,k}\cdot (1-\psi_{n,k})^2 \le \sif k1 \esp J_{n,k}\cdot (1- \psi_{n,k})^2$. 
Noticing that $J_{n,k}=0$ when $k>N(n)$ and then using \eqref{eq:1-psi} and Cauchy--Schwarz inequality, we infer
\equh\label{eq:var1}
\Var\pp{\wt \Psi_n-\bar \Psi_n}
\le \E\pp{\wt\Psi_n \max_{k=1,\ldots,N(n)}(1- \psi_{n,k})}\le 
\|\wt\Psi_n\|_2\|
1-\psi_{n,N(n)}\|_2.
\eque
Observe that 
$\snn{\wt\Psi_n}_2^2 = \Var \wt\Psi_n + (\esp \wt \Psi_n)^2 \le \wb\Psi_n + \wb\Psi_n^2$. 
Since $\wb \Psi_n$ has a finite limit, 
\eqref{eq:var1} is bounded from above by
\equh\label{eq:control}
C\nn{1-\psi_{n,N(n)}}_2 \le C\nn{N(n)^\beta\wb F_\varepsilon^*(\wt b_n) + N(n)\wb F_Z(n)}_2\to 0,
\eque
as a consequence of \eqref{eq:F*} and the fact that
 $N(n)^\beta$ is of order $\nu(n)\sim C n^\beta$ as $N(n)/n \stackrel{L^2}\to 1$.
Therefore we have proved that $\wt\Psi_{n} - \bar\Psi_n\to 0$ in $L^2$.

\medskip

\noindent(iii) It remains to prove that
$\what \Psi_n - \wt \Psi_n \stackrel\proba\to 0$. 
Using that $J_{n,k}=0$ when $k>N(n)$ and that $|\log(x)+(1-x)|/(1-x)\le(1-x)/x$ for $x\in(0,1)$, for $n$ large enough, we get 
\begin{align*}
 \left|\what\Psi_n-\wt\Psi_n\right|& = \abs{\sif k1 J_{n,k}\pp{\log \psi_{n,k} + 1-\psi_{n,k}}} \le \sif k1 J_{n,k}\cdot (1-\psi_{n,k})\frac{1-\psi_{n,k}}{\psi_{n,k}}\\
 & \le |\wt\Psi_n|\max_{k=1,\ldots,N(n)}\frac{1-\psi_{n,k}}{\psi_{n,k}} = |\wt\Psi_n|\frac{1-\psi_{n,N(n)}}{\psi_{n,N(n)}}.
\end{align*}
Since we have seen that $\wt\Psi_n \stackrel\proba\to \limn\Psi_n \in(0,\infty)$, by \eqref{eq:control},
we infer that $\what\Psi_n-\wt\Psi_n\to 0$ in probability.
\end{proof}
\NN
\section{Extremal limit theorems for the perturbed one-dimensional Karlin model}\label{sec:KSV}
We now apply Section \ref{sec:phase} to the perturbed one-dimensional Karlin model discussed in introduction. Let $\{\varepsilon_i,Y_i,Z_i\}_{i\in\N}$ be as in the Poisson--Karlin model. Then, the perturbed Karlin model is the stationary sequence defined as
\[
X_i:=\varepsilon_{Y_i}Z_i.
\]
We are interested in the empirical random sup-measure defined as
\[\what M_n(\cdot) := \max_{i=1,\dots,n: i/n\in\cdot}\varepsilon_{Y_i}Z_i \mbox{ in $\SM([0,1])$.}
\]
Below, $\calM_{\alpha}^{\rm is}$ denotes an independently scattered $\alpha$-Fr\'echet random sup-measure on $[0,1]$ with Lebesgue control measure (recall \eqref{eq:isRSM}).
\begin{theorem}\label{thm:KSV}
Assume $\alpha,\alpha'>0$ and $\beta\in(0,1)$. 
\begin{enumerate}[(i)]
\item {\em (Signal-dominance regime)} If $\wb F_\varepsilon(x)\in RV_{-\alpha}$, $\nu$ satisfies \eqref{eq:nu}, and $\esp \wt Z_{Q_{\beta-\epsilon}}^{\alpha+\epsilon}<\infty$ for some $\epsilon>0$, then for $\{a_n\}_{n\in\N}$ such that $\limn \Gamma(1-\beta)\nu(n)\wb F_\varepsilon(a_n) = 1$,
\[
  \summ i1n \ddelta{\varepsilon_{Y_i}Z_i/a_n, i/n} \weakto  \sif \ell1\summ i1{Q_{\beta,\ell}}\ddelta{\Gamma_\ell^{-1/\alpha}Z_{\ell,i},U_{\ell,i}}
\]
and
\[
\frac1{a_n}\what M_n(\cdot)\weakto  \mabZ(\cdot):=\sup_{\ell\in\N}\frac1{\Gamma_\ell^{1/\alpha}}\max_{i=1,\dots,Q_{\beta,\ell}}Z_{\ell,i}\inddd{U_{\ell,i}\in\cdot}.
\]
\item {\em (Critical regime)}
If $\alpha = \alpha'\beta$, $\wb F_\varepsilon(x)\in RV_{-\alpha}$, $\wb F_Z(x)\in RV_{-\alpha'}$ and Assumption \ref{assump:critical} holds, then for $\{b_n\}_{n\in\N}$ such that $\limn \Gamma(1-\beta)\nu(n)\wb F_{\varepsilon\wt Z_{Q_\beta}}(b_n) = 1$,
\[
  \summ i1n \ddelta{\varepsilon_{Y_i}Z_i/b_n, i/n} \weakto \sif \ell1\ddelta{S_\beta^{1/\alpha'}\Gamma_\ell^{-1/\alpha'},U_{\ell}}
\]
and
\[
\frac1{b_n}\what M_n\weakto S_\beta^{1/\alpha'}\cdot\calM_{\alpha'}^{\rm is}.
\]
\item {\em (Noise-dominance regime)}
If $\wb F_Z(x)\in RV_{-\alpha'}$ and $\esp_{\vv\varepsilon} \varepsilon_Y^{\alpha'+\epsilon}<\infty$ almost surely for some $\epsilon>0$, then for $\{c_n\}_{n\in\N}$ such that $\limn n\wb F_\varepsilon(c_n) = 1$, 
conditionally on $\vv\varepsilon$,
\[
  \summ i1n \ddelta{\varepsilon_{Y_i}Z_i/c_n, i/n} \weakto 
\sif\ell1\ddelta{\varepsilon_{Y_\ell}\Gamma_\ell^{-1/\alpha'}, U_{\ell}}
\]
and
\[
\frac1{c_n}\what M_n\weakto \pp{\esp_{\vv\varepsilon}\varepsilon_Y^{\alpha'}}^{1/\alpha'}\cdot\calM_{\alpha'}^{\rm is}.
\]
\end{enumerate}
In all three cases, the first convergence in distribution is in $\mathfrak M_p((0,\infty]\times [0,1])$ and the second in $\SM([0,1])$. 
\end{theorem}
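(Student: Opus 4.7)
The plan is to deduce Theorem~\ref{thm:KSV} from the Poisson--Karlin limit theorems of Section~\ref{sec:phase} by a coupling argument. Applying Theorems~\ref{thm:nd}, \ref{thm:sd} and~\ref{thm:cr} with $E=[0,1]$ and $\mu$ the Lebesgue measure yields, in each regime,
\[
\xi_n^P:=\summ i1{N(n)}\ddelta{\varepsilon_{Y_i}Z_i/r_n,U_i}\weakto \xi_{\rm lim}
\]
in $\mathfrak M_p((0,\infty]\times[0,1])$, where $r_n$ is $a_n$, $b_n$ or $c_n$ depending on the regime, $N(n)$ is Poisson with mean $n$, and $\{U_i\}_{i\in\N}$ are i.i.d.\ uniform on $[0,1]$, all independent of the Karlin inputs $\{Y_i,\varepsilon_\ell,Z_i\}_{i,\ell}$. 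Working on the same probability space, the target is
\[
\xi_n^K:=\summ i1n \ddelta{\varepsilon_{Y_i}Z_i/r_n,i/n}.
\]
The plan is to reduce $\xi_n^P\weakto\xi_{\rm lim}$ to $\xi_n^K\weakto\xi_{\rm lim}$ via two approximations: swapping uniform locations for $i/n$, and passing from $N(n)$ to $n$ points.

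\textbf{Swapping the locations.} Because $\{(Y_i,Z_i,U_i)\}_{i\in\N}$ is i.i.d., the vector $\{(Y_i,Z_i)\}_{i=1}^n$ is exchangeable and independent of $\{U_i\}_{i=1}^n$. Letting $\sigma$ be the random permutation sorting $(U_1,\dots,U_n)$ increasingly, I obtain the distributional identity
\[
\summ i1n\ddelta{\varepsilon_{Y_i}Z_i/r_n,U_i}\eqd \summ j1n\ddelta{\varepsilon_{Y_j}Z_j/r_n,U_{(j)}}.
\]
The Dvoretzky--Kiefer--Wolfowitz inequality gives $\max_{j\le n}|U_{(j)}-j/n|\to 0$ in probability; since convergence in $\mathfrak M_p((0,\infty]\times[0,1])$ is tested by continuous compactly supported functions (uniformly continuous in the location coordinate), replacing $U_{(j)}$ by $j/n$ does not affect the limit.

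\textbf{De-Poissonization.} In the noise-dominance regime this is immediate: conditionally on $\vv\varepsilon$ the marks $\{\varepsilon_{Y_i}Z_i\}_{i\in\N}$ are i.i.d., and classical de-Poissonization of an i.i.d.\ point process with regularly varying marks applies pointwise in $\vv\varepsilon$ almost surely. In the signal- and critical regimes the marks are not i.i.d., and I propose to re-execute the proofs of Theorems~\ref{thm:sd} and~\ref{thm:cr} with the deterministic sample size $n$ in place of the Poisson $N(n)$. Introducing $K_{n,\ell}^D:=\summ i1n\inddd{Y_i=\ell}$, $K_n^D:=\sif\ell1\inddd{K_{n,\ell}^D>0}$ and $J_{n,k}^D:=\sif\ell1\inddd{K_{n,\ell}^D=k}$, one has the same Karlin asymptotics $K_n^D/\nu(n)\to\Gamma(1-\beta)$ a.s.\ as in the Poissonized setting, together with integral representations for $\esp J_{n,k}^D$ analogous to \eqref{eq:EJ_nk}. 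The key analytic input in the critical regime, namely the bound $1-\psi_{n,k}\le C[k^\beta\wb F_\varepsilon^*(\wt b_n)+k\wb F_Z(b_n)]\wedge 1$ of Lemma~\ref{lem:psi}, does not depend on the Poisson structure, so the cascade $\what\Psi_n\to\wt\Psi_n\to\wb\Psi_n\to\Psi_n$ transcribes verbatim (and similarly for the uniform-integrability step in the proof of Theorem~\ref{thm:sd}).

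\textbf{Main obstacle.} The delicate point is the critical regime: a naive de-Poissonization based on a union bound on the $|N(n)-n|=O_p(\sqrt n)$ excess Poisson points gives an error of order $\sqrt n\cdot \wb F_{\varepsilon Z}(b_n)\asymp n^{1/2-\beta}/\log n$, which fails to vanish when $\beta\le 1/2$. This is what forces the direct route above --- re-running the Laplace-functional computation for $\xi_n^K$ --- which bypasses the union bound by leveraging the refined estimate in Lemma~\ref{lem:psi}. Once point-process convergence is established in all three regimes, the random-sup-measure convergence is automatic by \citep[Theorem 4.2]{durieu18family}.
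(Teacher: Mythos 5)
Your overall strategy---reduce to the Poissonized Theorems \ref{thm:nd}, \ref{thm:sd}, \ref{thm:cr}---is the right one, and you have correctly diagnosed why a crude de-Poissonization fails in the critical regime (indeed $\esp\eta_n([\kappa,\infty])=n\wb F_{\varepsilon Z}(\kappa b_n)$ is of order $n^{1-\beta}$ up to logarithms, so first-moment and union-bound arguments are hopeless). The gap is in how you close it: the claim that one can ``re-execute the proofs of Theorems \ref{thm:sd} and \ref{thm:cr} with deterministic sample size $n$'' and that ``the cascade transcribes verbatim'' is not justified, and this is precisely the hardest point of the theorem. The proof of Theorem \ref{thm:cr} uses the Poisson structure essentially in at least two places. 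First, all of step (i) (showing $\wb\Psi_n-\Psi_n\to0$) rests on the identity $\esp J_{n,k}=\frac1{k!}\int_0^\infty F_{n,k}(x)\nu(dx)$ with $F_{n,k}(x)=(n/x)^ke^{-n/x}$, i.e.\ on $K_{n,\ell}$ being Poisson with mean $np_\ell$; with deterministic $n$ the occupancy counts are binomial, $\esp J^D_{n,k}=\binom nk\sum_\ell p_\ell^k(1-p_\ell)^{n-k}$, and the subsequent manipulations (integration by parts against $f_k$, the identity \eqref{eq:Olivier}, the Gamma-tail and Mittag--Leffler estimates) must all be redone for the binomial kernel --- feasible, but a genuine piece of new analysis, not a transcription. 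Second, step (ii) bounds $\Var\spp{\wt\Psi_n-\wb\Psi_n}$ using that the urn counts $\{N_\ell(n)\}_{\ell}$ are \emph{independent} Poisson variables, which yields $\esp(J_{n,k}J_{n,k'})\le\esp J_{n,k}\esp J_{n,k'}$; under multinomial sampling these counts are dependent and you would need a separate (e.g.\ negative-association) argument. The same objection applies to steps 2) and 4) of the proof of Theorem \ref{thm:sd}, which also go through \eqref{eq:EJ_nk}. Finally, your location-swap step as written presupposes tightness of $\#\{i\le n:\varepsilon_{Y_i}Z_i>\kappa r_n\}$, which is part of what is being proved, so the two reductions must be ordered with care.

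The paper sidesteps all of this with a sandwich coupling (Lemma \ref{lem:1}): for a non-negative step function $f$ supported on disjoint location strips $A_j$, the functional $\what\xi_n(f)$ is a sum of non-negative terms over the $n_j=\#(nA_j\cap\Z)$ deterministic indices landing in $A_j$, and is therefore squeezed, on an event of probability tending to one, between the corresponding Poissonized functionals built from $N((1-\delta)n)$ and $N((1+\delta)n)$ points, because $N_{n,\delta,-}(j)\le n_j\le N_{n,\delta,+}(j)$ with high probability. One then invokes the already-proven Poissonized theorems at rates $(1\pm\delta)n$ and lets $\delta\downarrow0$, with no re-derivation of any of the hard estimates. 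You should either adopt this squeeze argument or actually carry out the binomial analogues of \eqref{eq:EJ_nk} and of the variance bound; as it stands your proposal asserts rather than proves the critical step.
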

\begin{remark}
Our Poissonization method is different from the one applied for the original Karlin model \citep{karlin67central,gnedin07notes}, which is essentially a time-change lemma \citep{billingsley99convergence} that depends crucially on the fact that $\R$ and $\N$ are ordered. Our method is geometry free in the sense that it can be adapted to other situations where the time-change lemma does not apply. For example, one may consider the $\Nd$-extension of the problem: let $\{\varepsilon_\ell\}_{\ell\in\N}$ be as before, $Y$ and $Z$ be i.i.d.~indexed by $\vv i\in\Nd$, all assumed to be independent, and 
\[
\what M_n(\cdot):=\max_{\vv i \in\{1,\dots,n\}^d:\vv i/n\in \cdot}\varepsilon_{Y_{\vv i}}Z_{\vv i}.
\]
Theorem \ref{thm:KSV} can be extended to this model with some obvious changes, and our Poissonization method applies to this model too with little extra effort. We omit the details.
\end{remark}
\subsection{A Poissonization method}
Our method is unified for all three different regimes.
Consider the point process of the perturbed Karlin  model
\[
\what \xi_n := \summ i1n \ddelta{\varepsilon_{Y_i}Z_i/r_n, i/n},
\]
where $r_n = a_n,b_n$ or $c_n$ depending on the regime, %Our method is unified for all three different regimes 
and we do not write the rate $r_n$ explicitly. 
A natural Poissonization of $\what \xi_n$ would be
\[
\xi_n:=\summ i1{N(n)}\ddelta{\varepsilon_{Y_i}Z_i/r_n,U_i},
\]
which is the same point-process investigated before, with i.i.d.~uniform random variables $\{U_i\}_{i\in\N}$ 
on $[0,1]$. We have seen in Section \ref{sec:phase} that
\[
\xi_n\weakto \xi
\]
in $\mathfrak M_p((0,\infty]\times(0,1))$, where $\xi$ is the Poisson point process underlying the random sup-measure in the corresponding regime. This was actually achieved by  computing,  for $f$  a continuous function on $(0,\infty]\times[0,1]$ with compact support,
\equh\label{eq:xi_n(f)}
\limn\esp e^{-\xi_n(f)} = \esp e^{-\xi(f)} = e^{-\Cabfs},
\eque
%\textcolor{red}{YZ: I think the referee is talking about here: $\Cabfs$ is only for the critical regime...?}
%\textcolor{blue}{OLI : Should we simply not mention the expression of $\Cabfs$ here and keep it general or should we compute $\Cabfs$ for the other cases ? }
%\textcolor{red}{YZ0410: not to mention it is easier...?} \textcolor{blue}{OLI0411: Right ;-)}
for some expression $\Cabfs$ that depends on the regime of interest. We omit the expression. 
%with
%\[ \Cabfs = \left(\int_0^\infty \int_0^1(1-e^{- f(v,u)})\alpha'v^{-\alpha'-1}dvdu\right)^\beta.\]
%(We now include the location variables $U$ and hence $f$ and $\Cabf$ are modified accordingly.)

Consider $f$ in the form
\equh\label{eq:f_step}
f(x,u) = \summ j1d\theta_j \inddd{x\in(g_j,h_j), u\in(s_j,t_j)},\quad  c_j>0, 0<g_j<h_j, 0\le s_j<t_j\le 1.
\eque
Let $\delta>0$ denote a tuning parameter. Our Poissonization method is summarized by the following lemma.
\begin{lemma}\label{lem:1}
For $f$ as above and $\delta>0$, there exist point processes $\xi_{n,\delta,-},\xi_{n,\delta,+}$ in $\mathfrak M_p((0,\infty]\times[0,1])$ such that
\equh\label{eq:coupling_xi}
\limn\proba\pp{\xi_{n,\delta,-}(f) \le \what \xi_n(f) \le \xi_{n,\delta,+}(f)} = 1,
\eque
and moreover, there exist constants $\mathfrak C_{\alpha,\beta,*}^{\delta,\pm}(f)$ such that
\equh\label{eq:coupling_C}
\limn \esp e^{-\xi_{n,\delta,\pm}(f)} = e^{-\mathfrak C_{\alpha,\beta,*}^{\delta,\pm}(f)} \qmand \lim_{\delta\downarrow 0}\mathfrak C_{\alpha,\beta,*}^{\delta,\pm}(f) = \Cabfs.
\eque
\end{lemma}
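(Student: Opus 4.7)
The plan is to couple $\what\xi_n$ with the Poissonized $\xi_n$ on a common probability space through the order statistics of auxiliary uniforms, exploit that for large $n$ the uniform order statistics sit within $O(n^{-1/2})$ of the regular grid $\{i/n\}$, and absorb this discrepancy by slightly shrinking or expanding the time-intervals in the step function $f$.

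First I would build everything on a single probability space: the sequence $\{\varepsilon_\ell,Y_i,Z_i\}_{\ell,i\in\N}$, an independent Poisson variable $N(n)$ of mean $n$, and an independent i.i.d.\ sequence $\{V_i\}_{i\in\N}$ of $[0,1]$-uniforms. Write $X_i=\varepsilon_{Y_i}Z_i$ and $V_{(1)}<\cdots<V_{(N(n))}$ for the order statistics. Exchangeability of $(X_i)_i$ independently of $(V_i)_i$ yields
\[
 \xi_n \eqd \sum_{i=1}^{N(n)}\delta_{(X_i/r_n,\,V_{(i)})},\qquad \what\xi_n=\sum_{i=1}^n \delta_{(X_i/r_n,\,i/n)},
\]
on the same space, with the same values $X_i$. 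The Dvoretzky--Kiefer--Wolfowitz inequality together with Poisson concentration of $N(n)$ around $n$ implies that for every fixed $\delta>0$, $\proba(\mathcal A_{n,\delta})\to 1$, where $\mathcal A_{n,\delta}:=\{\sup_{i\le n\wedge N(n)}|V_{(i)}-i/n|\le\delta,\ |N(n)-n|\le \delta n\}$.

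Next, for the step function $f$ in \eqref{eq:f_step}, define the shrunk and expanded versions
\[
 f_{\delta,\pm}(x,u):=\summ j1d\theta_j\inddd{x\in(g_j,h_j),\,u\in(s_j\mp\delta,\,t_j\pm\delta)},
\]
with the convention that $(s_j+\delta,t_j-\delta)=\emptyset$ when $t_j-s_j<2\delta$. Set $\xi_{n,\delta,-}$ to be the restriction of $\sum_{i=1}^{n\wedge N(n)}\delta_{(X_i/r_n,V_{(i)})}$ to the time-support $\bigcup_j(s_j+\delta,t_j-\delta)$ of $f_{\delta,-}$, and $\xi_{n,\delta,+}$ to be the restriction of $\xi_n$ to $\bigcup_j(s_j-\delta,t_j+\delta)$ augmented on $\{N(n)<n\}$ by the auxiliary points $\{(X_i/r_n,i/n):N(n)<i\le n\}$. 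On $\mathcal A_{n,\delta}$, any $i\le n$ with $V_{(i)}\in(s_j+\delta,t_j-\delta)$ necessarily satisfies $i/n\in(s_j,t_j)$, yielding $\xi_{n,\delta,-}(f)\le\what\xi_n(f)$; symmetrically, each deterministic point $(X_i/r_n,i/n)$ of $\what\xi_n$ is either matched to its Poisson counterpart $(X_i/r_n,V_{(i)})\in(g_j,h_j)\times(s_j-\delta,t_j+\delta)$ or lies in the augmentation, establishing $\what\xi_n(f)\le\xi_{n,\delta,+}(f)$ and hence \eqref{eq:coupling_xi}.

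For the Laplace functionals, since $f_{\delta,\pm}$ are themselves of the form \eqref{eq:f_step}, the already-established convergence \eqref{eq:xi_n(f)} gives $\limn \esp e^{-\xi_n(f_{\delta,\pm})}=\exp(-\mathfrak C_{\alpha,\beta,*}(f_{\delta,\pm}))$. The augmentation in $\xi_{n,\delta,+}$ concerns at most $|n-N(n)|=O(\sqrt n)$ indices, each contributing above the threshold $\kappa:=\min_j g_j>0$ with probability $\wb F_X(\kappa r_n)=O(n^{-1})$, so its expected Laplace contribution is $o(1)$; the truncation in $\xi_{n,\delta,-}$ is controlled identically. Writing $\mathfrak C_{\alpha,\beta,*}^{\delta,\pm}(f):=\mathfrak C_{\alpha,\beta,*}(f_{\delta,\pm})$, continuity of $\mathfrak C_{\alpha,\beta,*}$ in the rectangle coordinates $(s_j,t_j)$---read off from the explicit expressions in Sections \ref{sec:nd}--\ref{sec:critical}---gives $\lim_{\delta\downarrow 0}\mathfrak C_{\alpha,\beta,*}^{\delta,\pm}(f)=\mathfrak C_{\alpha,\beta,*}(f)$, completing \eqref{eq:coupling_C}. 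The main obstacle is the $|n-N(n)|=O(\sqrt n)$ mismatch of indices on the upper side, which could a priori destroy the Laplace limit; the crucial point is that among these unmatched indices the expected number carrying a value above $\kappa r_n$ is $O(\sqrt n)\cdot O(n^{-1})=o(1)$, so this error is uniformly negligible in $\delta$.
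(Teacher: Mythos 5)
Your de-Poissonization route (attaching uniform order statistics to the $X_i$ and invoking DKW to snap locations onto the grid) is genuinely different from the paper's, which instead matches the \emph{counts} per interval: the paper takes $\xi_{n,\delta,\pm}\eqd\sum_{i=1}^{N((1\pm\delta)n)}\delta_{(\varepsilon_{Y_i}Z_i/r_n,U_i)}$, shares the same values $\varepsilon_{Y_{j,i}}Z_{j,i}$ across the three processes, and uses only that $N_{n,\delta,-}(j)\le n_j\le N_{n,\delta,+}(j)$ with high probability, so no location perturbation and no control of unmatched indices is ever needed. Unfortunately your version has a genuine quantitative gap at exactly the point you identify as "the main obstacle." The claim that each unmatched index exceeds the threshold with probability $\wb F_X(\kappa r_n)=O(n^{-1})$ is false in the signal-dominance and critical regimes: there $r_n$ is calibrated by $\Gamma(1-\beta)\nu(n)\wb F_\varepsilon(a_n)\to1$ (resp.\ $\Gamma(1-\beta)\nu(n)\wb F_{\varepsilon\wt Z_{Q_\beta}}(b_n)\to1$), so $\wb F_\varepsilon(r_n)\asymp n^{-\beta}$ and, by Breiman, $\wb F_X(\kappa r_n)=\wb F_{\varepsilon Z}(\kappa r_n)\asymp n^{-\beta}$ up to slowly varying factors --- this is precisely the long-range-clustering feature that the marginal exceedance probability is $n^{-\beta}\gg n^{-1}$. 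Your union bound over the $|N(n)-n|=O(\sqrt n)$ unmatched indices then gives $O(n^{1/2-\beta})$, which diverges for $\beta\le1/2$. The conclusion that these indices are negligible is still true, but proving it requires exploiting the Karlin structure (e.g.\ bounding by the $O(n^{\beta/2})$ distinct $Y$-values among $O(\sqrt n)$ consecutive indices, each block exceeding with probability $O(n^{-\beta})$), which is a nontrivial extra argument absent from your proposal. The estimate $O(n^{-1})$ is correct only in the noise-dominance regime, and even there only conditionally on $\vv\varepsilon$.

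A secondary, fixable issue: as literally defined, $\xi_{n,\delta,+}$ (the restriction of $\xi_n$ to $\bigcup_j(s_j-\delta,t_j+\delta)$ plus the augmentation) does \emph{not} satisfy $\what\xi_n(f)\le\xi_{n,\delta,+}(f)$. A point of $\what\xi_n$ with $i/n\in(s_j,t_j)$ may be matched to a Poisson point with $V_{(i)}\in(s_j-\delta,s_j]$, which survives your restriction but contributes $0$ to $f$ (not to $f_{\delta,+}$), so the upper sandwich fails pointwise. What your argument actually proves is $\what\xi_n(f)\le\xi_n(f_{\delta,+})+(\text{augmentation})$; to produce an actual point process dominating $\what\xi_n$ \emph{under the same $f$}, as the lemma requires, you would need to relocate points (or restate the lemma in terms of $f_{\delta,\pm}$). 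The paper's count-matching construction avoids both difficulties, which is why it is the cleaner route here.
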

\begin{proof}[Proof of Theorem \ref{thm:KSV}]
By the first part of the lemma above and the limit theorem for the Poissonized model, we have
\[
e^{-\mathfrak C^{\delta,+}_{\alpha,\beta,*}(f)}
= \limn\esp e^{-\xi_{n,\delta,+}(f)} 
\le \liminf_{n\to\infty} \esp e^{-\what \xi_n(f)} \le \limsup_{n\to\infty} \esp e^{-\what\xi_n(f)}\le \limn\esp e^{-\xi_{n,\delta,-}(f)} =
e^{-\mathfrak C^{\delta,-}_{\alpha,\beta,*}(f)}.
\]
The second part of Lemma \ref{lem:1} then entails, letting $\delta$ decrease to zero, that  inequality in the middle above is actually an equality, and hence the desired convergence of Laplace functional for $f$ as a step function in \eqref{eq:f_step}. The convergence for general continuous $f$ in \eqref{eq:xi_n(f)} follows by a standard approximation argument.
\end{proof}
\begin{proof}[Proof of Lemma \ref{lem:1}]
To start with, assume in addition that all $A_j:=(s_j,t_j), j=1,\dots,d$ are disjoint. Introduce
\[
n_j := \summ i1n \inddd{i/n\in A_j}.
\]
Then,
\equh\label{eq:coupling1}
\what\xi_n(f) = \summ j1d\theta_j\summ i1n \inddd{i/n\in A_j}\inddd{\varepsilon_{Y_i}Z_i/b_n\in(g_j,h_j)} \eqd \summ j1d\theta_j\sum_{i=1}^{n_j}\inddd{\varepsilon_{Y_{j,i}}Z_{j,i}/b_n\in(g_j,h_j)},
\eque
where $\varepsilon$ is as before, $Y_{j,i}$ and $Z_{j,i}$ are i.i.d.~copies of $Y$ and $Z$, respectively, and independent from $\{\varepsilon_\ell\}_{\ell\in\N}$
 (but $\{\varepsilon_{Y_{j,i}}\}_{j,i}$ are dependent).  

Now we introduce, for every $\delta\in(0,1)$,
\[
\xi_{n,\delta,\pm}\eqd \summ i1{N((1\pm\delta)n)} \ddelta{\varepsilon_{Y_i}Z_i/r_n,U_i},
\]
where $\{\varepsilon_i,Y_i,Z_i,U_i\}_{i\in\N}$ are as in the Poisson--Karlin model 
(with $\mu$ the uniform law on $[0,1]$),
independent from the Poisson random variable $N((1\pm\delta)n)$ (with mean $(1\pm\delta)n$).  
 The above is interpreted as the law of $\xi_{n,\delta,+}$ and $\xi_{n,\delta,-}$ separately. We shall first derive for each of $\xi_{n,\delta,\pm}(f)$  a similar representation as \eqref{eq:coupling1} in \eqref{eq:coupling2} below, and then explain the coupling. Set
\[
N_{n,\delta,\pm}
(j):= \xi_{n,\delta,\pm}((0,\infty)\times A_j), j=1,\dots,d.
\]
Since $\{A_j\}_{j=1,\dots,d}$ are disjoint, $\{N_{n,\delta,+}(j)\}_{j=1,\dots,d}$ ($\{N_{n,\delta,-}(j)\}_{j=1,\dots,d}$ resp.) are independent Poisson random variables 
with parameters $(1+\delta)n |A_j|$ ($(1-\delta)n|A_j|$ resp.). 
We hence arrive at
\equh\label{eq:coupling2}
\xi_{n,\delta,\pm}(f) \eqd \summ j1d\theta_j\sum_{i=1}^{N_{n,\delta,\pm}
(j)}\inddd{\varepsilon_{Y_{j,i}}Z_{j,i}/r_n\in(g_j,h_j)}.
\eque

Now we explain the coupling of $\what \xi_{n}(f),\xi_{n,\delta,+}(f)$ and $\xi_{n,\delta,-}(f)$. In view of \eqref{eq:coupling1} and \eqref{eq:coupling2}, we assume  naturally that the three random summations share the same $\{\varepsilon_i\}_{i\in\N}, \{Y_{j,i},Z_{j,i}\}_{j=1,\dots,d,i\in\N}$, and that these random variables are independent from 
$\{N_{n,\delta,\pm}(j)\}_{j=1,\dots,d}$. It is also natural to assume that $\xi_{n,\delta,+}$ and $\xi_{n,\delta,-}$ are coupled in the sense that the latter is obtained from the former by a standard thinning procedure (with probability $(1-\delta)/(1+\delta)$ to keep independently each point from the former), which leads to $N_{n,\delta,-}(j)\le N_{n,\delta,+}(j)$ almost surely for all $j$. 
Therefore it remains to show
\equh\label{eq:Omega_n}
\limn\proba\pp{N_{n,\delta,-}(j)\le n_j\le N_{n,\delta,+}(j) \mfa j=1,\dots,d} = 1.
\eque
But, since $n_j = \#(nA_j\cap \Z)\sim n|A_j|$, the above follows immediately from the concentration of Poisson random variables $N_{n,\delta,\pm}(j)$ around $(1\pm\delta)n|A_j|$, and the probability approaches one exponentially fast as $n\to\infty$. Combining \eqref{eq:coupling1}, \eqref{eq:coupling2} and \eqref{eq:Omega_n} yields \eqref{eq:coupling_xi}. The part \eqref{eq:coupling_C} follows from our result in the previous section 
and we omit the details.
\end{proof}
%%%%%%%%%%%%%%%%%%%%%%%%%%%%%%%%%%%%%%%%%%%%%%%%%%%%%%%%%%%%%%%%%%%%%%%%%%%%%%%%%%%%%%%%%%%%%%%%%%%%
\subsection*{Acknowledgements}
The authors would like to thank Anja 
Jan\ss en
 for helpful discussions on stochastic volatility models, and Rafa\l~Kulik for very careful reading of an earlier version of the paper and many helpful comments. 
  The authors would also like to thank two anonymous referees and the Associate Editor for their very helpful comments and suggestions. 
YW's research was partially supported by Army Research Office grant W911NF-17-1-0006.

\bibliographystyle{apalike}

%\bibliography{../../include/references,../../include/references18}
\bibliography{references,references18}

\def\cprime{$'$} \def\polhk#1{\setbox0=\hbox{#1}{\ooalign{\hidewidth
  \lower1.5ex\hbox{`}\hidewidth\crcr\unhbox0}}}
  \def\polhk#1{\setbox0=\hbox{#1}{\ooalign{\hidewidth
  \lower1.5ex\hbox{`}\hidewidth\crcr\unhbox0}}}
\begin{thebibliography}{}

\bibitem[Aldous, 1989]{aldous89probability}
Aldous, D. (1989).
\newblock {\em Probability approximations via the {P}oisson clumping
  heuristic}, volume~77 of {\em Applied Mathematical Sciences}.
\newblock Springer-Verlag, New York.

\bibitem[Andersen et~al., 2009]{andersen09handbook}
Andersen, T.~G., Davis, R.~A., Krei{\ss}, J.-P., and Mikosch, T.~V. (2009).
\newblock {\em Handbook of financial time series}.
\newblock Springer Science \& Business Media.

\bibitem[Basrak et~al., 2002]{basrak02regular}
Basrak, B., Davis, R.~A., and Mikosch, T. (2002).
\newblock Regular variation of {GARCH} processes.
\newblock {\em Stochastic Process. Appl.}, 99(1):95--115.

\bibitem[Basrak et~al., 2018]{basrak18invariance}
Basrak, B., Planini\'{c}, H., and Soulier, P. (2018).
\newblock An invariance principle for sums and record times of regularly
  varying stationary sequences.
\newblock {\em Probab. Theory Related Fields}, 172(3-4):869--914.

\bibitem[Basrak and Segers, 2009]{basrak09regularly}
Basrak, B. and Segers, J. (2009).
\newblock Regularly varying multivariate time series.
\newblock {\em Stochastic Process. Appl.}, 119(4):1055--1080.

\bibitem[Billingsley, 1999]{billingsley99convergence}
Billingsley, P. (1999).
\newblock {\em Convergence of probability measures}.
\newblock Wiley Series in Probability and Statistics: Probability and
  Statistics. John Wiley \& Sons Inc., New York, second edition.
\newblock A Wiley-Interscience Publication.

\bibitem[Bingham et~al., 1987]{bingham87regular}
Bingham, N.~H., Goldie, C.~M., and Teugels, J.~L. (1987).
\newblock {\em Regular variation}, volume~27 of {\em Encyclopedia of
  Mathematics and its Applications}.
\newblock Cambridge University Press, Cambridge.

\bibitem[Bojani\'{c} and Seneta, 1971]{bojanic71slowly}
Bojani\'{c}, R. and Seneta, E. (1971).
\newblock Slowly varying functions and asymptotic relations.
\newblock {\em J. Math. Anal. Appl.}, 34:302--315.

\bibitem[Breiman, 1965]{breiman65some}
Breiman, L. (1965).
\newblock On some limit theorems similar to the arc-sin law.
\newblock {\em Theory of Probability and its Applications}, 10(2):323--331.

\bibitem[Chen and Samorodnitsky, 2020]{chen20extreme}
Chen, Z. and Samorodnitsky, G. (2020).
\newblock Extreme value theory for long-range-dependent stable random fields.
\newblock {\em J. Theoret. Probab.}, 33(4):1894--1918.

\bibitem[Dombry et~al., 2018]{dombry18tail}
Dombry, C., Hashorva, E., and Soulier, P. (2018).
\newblock Tail measure and spectral tail process of regularly varying time
  series.
\newblock {\em Ann. Appl. Probab.}, 28(6):3884--3921.

\bibitem[Drees et~al., 2015]{drees15statistics}
Drees, H., Segers, J., and Warcho\l, M. (2015).
\newblock Statistics for tail processes of {M}arkov chains.
\newblock {\em Extremes}, 18(3):369--402.

\bibitem[Durieu et~al., 2020]{durieu20infinite}
Durieu, O., Samorodnitsky, G., and Wang, Y. (2020).
\newblock From infinite urn schemes to self-similar stable processes.
\newblock {\em Stochastic Process. Appl.}, 130(4):2471--2487.

\bibitem[Durieu and Wang, 2016]{durieu16infinite}
Durieu, O. and Wang, Y. (2016).
\newblock From infinite urn schemes to decompositions of self-similar
  {G}aussian processes.
\newblock {\em Electron. J. Probab.}, 21:Paper No. 43, 23.

\bibitem[Durieu and Wang, 2018]{durieu18family}
Durieu, O. and Wang, Y. (2018).
\newblock A family of random sup-measures with long-range dependence.
\newblock {\em Electronic Journal of Probability}, 23(107):1--24.

\bibitem[Foug\`eres et~al., 2013]{fougeres13dense}
Foug\`eres, A.-L., Mercadier, C., and Nolan, J.~P. (2013).
\newblock Dense classes of multivariate extreme value distributions.
\newblock {\em J. Multivariate Anal.}, 116:109--129.

\bibitem[Fu and Wang, 2020]{fu20stable}
Fu, Z. and Wang, Y. (2020).
\newblock Stable {P}rocesses with {S}tationary {I}ncrements {P}arameterized by
  {M}etric {S}paces.
\newblock {\em J. Theoret. Probab.}, 33(3):1737--1754.

\bibitem[Gnedin et~al., 2007]{gnedin07notes}
Gnedin, A., Hansen, B., and Pitman, J. (2007).
\newblock Notes on the occupancy problem with infinitely many boxes: general
  asymptotics and power laws.
\newblock {\em Probab. Surv.}, 4:146--171.

\bibitem[Gorenflo et~al., 2002]{gorenflo02computation}
Gorenflo, R., Loutchko, J., and Luchko, Y. (2002).
\newblock Computation of the {M}ittag-{L}effler function
  {$E_{\alpha,\beta}(z)$} and its derivative.
\newblock {\em Fract. Calc. Appl. Anal.}, 5(4):491--518.
\newblock Dedicated to the 60th anniversary of Prof. Francesco Mainardi.

\bibitem[Gumbel, 1960]{gumbel60bivariate}
Gumbel, E.~J. (1960).
\newblock Bivariate exponential distributions.
\newblock {\em J. Amer. Statist. Assoc.}, 55:698--707.

\bibitem[Janssen, 2019]{janssen19spectral}
Janssen, A. (2019).
\newblock Spectral tail processes and max-stable approximations of multivariate
  regularly varying time series.
\newblock {\em Stochastic Process. Appl.}, 129(6):1993--2009.

\bibitem[Janssen and Drees, 2016]{janssen16stochastic}
Janssen, A. and Drees, H. (2016).
\newblock A stochastic volatility model with flexible extremal dependence
  structure.
\newblock {\em Bernoulli}, 22(3):1448--1490.

\bibitem[Jessen and Mikosch, 2006]{jessen06regularly}
Jessen, A.~H. and Mikosch, T. (2006).
\newblock Regularly varying functions.
\newblock {\em Publ. Inst. Math. (Beograd) (N.S.)}, 80(94):171--192.

\bibitem[Kallenberg, 2017]{kallenberg17random}
Kallenberg, O. (2017).
\newblock {\em Random measures, theory and applications}, volume~77 of {\em
  Probability Theory and Stochastic Modelling}.
\newblock Springer, Cham.

\bibitem[Karlin, 1967]{karlin67central}
Karlin, S. (1967).
\newblock Central limit theorems for certain infinite urn schemes.
\newblock {\em J. Math. Mech.}, 17:373--401.

\bibitem[Kulik and Soulier, 2011]{kulik11tail}
Kulik, R. and Soulier, P. (2011).
\newblock The tail empirical process for long memory stochastic volatility
  sequences.
\newblock {\em Stochastic Process. Appl.}, 121(1):109--134.

\bibitem[Kulik and Soulier, 2012]{kulik12limit}
Kulik, R. and Soulier, P. (2012).
\newblock Limit theorems for long-memory stochastic volatility models with
  infinite variance: partial sums and sample covariances.
\newblock {\em Adv. in Appl. Probab.}, 44(4):1113--1141.

\bibitem[Kulik and Soulier, 2013]{kulik13estimation}
Kulik, R. and Soulier, P. (2013).
\newblock Estimation of limiting conditional distributions for the heavy tailed
  long memory stochastic volatility process.
\newblock {\em Extremes}, 16(2):203--239.

\bibitem[Kulik and Soulier, 2015]{kulik15heavy}
Kulik, R. and Soulier, P. (2015).
\newblock Heavy tailed time series with extremal independence.
\newblock {\em Extremes}, 18(2):273--299.

\bibitem[Lacaux and Samorodnitsky, 2016]{lacaux16time}
Lacaux, C. and Samorodnitsky, G. (2016).
\newblock Time-changed extremal process as a random sup measure.
\newblock {\em Bernoulli}, 22(4):1979--2000.

\bibitem[Leadbetter et~al., 1983]{leadbetter83extremes}
Leadbetter, M.~R., Lindgren, G., and Rootz{\'e}n, H. (1983).
\newblock {\em Extremes and related properties of random sequences and
  processes}.
\newblock Springer Series in Statistics. Springer-Verlag, New York.

\bibitem[Mikosch and Rezapour, 2013]{mikosch13stochastic}
Mikosch, T. and Rezapour, M. (2013).
\newblock Stochastic volatility models with possible extremal clustering.
\newblock {\em Bernoulli}, 19(5A):1688--1713.

\bibitem[Molchanov, 2017]{molchanov17theory}
Molchanov, I. (2017).
\newblock {\em Theory of random sets}, volume~87 of {\em Probability Theory and
  Stochastic Modelling}.
\newblock Springer-Verlag, London.
\newblock Second edition of [ MR2132405].

\bibitem[Molchanov and Strokorb, 2016]{molchanov16max}
Molchanov, I. and Strokorb, K. (2016).
\newblock Max-stable random sup-measures with comonotonic tail dependence.
\newblock {\em Stochastic Process. Appl.}, 126(9):2835--2859.

\bibitem[O'Brien et~al., 1990]{obrien90stationary}
O'Brien, G.~L., Torfs, P. J. J.~F., and Vervaat, W. (1990).
\newblock Stationary self-similar extremal processes.
\newblock {\em Probab. Theory Related Fields}, 87(1):97--119.

\bibitem[Pitman, 2006]{pitman06combinatorial}
Pitman, J. (2006).
\newblock {\em Combinatorial stochastic processes}, volume 1875 of {\em Lecture
  Notes in Mathematics}.
\newblock Springer-Verlag, Berlin.
\newblock Lectures from the 32nd Summer School on Probability Theory held in
  Saint-Flour, July 7--24, 2002, With a foreword by Jean Picard.

\bibitem[Resnick, 1987]{resnick87extreme}
Resnick, S.~I. (1987).
\newblock {\em Extreme values, regular variation, and point processes},
  volume~4 of {\em Applied Probability. A Series of the Applied Probability
  Trust}.
\newblock Springer-Verlag, New York.

\bibitem[Resnick, 2007]{resnick07heavy}
Resnick, S.~I. (2007).
\newblock {\em Heavy-tail phenomena}.
\newblock Springer Series in Operations Research and Financial Engineering.
  Springer, New York.
\newblock Probabilistic and statistical modeling.

\bibitem[Samorodnitsky, 2016]{samorodnitsky16stochastic}
Samorodnitsky, G. (2016).
\newblock {\em Stochastic processes and long range dependence}.
\newblock Springer, Cham, Switzerland.

\bibitem[Samorodnitsky and Taqqu, 1994]{samorodnitsky94stable}
Samorodnitsky, G. and Taqqu, M.~S. (1994).
\newblock {\em Stable non-{G}aussian random processes}.
\newblock Stochastic Modeling. Chapman \& Hall, New York.
\newblock Stochastic models with infinite variance.

\bibitem[Samorodnitsky and Wang, 2019]{samorodnitsky19extremal}
Samorodnitsky, G. and Wang, Y. (2019).
\newblock Extremal theory for long range dependent infinitely divisible
  processes.
\newblock {\em Ann. Probab.}, 47(4):2529--2562.

\bibitem[Sibuya, 1979]{sibuya79generalized}
Sibuya, M. (1979).
\newblock Generalized hypergeometric, digamma and trigamma distributions.
\newblock {\em Ann. Inst. Statist. Math.}, 31(3):373--390.

\bibitem[Stoev and Wang, 2019]{stoev19exchangeable}
Stoev, S. and Wang, Y. (2019).
\newblock Exchangeable random partitions from max-infinitely-divisible
  distributions.
\newblock {\em Statist. Probab. Lett.}, 146:50--56.

\bibitem[Vervaat, 1997]{vervaat97random}
Vervaat, W. (1997).
\newblock Random upper semicontinuous functions and extremal processes.
\newblock In {\em Probability and lattices}, volume 110 of {\em CWI Tract},
  pages 1--56. Math. Centrum, Centrum Wisk. Inform., Amsterdam.

\end{thebibliography}

\end{document}